\newcommand{\abs}[1]{\ensuremath{\left| #1 \right|}}
\newcommand{\bm}[1]{\mbox{\boldmath $ {#1} $}}
\renewcommand{\epsilon}{\varepsilon}
\newtheorem{theorem}{Theorem}[section]
\newtheorem{lemma}[theorem]{Lemma}
\newtheorem{proposition}[theorem]{Proposition}
\newtheorem{corollary}[theorem]{Corollary}
\newtheorem{conjecture}[theorem]{Conjecture}
\theoremstyle{definition}
\newtheorem{definition}[theorem]{Definition}
\newtheorem{remark}[theorem]{Remark}
\newtheorem{question}[theorem]{Question}
\numberwithin{equation}{section}
\numberwithin{theorem}{section}
\begin{document}

\title[The ``no justice in the universe'' phenomenon: Why honesty of effort
  may not be rewarded in tournaments] 
      {The ``no justice in the universe'' phenomenon: Why honesty of effort
        may not be rewarded in tournaments}


\author{Peter Hegarty} \address{Department of Mathematical Sciences, 
Chalmers University Of Technology and University of Gothenburg,
41296 Gothenburg, Sweden} \email{hegarty@chalmers.se}

\author{Anders Martinsson} \address{Department of Mathematical Sciences,
Chalmers University Of Technology and University of Gothenburg,
41296 Gothenburg, Sweden} \email{andemar@chalmers.se}

\author{Edvin Wedin} \address{Department of Mathematical Sciences,
Chalmers University Of Technology and University of Gothenburg,
41296 Gothenburg, Sweden} \email{edvinw@chalmers.se}

\subjclass[2000]{05C90, 60C05, 91F99, 05C20.} \keywords{Tournaments, doubly monotonic matrix, symmetry, honesty, fairness, convex polytope.}

\date{\today}

\begin{abstract}
  In 2000 Allen Schwenk, using a well-known mathematical model of matchplay
  tournaments in which the probability of one player beating another in a
  single match is fixed for each pair of players, showed that the classical
  single-elimination, seeded format can be ``unfair'' in the sense that
  situations can arise where an indisputibly better (and thus higher seeded)
  player may have a smaller probability of winning the tournament than a worse
  one. This in turn implies that, if the players are able to influence their
  seeding in some preliminary competition, situations can arise where it is in
  a player's interest to behave ``dishonestly'', by deliberately trying to
  lose a match. This motivated us to ask whether it is possible for a tournament
  to be both honest, meaning that it is impossible for a situation to arise
  where a rational player throws a match, and ``symmetric'' - meaning
  basically that the rules treat everyone the same - yet unfair, in the sense
  that an objectively better player has a smaller probability of winning than
  a worse one. After rigorously
  defining our terms, our main result is that such tournaments exist and we
  construct explicit examples for any number $n \geq 3$ of players. For
  $n=3$, we show (Theorem \ref{thm:threeplayer}) that the collection of
  win-probability vectors for such tournaments form a $5$-vertex
  convex polygon in $\mathbb{R}^3$, minus some boundary points. We conjecture
  a similar result for any
  $n \geq 4$ and prove some partial results towards it. 
\end{abstract}


\maketitle



\section{Introduction}\label{sect:intro}


In their final game of the group phase at the 2006 Olympic ice-hockey
tournament, a surprisingly lethargic Swedish team lost $3-0$ to Slovakia. The
result meant they finished third in their group, when a win would have
guaranteed at worst a second placed finish. As the top four teams in each of
the two groups qualified for the quarter-finals, Sweden remained in the
tournament after this abject performance, but with a lower seeding for the
playoffs. However, everything turned out well in the end as they crushed both
their quarter- and semi-final opponents ($6-2$ against Switzerland and $7-3$
against the Czech Republic respectively), before lifting the gold after a
narrow $3-2$ win over Finland in the final.

The Slovakia match has gained notoriety because of persistent rumours
that Sweden threw the game in order to avoid ending up in the same half of the
playoff draw as Canada and Russia, the two traditional giants of ice-hockey.
Indeed, in an interview in 2011, Peter Forsberg, one of Sweden's top stars,
seemed to admit as much{\footnote{See
    \texttt{www.expressen.se/sport/hockey/tre-kronor/forsberg-slovakien-var-en-laggmatch}}},
though controversy remains about the proper interpretation of his words.
Whatever the truth in this regard, it certainly seems as though Sweden were
better off having lost the game.
\par Instances like this in high-profile sports
tournaments, where a competitor is accused of deliberately losing a game, are
rare and tend to attract a lot of attention when they occur.
This could be considered surprising given that deliberate underperformance
in sport is nothing unusual. For example, quite often a team will decide to
rest their best players or give less than $100 \%$ effort when faced with an
ostensibly weaker opponent, having calculated that the risk in so doing is
outweighed by future potential benefits. Note that this could occur even in a
single-elimination knockout tournament, with a team deciding to trade an
elevated risk of an early exit for higher probability of success later on. Of
course, in such a tournament it can never be in a team's interest to actually
\emph{lose}. However, many tournaments, including Olympic ice-hockey, are
based on the template of two phases, the first being a round-robin event
(everyone meets everyone) which serves to rank the teams, and thereby provide
a seeding for the second, knockout phase{\footnote{In 2006, the Olympic ice
    hockey tournament employed a minor modification of this template. There
    were $12$
    teams. In the first phase, they were divided into two groups of six, each
    group playing round-robin. The top four teams in each group qualified for
    the knockout phase. The latter employed standard seeding (c.f. Figure
    \ref{fig:knockout}), but with the extra condition that teams from the same
    group could not meet in the quarter-finals. This kind of modification of the
    basic two-phase template, where the teams are first divided into smaller
    groups, is very common since it greatly reduces the total number of matches
    that need to be played.} Teams are incentivized to
  perform well in the first phase by $(1)$ often, only higher
  ranking teams qualify for the second phase, and $(2)$ standard seeding (c.f.
  Figure \ref{fig:knockout}) aims to place high ranking teams far apart in the
  game tree, with higher ranking teams closer to lower ranking ones, meaning
  that a high rank generally gives you an easier starting position.
\par
The example of Sweden in 2006 illustrates the following phenomenon of
two-phase tournaments. Since a weaker
team always has a non-zero probability of beating a stronger one in a single
match, a motivation to throw a game in the first phase can arise when it seems
like the ranking of one's potential knockout-phase opponents does not reflect
their actual relative strengths. Sweden's loss to Slovakia meant they faced
Switzerland instead of Canada in the quarter-final and most observers would
probably have agreed that this was an easier matchup, despite Switzerland
having finished second and Canada third in their group (Switzerland also beat
Canada $2-0$ in their group match).

The above phenomenon is easy to understand and begs the fascinating question
of why instances of game-throwing seem to be relatively rare. We don't
explore that (at least partly psycho-social) question further in this paper.
However, \emph{even if} game-throwing is rare, it is still certainly a
weakness of this tournament format that situations can arise where a team is
given the choice between either pretending to be worse than they are, or
playing \emph{honestly} at the cost of possibly decreasing their chances of
winning the tournament.

In a 2000 paper \cite{Sch}, Allan Schwenk studied the question of how to best
seed a knockout tournament from a mathematical point of view. One, perhaps
counter-intuitive, observation made in that paper is that standard seeding
does not necessarily benefit a higher-ranking players, even when the ranking
of its potential opponents \emph{accurately} reflects their relative strengths. 
Consider a matchplay tournament with $n$ competitors, or ``players'' as
we shall
henceforth call them, even though the competitors may be teams. In Schwenk's
mathematical model, the players are numbered $1$ through $n$ and
there are fixed probabilities $p_{ij} \in [0,\,1]$ such
that, whenever
  players $i$ and $j$ meet, the probability that $i$ wins is $p_{ij}$.
  Draws are not allowed, thus $p_{ij} + p_{ji} = 1$. Suppose
  we impose the conditions
  \par (i) $p_{ij} \geq 1/2$ whenever $i < j$,
  \par (ii) $p_{ik} \geq p_{jk}$ whenever $i<j$ and $k\not\in \{i, j\}$.
  \\
  Thus, for any $i < j$, $i$ wins against $j$ with probability at least $1/2$,
  and for any other player $k$, $i$ has at least as high a probability of
  beating $k$ as $j$ does. 
  It then seems unconstestable to
  assert that player $i$ is at least as good as player $j$ whenever
  $i < j$. Indeed, if we imposed strict inequalities in (i) and (ii) we would
  have an unambiguous ranking of the players: $i$ is better than $j$ if and
  only if $i < j$. This is a very natural model to work with.
  It is summarized by a
  so-called \emph{doubly monotonic} $n \times n$
  matrix $M = (p_{ij})$, whose entries equal
  $\frac{1}{2}$ along the main diagonal, are non-decreasing from left to
  right along each row, non-increasing from top to bottom along each
  column and satisfy $p_{ij} + p_{ji} = 1$ for all $i,\,j$. We shall refer to
  the model as the \emph{doubly monotonic model (DMM) of tournaments}. It is
  the model employed throughout the rest of the paper. 
  \par In \cite{Sch}, Schwenk gave an example of an $8 \times 8$ doubly
  monotonic matrix such that, if the standard seeding method (illustrated in
Figure \ref{fig:knockout}) were employed
  for a single-elimination tournament, then player $2$ would have a higher
  probability of winning than player $1$.
\begin{figure*}[ht!]
  \includegraphics[width=.8\textwidth]{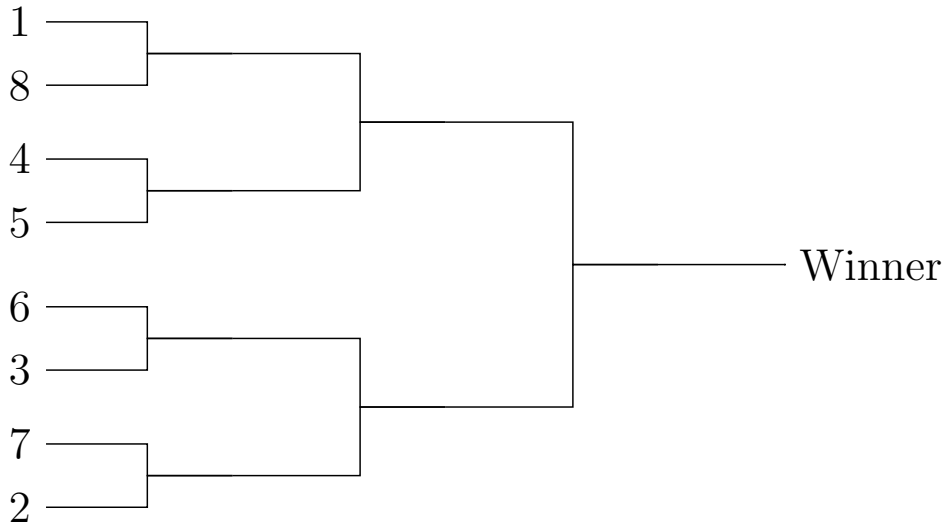} 
  \caption{The standard seeding for a single-elimination knockout tournament
    with $2^3 = 8$ players. In general, if there are $2^n$ players and the
    higher ranked player wins every match then, in the $i$th round,
    $1 \leq i \leq n$, the pairings will be
    $\{j+1,\,2^{n+1-i}-j\}$, $0 \leq j < 2^{n+1-i}$.}
\label{fig:knockout}
\end{figure*}
  As an evident corollary, assuming the
  same mathematical model one can concoct situations in two-phase tournaments
  of the kind considered above in which it is a player's interest to lose a
  game in the first phase even when, say, in every other match played to that
  point, the better team has won.

  Many tournaments consist of only a single phase, either
  round-robin{\footnote{or, more commonly, a \emph{league} format, where each
      pair meet twice.}} or single-elimination. As opposed to the
  aforementioned two-phase format, here it is not hard to see that it can
  never be in a team's interest to lose a game. Indeed, this is
  clear for the single-elimination format, as one loss means you're out of the
  tournament. In the round-robin format, losing one game, all else being
  equal, only decreases your own total score while increasing the
  score of some other team. As Schwenk showed,
  the single-elimination option, with standard seeding, may still not be fair,
  in the sense of always giving a higher winning probability to a better
  player. The obvious way around this is to randomize the draw. Schwenk
  proposed a method called
  \emph{Cohort Randomized Seeding}{\footnote{It is easy to see that the
      standard method cannot result in a player from a lower cohort, as that
      term is defined by Schwenk, having a higher probability of winning the
      tournament than one in a higher cohort.}}, which seeks to respect the
  economic incentives
  behind the standard method{\footnote{The standard format
      ensures the romance of
  ``David vs. Goliath'' matchups in the early rounds, plus the likelihood of
  the later rounds featuring contests between the top stars, when public
  interest is at its highest. Schenk used the
      term \emph{delayed confrontation} for the desire to keep the top ranked
      players apart in the early rounds.}} while introducing just enough
  randomization to ensure that this basic criterion for fairness is satisfied.
  According to Schwenk himself, in email correspondence with us, no major
  sports competition has yet adopted his proposal{\footnote{On the other hand, uniformly random draws are commonly employed. An example is the English FA Cup, from the round-of-$64$ onwards.}}.

Even tournaments where it is never beneficial to lose a match often include another source
  of unfairness, in that players may face quite different schedules, for
  reasons of geography, tradition and so on. For example, qualifying for the
  soccer World Cup is organized by continent, an arrangement that effectively
  punishes European teams. The host nation automatically qualifies
  for the finals
  and is given a top seeding in the group phase, thus giving it an unfair
  advantage over everyone else. In the spirit of fair competition, one would
  ideally wish for a tournament not to give certain players any special
  treatment from the outset, and only break this symmetry after seeing how the
  teams perform within the confines of the tournament. Note that a
  single-elimination tournament with standard seeding is an example of such
  ``asymmetric scheduling'', unless the previous performances upon which the
  seeding is founded are considered part of the
  tournament.  
  \\ \par The above considerations lead us to the question on which this paper
  is based. Suppose the rules of a tournament ensure both
  \par - \emph{honesty}, meaning it is impossible for a situation to arise
  where it is in a player's interest to lose a game, and
  \par - \emph{symmetry}, meaning that the rules treat all the players equally.
  In particular, the rules should not depend on the identity of the players, or the order in which they entered the tournament.  \\
  \par Must it follow that the tournament is \emph{fair}, in the sense that a better
  player always has at least as high a probability of winning the tournament as
  a worse one? Having defined our terms precisely we will show below that the
  answer, perhaps surprisingly, is no. Already for three players, we will
  provide simple examples of tournaments which are symmetric and honest, but
  not fair. The question of ``how unfair'' a symmetric and honest tournament
  can be seems to be non-trivial for any $n\geq 3$ number of players.
  For $n=3$ we solve this problem exactly, and for $n\geq 4$ we formulate a
  general conjecture. The rest of the paper is organized as follows: 
  
  \begin{itemize}
    \item
  Section \ref{sect:defi} provides rigorous definitions. We will define what
  we mean by a
    \emph{(matchplay) tournament} and what it
    means for a tournament to be either \emph{symmetric}, \emph{honest} or
    \emph{fair}. The DMM is assumed throughout.
  \item
    Sections
    \ref{sect:threeplayer} and \ref{sect:nplayer} are the heart of the
    paper. In the former, we consider
    $3$-player tournaments and describe what appear to be the simplest possible
    examples of tournaments which are symmetric and honest, but not fair.
    Theorem \ref{thm:threeplayer} gives a precise characterization of
    those
    probability vectors $(x_1, \, x_2, \, x_3) \in \mathbb{R}^3$ which can arise
    as the vectors of win-probabilities for the players in a symmetric and
    honest tournament{\footnote{These results may remind some readers of the
        notion of a \emph{truel} and of the known fact that, in a truel, being
        a better shot does not guarantee a higher probability of winning (that
        is, of surviving). See \texttt{https://en.wikipedia.org/wiki/Truel}.
        Despite the analogy, we're not aware of any deeper connection between
        our results and those for truels, nor between their respective
        generalizations to more than three ``players''.}}; here
    fairness would mean $x_1 \geq x_2 \geq x_3$. 
  \item
    In Section \ref{sect:nplayer} we extend these ideas to a general
    method for constructing symmetric, honest and unfair $n$-player
    tournaments. We introduce a family of $n$-vertex digraphs
    and an associated convex polytope $\mathcal{A}^{*}_{n}$
    of probability vectors
    in $\mathbb{R}^n$ and show that every
    interior point of this polytope arises as the vector of
    win-probabilities of some symmetric
    and honest $n$-player tournament. The polytope $\mathcal{A}^{*}_{n}$
    includes all probability vectors satisfying
    $x_1 \geq x_2 \geq \dots \geq x_n$, but is shown to have a total of
    $\frac{3^{n-1} + 1}{2}$ corners, thus yielding a plethora
    of examples of symmetric and honest, but unfair tournaments. Indeed, we
    conjecture (Conjecture \ref{conj:nplayer}) that the
    vector of win-probabilities of \emph{any} symmetric and honest
    $n$-player tournament lies in $\mathcal{A}^{*}_{n}$.
  \item
    Section \ref{sect:frugal} considers the notion of a \emph{frugal}
    tournament, namely one which always begins by picking one player uniformly
    at random to take no further part in it (though he may still win).
    The tournaments constructed in Sections
    \ref{sect:threeplayer} and \ref{sect:nplayer} have this
    property, and the main result of Section \ref{sect:frugal} is, in
    essence, that frugal tournaments provide no counterexamples to
    Conjecture \ref{conj:nplayer}.
  \item
    Section \ref{sect:maps} introduces the notion of a \emph{tournament map}, which is a natural way to view tournaments as continuous functions. We describe its relation to the regular tournament concept. Using this, we show (Corollary \ref{cor:andersnormalform}) that any symmetric and honest tournament can be approximated arbitrarily well by one of the form described in the section. We further provide three applications. \par
    - The first is to \emph{strictly} honest tournaments, which means, informally, that a player should always be strictly better off in winning a match than in losing it. We show that any symmetric and honest tournament can be approximated arbitrarily well by a strictly honest one.
  \par
    - The second application is to \emph{tournaments with rounds}. For simplicity, we assume in the rest of the article that matches in a tournament are played one-at-a-time, something which is often not true in reality. Extending the notion of
    honesty to tournaments with rounds provides some technical challenges, which are discussed here.
  \par
    - The final application is to
    prove that the possible vectors of win-probablities for symmetric and
    honest $n$-player tournaments form a finite union of convex polytopes
    in $\mathbb{R}^n$, minus some boundary points.
    This provides, in particular, 
    some further evidence in support of Conjecture \ref{conj:nplayer}.
    \item
    In Section \ref{sect:futile}, we consider the 
    concept of a \emph{futile}
    tournament, one in which a player's probability of
    finally winning is never affected by whether they win or lose a given
    match. We prove that, in a symmetric and futile $n$-player
    tournament, everyone has probability $1/n$ of winning. This is exactly as
    one would expect, but it doesn't seem to be a completely trivial task to
    prove it.
  \item
    Finally, 
    Section \ref{sect:final} casts a critical eye on the various
    concepts introduced in the paper, and mentions some further possibilities
    for future work.
    \end{itemize}
      
    \section{Formal Definitions}\label{sect:defi}
    
    The word \emph{tournament} has many different meanings. In graph theory, it
    refers to a directed graph where, for every pair of vertices $i$ and $j$,
    there is an arc going either from $i$ to $j$ or from $j$ to $i$. In more
    common language, a \emph{matchplay} refers to a competition
    between a (usually
    relatively large) number of competitors/players/teams in which a winner is
    determined depending on the outcome of a number of individual matches, each
    match involving exactly two competitors. We concern ourselves
    exclusively with matchplay tournaments{\footnote{Athletics, golf, cycling,
        skiing etc. are examples of sports in which competitions traditionally
        take a different form, basically
        ``all-against-all''.}}. 
    Even with this restriction, the word ``tournament'' itself can be used to
    refer to: a \emph{reoccurring
      competition} with a fixed name and fixed format, such as the Wimbledon
    Lawn Tennis Championships, a \emph{specific instance} of a (potentially
    reoccurring)
    competition, such as the 2014 Fifa World Cup, or a
    specific \emph{set of rules} by which such a competition is structured,
    such as ``single-elimination knock-out with randomized seeding'',
    ``single round-robin with randomized scheduling'', 
    etc. We will here use tournament in this last sense.

    More precisely, we consider an \emph{$n$-player tournament} as a set of
    rules for how to arrange matches between $n$ \emph{players}, represented
    by numbers from $1$ to $n$. The decision on which players should meet each
    other in the next match may depend on the results from earlier matches as
    well as additional randomness (coin flips etc.). Eventually, the
    tournament should announce one of the players as the winner. We assume that:
\begin{enumerate}
\item A match is played between an (unordered) pair of players $\{i, j\}$. The
  outcome of said match can either be \emph{$i$ won}, or \emph{$j$ won}. In
  particular, no draws are allowed, and no more information is given back to
  the tournament regarding e.g. how close the match was, number of goals scored
  etc.
\item Matches are played sequentially one-at-a-time. In practice, many
  tournaments consist of ``rounds'' of simultaneous matches. We'll make some
  further remarks on this restriction in Subsection \ref{subsect:rounds}.
\item There is a bound on the number of matches that can be played in a
  specific tournament. So, for example, for three players we would not allow
  ``iteration of round-robin until someone beats the other two''. Instead,
  we'd require the tournament to break a potential three-way tie at some
  point, e.g. by randomly selecting a winner.
\end{enumerate}
Formally, we may think of a tournament as a randomized algorithm which is
given access to a function \texttt{PlayMatch} that takes as input an unordered
pair of numbers between $1$ and $n$ and returns one of the numbers.

In order to analyze our tournaments, we will need a way to model the outcomes
of individual matches. As mentioned in the introduction, we will here
employ the same simple model as Schwenk \cite{Sch}. For each pair of players
$i$ and $j$, we assume that there is some unchanging probability $p_{ij}$ that
$i$ wins in a match between them. Thus, $p_{ij} + p_{ji} = 1$ by (1) above.
We set $p_{ii} = \frac{1}{2}$ and denote the set of all 
possible $n \times n$ matrices by
$$\mathcal{M}_n = \{P\in [0, 1]^{n\times n} : P+P^T=\mathbf{1}  \},$$
where $\mathbf{1}$ denotes the all ones matrix. We say that
$P=(p_{ij})_{i,j\in[n]}\in\mathcal{M}_n$ is \emph{doubly monotonic} if $p_{ij}$ is
decreasing in $i$ and increasing in $j$. We denote
$$\mathcal{D}_n = \{P \in \mathcal{M}_n : P\text{ is doubly monotonic}\}.$$

We will refer to a pair $\bm{\mathcal{T}}=(\bm{T}, P)$ consisting
of an $n$-player
tournament $\bm{T}$ and a matrix $P\in\mathcal{M}_n$ as a \emph{specialization of
  $\bm{T}$}. Note that any such specialization defines a random process where
alternatingly two players are chosen according to $\bm{T}$ to play a match,
and the winner of the match is chosen according to $P$. For a given
specialization $\bm{\mathcal{T}}$ of a tournament, we let $\pi_k$ denote the
probability for player $k$ to win the tournament, and define the \emph{win
  vector} $\bm{wv}(\bm{\mathcal{T}})=(\pi_1, \dots, \pi_n)$. For a fixed
tournament $\bm{T}$ it will sometimes be useful to consider these
probabilities as functions of the matrix $P$, and we will hence write
$\pi_k(P)$ and
$\bm{wv}(P)$ to denote the corresponding probabilities in the specialization
$(\bm{T}, P)$

We are now ready to formally define the notions
    of symmetry, honesty and fairness.
    \\
    \\
      {\sc Symmetry:} Let $\bm{T}$ be an $n$-player tournament. For any
      permutation $\sigma\in\mathcal{S}_n$ and any $P\in \mathcal{M}_n$, we
      define $Q=(q_{ij})\in\mathcal{M}_n$ by $q_{\sigma(i)\sigma(j)} = p_{ij}$ for
      all $i, j \in [n]$. That is, $Q$ is the matrix one obtains from $P$
      after renaming each player $i\mapsto \sigma(i)$. We say that $\bm{T}$
      is \emph{symmetric} if, for any
      $P\in \mathcal{M}_n$, $\sigma\in\mathcal{S}_n$ and any $i\in[n]$, we have
      $\pi_i(P)=\pi_{\sigma(i)}(Q).$
      
      This definition is meant to capture the intuition that the rules
      ``are the same for everyone''. Note that any tournament can be turned
      into a symmetric one by first randomizing the order of the players.
      \\
      \\
        {\sc Honesty:} Suppose that a tournament $\bm{T}$ is in a state where
        $r\geq 0$ matches have already been played, and it just announced a
        pair of players $\{i, j\}$ to meet in match $r+1$. Let $\pi_i^+(P)$
        denote the probability that $i$ wins the tournament conditioned on the
        current state and on $i$ being the winner of match
        $r+1$, assuming the outcome of any subsequent match is decided according
        to $P\in\mathcal{M}_n$. Similarly, let $\pi_i^-(P)$ denote the
        probability that $i$ wins the tournament given that $i$ is the loser
        of match $r+1$. We say that $\bm{T}$ is \emph{honest} if, for any
        possible such state of $\bm{T}$ and any $P\in\mathcal{M}_n$, we have
        $\pi_i^+(P)\geq \pi_i^-(P)$.
        \par
        The tournament is said to be \emph{strictly honest} if in addition,
        for all $P\in\mathcal{M}^{o}_{n}$, the above inequality is strict, and
        all pairs of players have a positive probability to meet at least once
        during the tournament. Here $\mathcal{M}^{o}_{n}$ denotes the set of
        matrices $(p_{ij}) \in \mathcal{M}_n$ such that
        $p_{ij} \not\in \{0,\,1\}$. It makes sense to exclude these
        boundary elements
        since, if $p_{ij} = 0$ for every $j \neq i$, then player
        $i$ cannot affect his destiny at all. For instance, it seems natural
        to consider a single-elimination tournament as strictly honest, but
        in order for winning to be strictly better than losing, each player
        must retain a positive probability of winning the tournament whenever
        he wins a match.
        \par To summarize, in an honest tournament a player can never be put in
        a strictly better-off position by throwing a game. In a strictly
        honest tournament, a player who throws a game is always put in a
        strictly worse-off position.
        
        \begin{remark}\label{rem:state}
          We note that the ``state of a
          tournament'' may contain more information than what the players can
          deduce from the matches played so far. For instance, the two-player
          tournament that plays one match and chooses the winner with
          probability
          $0.9$ and the loser with probability $0.1$ is honest if the decision
          of whether to choose the winner or loser is made \emph{after} the
          match. However, if the decision is made \emph{beforehand}, then with
          probability $0.1$ we would have $\pi_1^+=\pi_2^+=0$
          and $\pi_1^-=\pi_2^-=1$. Hence, in this case the tournament is not
          honest.
          \end{remark} $\;$ \\
        {\sc Fairness:} Let $\bm{T}$ be an $n$-player tournament. We say
          that $\bm{T}$ is \emph{fair} if
          $\pi_1(P)\geq \pi_2(P) \geq \dots \geq \pi_n(P)$ for all
          $P\in\mathcal{D}_n$.
\\
\par The main purpose of the next two sections is to show that there exist
symmetric and honest tournaments which are nevertheless unfair.
    
    \section{Three-player tournaments}\label{sect:threeplayer}

It is easy, though non-trivial, to show that every $2$-player symmetric
    and honest tournament is fair - see Proposition \ref{prop:twoplayer} below.
    Already for three players, this breaks down however. Let $N \geq 2$ and
    consider the following two tournaments:
    \\
    \\
      {\sc Tournament $\bm{T}_1 = \bm{T}_{1, N}$}: The rules are as follows:
      \par \emph{Step 1:} Choose one of the three players uniformly at random.
      Let $i$ denote the chosen player and $j, \, k$ denote the remaining
      players.
      \par
      \emph{Step 2:} Let $j$ and $k$ play $N$ matches.
      \par - If one of them, let's say $j$, wins at least $\frac{3N}{4}$
      matches, then the winner of the tournament is chosen by tossing a fair
      coin between $j$ and $i$.
      \par - Otherwise, the winner of the tournament is chosen by tossing a
      fair coin between $j$ and $k$.
      \\
      \\
      {\sc Tournament $\bm{T}_2 = \bm{T}_{2,\,N}$}: The rules are as follows:
      \par \emph{Step 1:} Choose one of the three players uniformly at random.
      Let $i$ denote the chosen player and $j, \, k$ denote the remaining
      players.
      \par
      \emph{Step 2:} Let $j$ and $k$ play $N$ matches.
      \par - If one of them wins at least $\frac{3N}{4}$ matches, then he is
      declared the winner of the tournament.
      \par - Otherwise, $i$ is declared the winner of the tournament.
      \\
      \par It is easy to see that both $\bm{T}_1$ and $\bm{T}_2$ are symmetric
      and honest (though not strictly honest), for any $N$. Now let
      $p_{12} = p_{23} = \frac{1}{2}$ and $p_{13} = 1$, so that the matrix
      $P = (p_{ij})$ is doubly monotonic, and let's analyze the
      corresponding specializations $\bm{\mathcal{T}}_1$, $\bm{\mathcal{T}}_2$
      of
      each tournament as $N \rightarrow \infty$.
      \par \emph{Case 1:} Player $1$ is chosen in Step 1. In Step 2,
      by the law of large numbers, neither
      $2$ nor $3$ will win at least $\frac{3N}{4}$ matches, asymptotically
      almost surely (a.a.s.). Hence, each of $2$ and $3$ wins
      $\bm{\mathcal{T}}_1$ with probability tending to $\frac{1}{2}$, while
      $1$ a.a.s. wins $\bm{\mathcal{T}}_2$.
      \par \emph{Case 2:} Player $2$ is chosen in Step 1. In Step 2, player
      $1$ will win all $N$ matches. Hence, each of $1$ and $2$ wins
      $\bm{\mathcal{T}}_1$ with probability $\frac{1}{2}$, while $1$ wins
      $\bm{\mathcal{T}}_2$.
      \par
      \emph{Case 3:} Player $3$ is chosen in Step 1. In Step 2, neither $1$
      nor $2$ will win at least $\frac{3N}{4}$ matches, a.a.s.. Hence, each of
      $1$ and $2$ wins $\bm{\mathcal{T}}_1$ with probability tending to
      $\frac{1}{2}$, while $3$ a.a.s. wins $\bm{\mathcal{T}}_2$.
      \par Hence, as $N \rightarrow \infty$, we find that
      \begin{equation}\label{eq:unfairnodes}
        \bm{wv}(\bm{\mathcal{T}_1}) \rightarrow \left( \frac{1}{3}, \, \frac{1}{2}, \, \frac{1}{6} \right) \;\;\; {\hbox{and}} \;\;\; \bm{wv}(\bm{\mathcal{T}_2}) \rightarrow \left( \frac{2}{3}, \, 0, \, \frac{1}{3} \right).
        \end{equation}
      Indeed, we get unfair specializations already for $N = 2$, in which case
      the dichotomy in Step 2 is simply whether or not a player wins both
      matches. One may check that, for $N = 2$,
      \begin{equation*}\label{eq:unfairex}
        \bm{wv}(\bm{\mathcal{T}_1}) = \left( \frac{3}{8}, \, \frac{5}{12}, \, \frac{5}{24} \right) \;\;\; {\hbox{and}} \;\;\; \bm{wv}(\bm{\mathcal{T}_2}) = \left( \frac{7}{12}, \, \frac{1}{6}, \, \frac{1}{4} \right).
      \end{equation*}
      
      We can think of  $\bm{\mathcal{T}}_1$ as trying to give an advantage to
      player $2$ over player $1$, and $\bm{\mathcal{T}}_2$ trying to give an
      advantage to player $3$ over player $2$. It is natural to ask if it is
      possible to improve the tournaments in this regard. Indeed the difference
      in winning probabilities for players $1$ and $2$ in $\bm{\mathcal{T}}_1$
      is only $\frac12-\frac13=\frac16$, and similarly the winning
      probabilities for
      players $3$ and $2$ in $\bm{\mathcal{T}}_2$
      only differ by $\frac13$. In particular, is it
      possible to modify  $\bm{\mathcal{T}}_1$ such that $\pi_1$ goes below
      $\frac13$ or such that $\pi_2$ goes above $\frac12$? Is it possible to
      modify $\bm{\mathcal{T}}_2$ such that $\pi_3$ goes above $\frac13$? The
      answer to both of these questions turns out to be ``no'', as we
      will show below. In fact, these two tournaments are, in a sense, the
      two unique maximally unfair symmetric and honest $3$-player tournaments.

      We begin with two lemmas central to the study of symmetric and honest
      tournaments for an arbitrary number of players.
        \begin{lemma}\label{lem:symmetry}
          Let $\bm{T}$ be a symmetric $n$-player tournament. If $p_{ik} = p_{jk}$
          for all $k = 1,\,\dots,\,n$, then $\pi_i = \pi_j$.
\end{lemma}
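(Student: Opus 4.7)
The plan is to exploit the definition of symmetry directly, applied to the transposition $\sigma = (i\,j) \in \mathcal{S}_n$. The idea is that the hypothesis $p_{ik} = p_{jk}$ for all $k$ says that players $i$ and $j$ have identical ``ability profiles'', so swapping their names should not change the matrix $P$ at all; the symmetry axiom then forces their winning probabilities to coincide.

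Concretely, I would first observe that applying the hypothesis with $k=j$ gives $p_{ij} = p_{jj} = \tfrac12$, and hence also $p_{ji} = \tfrac12$. Together with the assumption that $p_{ik} = p_{jk}$ for all $k$, this means the $i$-th and $j$-th rows of $P$ are identical; the relation $p_{k\ell} + p_{\ell k} = 1$ then forces the $i$-th and $j$-th columns to be identical as well.

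Next, let $Q = (q_{ab})$ be the matrix obtained from $P$ via $\sigma$, i.e., $q_{\sigma(a)\sigma(b)} = p_{ab}$. A routine case check — splitting into (a) $\{a,b\} \cap \{i,j\} = \emptyset$, (b) exactly one of $a,b$ lies in $\{i,j\}$, and (c) $\{a,b\} = \{i,j\}$ — combined with the row/column equalities established above, shows $q_{ab} = p_{ab}$ in every case, so $Q = P$. Applying the symmetry axiom to $\bm{T}$ with this $\sigma$ and this $P$ then yields
\[
\pi_i(P) \;=\; \pi_{\sigma(i)}(Q) \;=\; \pi_j(Q) \;=\; \pi_j(P),
\]
which is the desired conclusion. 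I expect no real obstacle here: the only mild subtlety is making sure that the hypothesis, although formally about rows, also forces the relevant column equalities (and in particular $p_{ij} = \tfrac12$), so that the renamed matrix $Q$ really does coincide with $P$ and not merely agree with it on some entries.
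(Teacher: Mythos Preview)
Your proof is correct and follows exactly the same approach as the paper: apply the symmetry axiom with $\sigma$ equal to the transposition $(i\,j)$ and observe that the hypothesis forces $Q = P$. You have simply spelled out in more detail the verification that $Q = P$, which the paper leaves implicit.
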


\begin{proof}
  Follows immediately from the definition of symmetry by taking $\sigma$
  to be the permutation that swaps $i$ and $j$.
\end{proof}

\begin{lemma}\label{lem:honesty}
  Let $\bm{T}$ be an honest $n$-player tournament and let
  $P=(p_{ij})_{i,j\in[n]} \in\mathcal{M}_n$. Then, for any
  $k \neq l$, $\pi_k=\pi_k(P)$ is increasing in $p_{kl}$.
  \end{lemma}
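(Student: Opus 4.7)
The plan is to induct on $M$, the maximum number of matches between players $k$ and $l$ that can be played in any execution of $\bm{T}$; this quantity is finite by assumption (3) in Section \ref{sect:defi}. I would fix every entry of $P$ other than $p_{kl}$ (and the enforced $p_{lk} = 1 - p_{kl}$) and view $\pi_k$ as a function of the single parameter $p_{kl} \in [0,1]$. The base case $M = 0$ is immediate: $p_{kl}$ never enters any probability calculation, so $\pi_k$ is constant in $p_{kl}$.

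For $M \geq 1$, I would condition on the first random time $\tau$ (possibly $+\infty$) at which $k$ and $l$ are scheduled to play each other, together with the full history $h$ of internal randomness and of match outcomes up to but not including time $\tau$. Since no $k$-$l$ match has been played before $\tau$, neither the distribution of $h$ nor the probability of $\{\tau = \infty\}$ depends on $p_{kl}$; moreover, $p_{kl}$ plays no role on $\{\tau = \infty\}$. On $\{\tau < \infty\}$, conditional on $h$, the probability that $k$ wins the tournament equals
\[
p_{kl}\, \pi_k^+(h; p_{kl}) + (1 - p_{kl})\, \pi_k^-(h; p_{kl}),
\]
where $\pi_k^\pm(h; p_{kl})$ denotes the probability that $k$ ultimately wins given history $h$ followed by $k$ winning/losing match $\tau$. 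The continuation of $\bm{T}$ after each of these two outcomes is itself an honest tournament (any state it can reach is reachable in $\bm{T}$, so the honesty inequality is inherited) in which at most $M - 1$ further $k$-$l$ matches can occur. By the inductive hypothesis, $\pi_k^+(h; \cdot)$ and $\pi_k^-(h; \cdot)$ are non-decreasing in $p_{kl}$, and by honesty of $\bm{T}$ applied at the state immediately before match $\tau$ we also have $\pi_k^+(h; p_{kl}) \geq \pi_k^-(h; p_{kl})$.

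The proof then concludes with the elementary observation that if $a, b : [0, 1] \to [0, 1]$ are non-decreasing with $a(p) \geq b(p)$, then $f(p) = p\, a(p) + (1 - p)\, b(p)$ is non-decreasing; this follows from the identity $f(p_2) - f(p_1) = p_2 (a(p_2) - a(p_1)) + (1 - p_2)(b(p_2) - b(p_1)) + (p_2 - p_1)(a(p_1) - b(p_1)) \geq 0$ for $p_1 \leq p_2$. Averaging this monotone quantity over $h$, whose law does not depend on $p_{kl}$, and adding the $p_{kl}$-independent $\{\tau = \infty\}$ contribution completes the inductive step. The main obstacle is a bookkeeping issue concerning ``state'': one must choose the conditioning (on the entire history of internal random bits and match outcomes, rather than on the externally visible algorithmic state discussed in Remark \ref{rem:state}) so that both the pre-$\tau$ distribution is genuinely independent of $p_{kl}$ and the post-$\tau$ continuation is once again a bounded, honest sub-tournament to which the inductive hypothesis applies.
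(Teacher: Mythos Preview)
Your proof is correct. The paper takes a different route: rather than inducting on the number of $k$--$l$ matches, it fixes two matrices $P$ and $P'$ differing only in that $p'_{kl}=p_{kl}+\delta$, and interpolates between the specializations $(\bm{T},P)$ and $(\bm{T},P')$ via hybrids $\bm{\mathcal{T}}^r$ in which the first $r$ matches use $P'$ and the rest use $P$. A single coupling argument then shows $\pi_k^{r+1}\ge\pi_k^r$: run both hybrids identically up to match $r+1$; they can differ only if that match is between $k$ and $l$, and then the gap is $(p'_{kl}-p_{kl})(\pi_k^+-\pi_k^-)\ge 0$ by honesty. Telescoping over $r$ finishes.

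Both arguments isolate the same mechanism, namely that honesty at each $k$--$l$ match forces the right sign. The paper's interpolation avoids your elementary lemma about $p\,a(p)+(1-p)\,b(p)$ by changing $p_{kl}$ at only one match at a time, so $\pi_k^\pm$ are the \emph{same} in the two hybrids being compared. Your induction, by contrast, treats $\pi_k^\pm$ as genuinely $p_{kl}$-dependent and handles that dependence via the inductive hypothesis; this yields monotonicity of $\pi_k$ as a function of $p_{kl}$ in one shot, rather than as a comparison of two fixed values. The paper's version also extends with no change to tournaments without a bound on the number of matches (their Remark~\ref{rem:unboundedcouple}(i)), whereas your induction uses the bound $M$ essentially.
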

As the proof of this lemma is a bit technical, we will delay this until the
end of the section.

In applying Lemma \ref{lem:honesty},
it is useful to introduce some terminology. We will use the terms
\emph{buff} and \emph{nerf} to refer to the act of increasing, 
respectively decreasing, one player's match-winning probabilities while leaving
the probabilities between any other pair of players constant{\footnote{These
    terms will be familiar to computer gamers.}}.

\begin{proposition}\label{prop:upperbd}
  Let $n\geq 2$ and let $\bm{T}$ be a symmetric and honest $n$-player
  tournament. For any $P\in\mathcal{D}_n$ and any $i>1$ we have
  $\pi_i(P)\leq\frac12$.
\end{proposition}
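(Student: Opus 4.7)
The plan is to use the two preceding lemmas in tandem via a comparison with player $1$. Given $i > 1$ and $P \in \mathcal{D}_n$, I will modify $P$ to a matrix $P'$ in which players $1$ and $i$ are indistinguishable to the tournament, while using double monotonicity to guarantee that the modification only \emph{increases} $\pi_i$. Once players $1$ and $i$ are indistinguishable, Lemma \ref{lem:symmetry} forces $\pi_1(P') = \pi_i(P')$, and then the trivial bound $\pi_1(P') + \pi_i(P') \leq 1$ immediately gives $\pi_i(P') \leq 1/2$.

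Concretely, I will define $P' \in \mathcal{M}_n$ by setting $p'_{ik} := p_{1k}$ for every $k \notin \{1,i\}$, setting $p'_{1i} = p'_{i1} := 1/2$, defining $p'_{ki} := 1 - p'_{ik}$ to keep $P' + (P')^T = \mathbf{1}$, and leaving every other entry of $P$ unchanged. Double monotonicity of $P$ gives $p_{1k} \geq p_{ik}$ for $k \notin \{1,i\}$, and also $p_{i1} = 1 - p_{1i} \leq 1/2$. Hence each entry of row $i$ of $P'$ is weakly larger than the corresponding entry of $P$, so the passage from $P$ to $P'$ is a finite sequence of buffs of player $i$ (changing one coordinate $p_{ik}$ at a time, and simultaneously updating $p_{ki}$ to remain in $\mathcal{M}_n$). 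Applying Lemma \ref{lem:honesty} along this sequence yields
\[
\pi_i(P) \;\leq\; \pi_i(P').
\]

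To finish, I observe that in $P'$ the rows indexed by $1$ and $i$ agree at every column: for $k \notin \{1,i\}$ both equal $p_{1k}$ by construction, while for $k \in \{1,i\}$ both equal $1/2$ (either the diagonal entry or the newly set $1/2$). Lemma \ref{lem:symmetry}, applied with $\sigma$ the transposition swapping $1$ and $i$, then gives $\pi_1(P') = \pi_i(P')$. Since the win probabilities are nonnegative and sum to $1$, this forces $\pi_i(P') \leq 1/2$, and combining with the previous display proves $\pi_i(P) \leq 1/2$.

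There is no genuine obstacle in this argument; the only point that deserves care is checking that each single-entry step in the interpolation $P \to P'$ stays inside $\mathcal{M}_n$ so that Lemma \ref{lem:honesty} is applicable, which is automatic once we update $p_{ik}$ and $p_{ki}$ in tandem. The conceptual content of the proof is exactly that the hypothesis of double monotonicity on $P$ is precisely what makes ``replace player $i$'s row by player $1$'s row, then symmetrize the head-to-head entry'' a buff of player $i$.
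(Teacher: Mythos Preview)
Your proof is correct and follows essentially the same approach as the paper's own proof: buff player $i$ to be identical to player $1$, apply Lemma~\ref{lem:honesty} to get $\pi_i(P)\le\pi_i(P')$, apply Lemma~\ref{lem:symmetry} to get $\pi_1(P')=\pi_i(P')$, and conclude via $\pi_1(P')+\pi_i(P')\le 1$. Your write-up is simply more explicit about the interpolation staying in $\mathcal{M}_n$ and about verifying that double monotonicity makes every coordinate change a genuine buff.
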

\begin{proof}
  Given $P\in\mathcal{D}_n$, we modify this to the matrix $P'$ by buffing player
  $i$ to be equal to player $1$, that is, we put $p'_{i1}=\frac{1}{2}$ and for
  any $j\not\in \{1,\,i\}$, $p'_{ij}=p_{1j}$. By Lemma \ref{lem:honesty},
  $\pi_i(P)\leq \pi_i(P')$. But by Lemma \ref{lem:symmetry},
  $\pi_1(P') = \pi_i(P')$. As the winning probabilities over all players
  should sum to $1$, this means that $\pi_i(P')$ can be at most $\frac{1}{2}$.
\end{proof}

\begin{proposition}\label{prop:twoplayer} Every symmetric and honest
  $2$-player tournament is fair. Moreover, for any $p\in [\frac12, 1]$, there
  is a specialization of an honest and symmetric $2$-player tournament where
  $\pi_1=p$ and $\pi_2=1-p$.
\end{proposition}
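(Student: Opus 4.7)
The proposition has two parts: fairness of any symmetric and honest $2$-player tournament, and attainability of every win vector $(p, 1-p)$ with $p \in [\frac{1}{2}, 1]$.

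For the first part, my plan is to observe that fairness for $n=2$ reduces to the single inequality $\pi_1(P) \geq \pi_2(P)$ for every $P \in \mathcal{D}_2$, and that this is essentially an immediate corollary of Proposition \ref{prop:upperbd}. Indeed, applying that proposition with $i = 2$ gives $\pi_2(P) \leq \frac{1}{2}$. Since $\pi_1(P) + \pi_2(P) = 1$ (the tournament must always produce a winner), we get $\pi_1(P) \geq \frac{1}{2} \geq \pi_2(P)$, which is fairness. So this direction requires no real work beyond citing the previous proposition.

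For the second part, my plan is to exhibit the simplest possible $2$-player tournament: play a single match between players $1$ and $2$, and declare the winner of the match to be the winner of the tournament. Call this tournament $\bm{T}$. Symmetry is immediate since the tournament treats both players identically (the match-pair $\{1,2\}$ is invariant under the swap $\sigma = (1\;2)$). Honesty is also immediate: conditioned on the pair $\{1,2\}$ being announced, we have $\pi_i^+(P) = 1 \geq 0 = \pi_i^-(P)$ for each $i \in \{1,2\}$. Finally, given any $p \in [\frac{1}{2}, 1]$, take $P \in \mathcal{M}_2$ with $p_{12} = p$ and $p_{21} = 1-p$; then in the specialization $(\bm{T}, P)$ we have $\pi_1 = p_{12} = p$ and $\pi_2 = 1-p$, as required.

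There is essentially no obstacle here: the first part is a one-line corollary of Proposition \ref{prop:upperbd}, and the second part is witnessed by the trivial ``one match decides everything'' tournament. The only care needed is to verify that the trivial tournament genuinely satisfies the formal definitions of symmetry and honesty from Section \ref{sect:defi}, but both checks are direct from the definitions.
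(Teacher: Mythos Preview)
Your proposal is correct and follows essentially the same approach as the paper: both parts are handled exactly as the paper does, invoking Proposition \ref{prop:upperbd} for fairness and the single-match tournament for attainability. Your version is slightly more explicit in checking symmetry and honesty of the single-match tournament, but the argument is identical.
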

\begin{proof}
  By Proposition \ref{prop:upperbd}, any doubly monotonic specialization of such
  tournament satisfies $\pi_2\leq\frac12$ and thereby $\pi_1\geq\pi_2$. On the
  other hand, for any $p \in \left[ \frac{1}{2}, \, 1 \right]$, if $p_{12} = p$
  and the tournament consists of a single match, then $\pi_1 = p$.
\end{proof}

\begin{proposition}\label{prop:threeplayer} Let $\bm{T}$ be a symmetric and
  honest $3$-player tournament. Then, for any $P\in\mathcal{D}_3$,
  $\pi_1\geq\frac13$, $\pi_2\leq\frac12$ and $\pi_3\leq\frac13$.
\end{proposition}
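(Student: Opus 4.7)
My plan is to prove each of the three inequalities by modifying $P\in\mathcal{D}_3$ via a buff/nerf procedure that equates the match-winning probabilities of two (or all three) players; the ensuing equality of win-probabilities supplied by Lemma \ref{lem:symmetry}, combined with the monotonicity in Lemma \ref{lem:honesty} and the identity $\pi_1+\pi_2+\pi_3=1$, then squeezes out the desired bound. The inequality $\pi_2\leq\tfrac12$ is precisely the case $n=3$, $i=2$ of Proposition \ref{prop:upperbd}, so the work lies in the other two bounds.

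For $\pi_3(P)\leq\tfrac13$ I argue in two stages. First, buff player $3$ to duplicate player $2$: define $Q\in\mathcal{M}_3$ from $P$ by setting $q_{31}=p_{21}$ and $q_{32}=\tfrac12$ (leaving all other entries of $P$ alone). A direct check shows $Q\in\mathcal{D}_3$. Since only the entries $p_{3\ell}$ have been weakly increased, Lemma \ref{lem:honesty} yields $\pi_3(P)\leq\pi_3(Q)$, and Lemma \ref{lem:symmetry} yields $\pi_2(Q)=\pi_3(Q)$. Second, nerf player $1$ in $Q$ down to the constant-$\tfrac12$ off-diagonal matrix $Q^\ast$ by decreasing $q_{12}=q_{13}=p_{12}$ to $\tfrac12$; by Lemma \ref{lem:symmetry}, the fully symmetric matrix $Q^\ast$ satisfies $\pi_k(Q^\ast)=\tfrac13$ for every $k$. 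Lemma \ref{lem:honesty} applied to player $1$ then gives $\pi_1(Q)\geq\pi_1(Q^\ast)=\tfrac13$, and combining with $\pi_2(Q)=\pi_3(Q)$ produces $2\pi_3(Q)=1-\pi_1(Q)\leq\tfrac23$, whence $\pi_3(P)\leq\pi_3(Q)\leq\tfrac13$.

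For $\pi_1(P)\geq\tfrac13$ I would run the dual two-stage argument. First, nerf player $1$ to duplicate player $2$ by defining $P'\in\mathcal{M}_3$ with $p'_{12}=\tfrac12$ and $p'_{13}=p_{23}$ (and other entries as in $P$); a direct check gives $P'\in\mathcal{D}_3$. Lemma \ref{lem:honesty} gives $\pi_1(P)\geq\pi_1(P')$ (since both $p_{12}$ and $p_{13}$ were weakly decreased), and Lemma \ref{lem:symmetry} gives $\pi_1(P')=\pi_2(P')$. Second, buff player $3$ in $P'$ up to the same constant-$\tfrac12$ matrix $P^\ast=Q^\ast$ by raising $p'_{31}=p'_{32}=1-p_{23}$ to $\tfrac12$. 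Lemma \ref{lem:honesty} then gives $\pi_3(P')\leq\pi_3(P^\ast)=\tfrac13$, so $2\pi_1(P')=1-\pi_3(P')\geq\tfrac23$, and we conclude $\pi_1(P)\geq\pi_1(P')\geq\tfrac13$.

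The main conceptual obstacle is that a single application of the trick behind Proposition \ref{prop:upperbd} only equates two winning probabilities and hence delivers only the weaker bound $\tfrac12$; improving to $\tfrac13$ requires forcing all three probabilities to coincide. However, one cannot in general reach the constant-$\tfrac12$ matrix from an arbitrary $P\in\mathcal{D}_3$ by monotone moves on a single player's row while remaining in $\mathcal{D}_3$ — for instance, setting $p_{31}=p_{32}=\tfrac12$ directly is incompatible with $p_{12}>\tfrac12$. The two-stage procedure circumvents this by first installing a symmetry between two of the three players (a move that preserves double monotonicity), and only then completing the symmetrisation by adjusting the third.
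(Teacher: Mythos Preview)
Your proof is correct and follows essentially the same two-stage buff/nerf argument as the paper: equate two players via Lemma~\ref{lem:symmetry}, then push the third to the constant-$\tfrac12$ matrix and apply Lemma~\ref{lem:honesty} together with $\pi_1+\pi_2+\pi_3=1$. One small remark: your final paragraph frames the obstacle as ``remaining in $\mathcal{D}_3$'', but Lemmas~\ref{lem:symmetry} and~\ref{lem:honesty} hold on all of $\mathcal{M}_n$, so the real reason a single buff/nerf fails is simply that it only forces \emph{two} of the $\pi_i$ to coincide, yielding the bound $\tfrac12$ rather than $\tfrac13$.
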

\begin{proof}
The second inequality was already shown in Proposition \ref{prop:upperbd}.

Let us consider the bound for player $1$. Given $P$ we construct a matrix $P'$
by {nerfing} player $1$ such that he becomes identical to player $2$. That is,
we let $p'_{12}=\frac12$ and $p'_{13}=p_{23}$.  This reduces the winning
probability of player $1$, i.e. $\pi_1(P')\leq \pi_1(P)$, and by symmetry
$\pi_1(P')=\pi_2(P')$. We now claim that this common probability for players
$1$ and $2$ is at least $\frac13$. To see this, suppose we construct $P''$ from
$P'$ by {buffing} player $3$ to become identical to players $1$ and $2$, i.e.
$p''_{ij}=\frac12$ for all $i, j$. On the one hand, this increases the winning
probability of player $3$, i.e. $\pi_3(P'')\geq\pi_3(P')$, but on the other
hand, by symmetry we now have $\pi_1(P'')=\pi_2(P'')=\pi_3(P'')=\frac13$.
Hence, $\pi_3(P')\leq\frac13$ and hence $\pi_1(P')=\pi_2(P')\geq\frac13$, as
desired.

The bound for player $3$ can be shown analogously. We first buff player $3$ to
make him identical to player $2$, and then nerf $1$ to become identical to the
other two players.
\end{proof}

For each $n \in \mathbb{N}$, let $\mathcal{P}_n$ denote the convex polytope of
$n$-dimensional probability vectors, i.e.:
\begin{equation*}
  \mathcal{P}_n = \{ (x_1, \, \dots, \, x_n) \in \mathbb{R}^n : x_i \geq 0 \, \forall \, i \; {\hbox{and}} \; \sum_{i=1}^{n} x_i = 1 \}.
\end{equation*}
Let $\mathcal{F}_n \subset \mathcal{P}_n$ be the closed, convex subset
\begin{equation*}
  \mathcal{F}_n = \{ (x_1, \, \dots, \, x_n) \in \mathcal{P}_n : x_1 \geq x_2 \geq \dots \geq x_n \}.
\end{equation*}
We call $\mathcal{F}_n$ the $n$-dimensional \emph{fair} set. 
A vector $\bm{x} = (x_1, \, \dots, \, x_n) \in \mathcal{P}_n$ will be
said to be \emph{achievable} if there is a matrix $P \in \mathcal{D}_n$ and a symmetric, honest $n$-player tournament
$\bm{T}$ such that $\bm{wv}(\bm{T}, P) = \bm{x}$.
We denote by $\mathcal{A}_n$ the closure of the set of achievable vectors in $\mathcal{P}_n$. Note that Proposition \ref{prop:twoplayer} says
that $\mathcal{A}_2 = \mathcal{F}_2$, whereas we already know from
(\ref{eq:unfairnodes}) that $\mathcal{A}_3 \neq \mathcal{F}_3$.

\begin{figure}
\includegraphics[scale=.8]{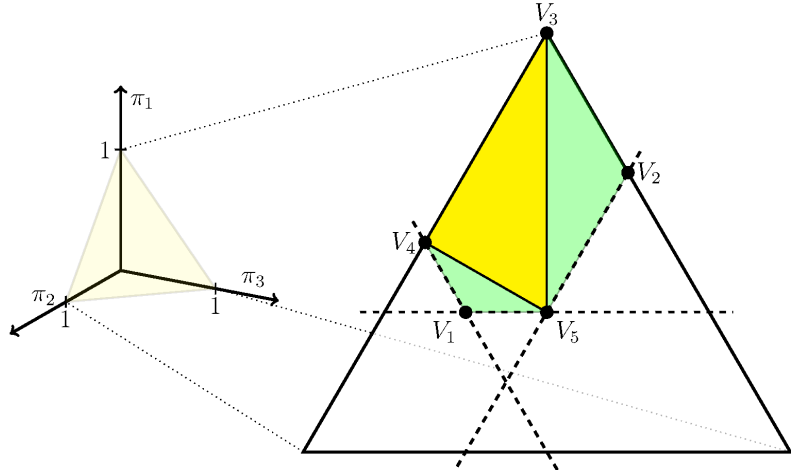}
\caption{\label{fig:polytop_2} Illustration of the set $\mathcal{A}_3$, the closure of the set of achievable win vectors in symetric and honest $3$-player tournaments. The set $\mathcal{P}_3$ is illustrated by the triangle on the right with corners (top), (bottom left), (bottom right) corresponding to the win vectors $(1, 0, 0)$, $(0, 1, 0)$ and $(0, 0, 1)$ respectively. The fair set $\mathcal{F}_3$ is the triangle with corners $V_3=(1, 0, 0), V_4=(\frac12, \frac12, 0)$ and $V_5 =(\frac13, \frac13, \frac13)$. The dotted lines show the three inequalities $\pi_1 \geq \frac13$ (horizontal), $\pi_2\leq \frac12$ (down right diagonal) and $\pi_3 \leq \frac13$ (up right diagonal), as shown in Proposition \ref{prop:threeplayer}. This means that all achievable win vectors are contained in the remaining set, i.e. the convex pentagon with corners $V_3, V_4, V_5$ together with the unfair points $V_1=(\frac13, \frac12, \frac16)$ and $V_2=(\frac23, 0, \frac13)$. We show in Theorem \ref{thm:threeplayer} that every point in this set, except possibly some points on the boundary, is achievable. Thus $\mathcal{A}_3$ is equal to this pentagon.
}\end{figure}

The following result summarizes our findings for symmetric and honest
$3$-player tournaments. This is illustrated in Figure \ref{fig:polytop_2}.
\begin{theorem}\label{thm:threeplayer}
  $\mathcal{A}_{3} = \left\{ (x_1, \, x_2, \, x_3) \in \mathcal{P}_3 : x_1 \geq \frac{1}{3}, \, x_2 \leq \frac{1}{2}, \, x_3 \leq \frac{1}{3} \right\}$. 
\end{theorem}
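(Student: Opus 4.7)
The plan is to establish the two inclusions separately. The inclusion $\mathcal{A}_3 \subseteq \mathcal{R}$, where $\mathcal{R}$ denotes the pentagon appearing on the right-hand side, is essentially immediate from Proposition \ref{prop:threeplayer}: its three inequalities hold for every achievable win vector, and since $\mathcal{R}$ is closed in $\mathcal{P}_3$, they pass to the closure $\mathcal{A}_3$.

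For the reverse inclusion $\mathcal{R}\subseteq\mathcal{A}_3$, since $\mathcal{A}_3$ is already closed it suffices to exhibit a dense subset of $\mathcal{R}$ as achievable. I would first verify that all five corners $V_1,\dots,V_5$ belong to $\mathcal{A}_3$: $V_1$ and $V_2$ appear already in (\ref{eq:unfairnodes}) as limits of win vectors of $\bm{\mathcal{T}}_{1,N}$ and $\bm{\mathcal{T}}_{2,N}$ at the matrix $p_{12}=p_{23}=\tfrac12,\ p_{13}=1$; the vertex $V_5=(\tfrac13,\tfrac13,\tfrac13)$ is the win vector of any symmetric tournament at the uniform matrix $P=(\tfrac12)_{ij}$, by Lemma \ref{lem:symmetry}; and $V_3=(1,0,0)$ and $V_4=(\tfrac12,\tfrac12,0)$ are the win vectors of a (symmetrised) round-robin tournament at the matrices $p_{ij}=1$ for $i<j$ and $p_{12}=\tfrac12,\ p_{13}=p_{23}=1$ respectively, where in each case the outcomes are essentially forced.

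The main step is to fill in the interior of $\mathcal{R}$. The first ingredient is a convex-mixing observation: for each fixed $P\in\mathcal{D}_3$, the set of win vectors achievable at $P$ by symmetric and honest tournaments is convex, because flipping an independent $\lambda$-biased coin and then running $\bm{T}_a$ or $\bm{T}_b$ accordingly yields a symmetric and honest tournament whose win vector is $\lambda\,\bm{wv}(\bm{T}_a,P)+(1-\lambda)\,\bm{wv}(\bm{T}_b,P)$. The second ingredient is a parameterised generalisation of $\bm{\mathcal{T}}_{1,N}, \bm{\mathcal{T}}_{2,N}$ in which the threshold $\tfrac{3N}{4}$ is replaced by $\tau N$ for a tunable $\tau\in[\tfrac12,1]$; the large-$N$ asymptotic win vector then depends only on the ordering of the off-diagonal entries of $P$ relative to $\tau$. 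By varying $\tau$ and the entries of $P$ together and mixing such tournaments with a fair tournament like the round-robin, the convex-mixing observation lets one sweep out a dense subset of $\mathcal{R}$. The principal obstacle is verifying this sweeping claim -- that the parameterised family really does cover the interior of $\mathcal{R}$ rather than a proper sub-region -- which I would handle by a case analysis on the ordering of $p_{12},p_{13},p_{23}$ relative to $\tau$, anchored by the corner computations above at each extreme of $\mathcal{R}$, together with Lemma \ref{lem:honesty} to monitor the direction in which win probabilities move under perturbations of $P$.
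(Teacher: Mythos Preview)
Your argument for the containment $\mathcal{A}_3\subseteq\mathcal{R}$ is fine and matches the paper. The difficulty is entirely in the reverse inclusion, and here you have correctly identified the crucial constraint on convex mixing---it only works when the two tournaments are specialised at the \emph{same} matrix $P$---but you then fail to exploit it in the natural way.

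The paper's approach is much simpler than your sweeping argument: it realises all five corners $V_1,\dots,V_5$ as limits of win vectors of symmetric, honest tournaments specialised at \emph{one and the same} matrix, namely the matrix $P$ with $p_{12}=p_{23}=\tfrac12$, $p_{13}=1$ that already appears in (\ref{eq:unfairnodes}). You already have $V_1,V_2$ at this $P$. For $V_5$ one uses the trivial tournament ``pick the winner uniformly at random'', which works for any $P$ whatsoever. For $V_3$ one takes $N$ rounds of round-robin and declares the player with most wins the winner; at this particular $P$, player~1 accumulates roughly $3N/2$ wins versus $N$ and $N/2$ for the others, so $\bm{wv}\to(1,0,0)$. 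For $V_4$ one does the same but eliminates the player with fewest wins and flips a coin between the other two; at this $P$ player~3 is eliminated a.a.s., giving $\bm{wv}\to(\tfrac12,\tfrac12,0)$. Once all five corners are (asymptotically) achievable at the same $P$, a single application of the convex-mixing observation yields every point of the open pentagon as achievable, and closure finishes the proof.

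Your detour through different matrices for $V_3,V_4,V_5$ forces you into the parameterised-$\tau$ sweeping argument, which you yourself flag as the ``principal obstacle'' and leave unverified. That case analysis is not obviously wrong, but it is both unnecessary and, as stated, a genuine gap: you would have to show that for each point of the interior of $\mathcal{R}$ there is a \emph{single} $P$ at which a suitable mixture of your parameterised tournaments hits that point, and nothing in your outline establishes this. The fix is simply to use the fixed matrix $P$ above throughout.
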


\begin{proof}
  Denote the above set by $\mathcal{S}$. By Proposition \ref{prop:threeplayer},
  we know that $\mathcal{A}_{3} \subseteq \mathcal{S}$, so it only remains to
  prove that $\mathcal{S} \subseteq \mathcal{A}_{3}$. We start with two
  observations:
  \begin{itemize}
    \item $\mathcal{S}$ is a convex polygon with five vertices:
  \begin{equation*}\label{eq:vertices}
    V_1 = \left( \frac{1}{3}, \, \frac{1}{2}, \, \frac{1}{6} \right), \;\; V_2 = \left( \frac{2}{3}, \, 0, \, \frac{1}{3} \right), \;\; V_3 = (1,\,0,\,0), \;\; V_4 = \left( \frac{1}{2}, \, \frac{1}{2}, \, 0 \right), \;\; V_5 = \left( \frac{1}{3}, \, \frac{1}{3}, \, \frac{1}{3} \right). 
  \end{equation*}
\item Suppose $\bm{\mathcal{T}}^0$, $\bm{\mathcal{T}}^1$ are specializations
  of symmetric and honest $n$-player tournaments $\bm{T}^0$, $\bm{T}^1$
  respectively, and with the same matrix $P \in \mathcal{M}_n$. For
  $p \in [0,\,1]$ we let $\bm{T}^p$ denote the tournament: ``With probability
  $p$ play $\bm{T}^0$ and with probability $1-p$ play $\bm{T}^1$''. Clearly,
  $\bm{T}^p$ is also symmetric and honest for any $p$ and, if
  $\bm{\mathcal{T}}^p$ is its specialization for the matrix $P$, then
  $\bm{wv}(\bm{\mathcal{T}}^p) = p \cdot \bm{wv}(\bm{\mathcal{T}}^0) + (1-p) \cdot \bm{wv}(\bm{\mathcal{T}}^1)$.
    \end{itemize}
  \par It follows from these observations
  that, in order to prove that $\mathcal{S} \subseteq \mathcal{A}_3$, it
  suffices to construct, for each $i = 1,\,\dots,\,5$, a sequence
  $\bm{T}_{i,\,N}$ of symmetric and honest tournaments
  such that $\bm{wv}(\bm{\mathcal{T}}_{i,\,N}) \rightarrow V_i$ as
  $N \rightarrow \infty$, where $\bm{\mathcal{T}}_{i,\,N}$ is the specialization of
  $\bm{T}_{i,\,N}$ by the unique matrix $P = (p_{ij}) \in \mathcal{D}_{3}$
  satisfying $p_{12} = p_{23} = \frac{1}{2}$, $p_{13} = 1$.
  \par Indeed, we've already constructed appropriate sequences for
  $i = 1,\,2$, by (\ref{eq:unfairnodes}), so it remains to take care of
  $i = 3,\,4,\,5$. 
  \\
  \\ 
    {\sc Tournament $\bm{T}_{3,\,N}$}: Play $N$ iterations of round-robin. Choose
    the winner uniformly at random from among the players with the maximum
    number of wins.
    \par It is clear that $\bm{T}_{3,\,N}$ is symmetric and honest and
    that $\bm{wv}(\bm{\mathcal{T}}_{3,\,N}) \rightarrow V_3$ as
    $N \rightarrow \infty$.
    \\
    \\
      {\sc Tournament $\bm{T}_{4,\,N}$}: Play $N$ iterations of round-robin.
      Choose a player uniformly at random from among those with the minimum
      number of wins. Flip a coin to determine the winner among the two
      remaining players.
      \par It is clear that $\bm{T}_{4,\,N}$ is symmetric and honest and that
      $\bm{wv}(\bm{\mathcal{T}}_{4,\,N}) \rightarrow V_4$ as
      $N \rightarrow \infty$. 
      \\
      \\
        {\sc Tournament $\bm{T}_{5}$:} Just choose the winner uniformly at
        random. Obviously $\bm{wv}(\bm{\mathcal{T}}_5) = V_5$ and the
        tournament is symmetric and honest. 
\end{proof}

To conclude this section, we finally give the proof of Lemma \ref{lem:honesty}.
\begin{proof}[Proof of Lemma \ref{lem:honesty}.]

  Fix $k, l\in[n]$ and $\delta>0$. Consider two matrices
  $P=(p_{ij}), P'=(p'_{ij}) \in \mathcal{M}_n$ such that $p'_{kl}=p_{kl}+\delta$,
  $p'_{lk}=p_{lk}-\delta$ and $p'_{ij}=p_{ij}$ whenever $\{i, j\}\neq \{k, l\}$.
  The proof will involve
  interpolating between the specializations $(\bm{T}, P)$ and $(\bm{T}, P')$
  by a sequence of what we'll call ``tournaments-on-steroids''.
  
  For a given $r\geq 0$ we imagine that we play the tournament $\bm{T}$ where,
  in the first $r$ matches, winning probabilities are determined by $P'$, and
  after that according to $P$. The idea is that, at the beginning of the
  tournament, we give player $k$ a performance enhancing drug that only works
  against $l$, and only lasts for the duration of $r$ matches (regardless of
  whether he plays in those matches or not). 
  With some slight abuse of terminology, we will consider these as
  specializations
  of $\bm{T}$, and denote them by $\bm{\mathcal{T}}^r$, and the corresponding
  winning probability of a player $i\in [n]$ by $\pi_i^r$. Clearly
  $\bm{\mathcal{T}}^0 = (\bm{T}, P)$, and taking $m$ equal to the maximum
  number of matches played in $\bm{T}$, it follows that
  $\bm{\mathcal{T}}^m = (\bm{T}, P')$. Hence, it suffices to show that
  $\pi_k^r$ is increasing in $r$.

  Suppose we run the specializations $\bm{\mathcal{T}}^r$ and
  $\bm{\mathcal{T}}^{r+1}$ until either $\bm{T}$ chooses a pair of players to
  meet each other in match $r+1$, or a winner is determined before this happens.
  As both specializations evolve according to the same probability distribution
  up until this point, we may assume that both specializations have behaved
  identically so far. The only way the winning probability for player $k$ can
  differ in the two specializations from this point onwards
  is if match $r+1$ is between
  players $k$ and $l$. Assuming this is the case, let $\pi_k^+$ denote the
  probability that $k$ wins the tournament conditioned on him winning the
  current
  match and assuming all future matches are determined according to $P$, that
  is, according to the specialization $(\bm{T}, P)$. Similarly $\pi_k^-$ denotes
  the probability that he wins conditioned on him losing the match. This means
  that the winning probability for $k$ is
  $p_{kl}\cdot\pi_k^+ + p_{lk}\cdot\pi_k^-$ in $\bm{\mathcal{T}}^{r}$ and
  $p'_{kl}\cdot\pi_k^+ + p'_{lk}\cdot\pi_k^-$ in $\bm{\mathcal{T}}^{r+1}$. But by
  honesty, $\pi_k^+\geq \pi_k^-$, from which it is easy to check that the winning
  probability is at least as high in $\bm{\mathcal{T}}^{r+1}$ as in
  $\bm{\mathcal{T}}^{r}$. We see
  that, for any possibility until match $r+1$ is played, the probability for
  $k$ to win in $\bm{\mathcal{T}}^{r+1}$ is at least as high as in
  $\bm{\mathcal{T}}^r$. Hence $\pi_k^{r+1}\geq \pi_k^r$, as desired.
\end{proof}

\begin{remark}\label{rem:unboundedcouple}

  {\bf (i)} The above proof still works without assuming a bound on the number
  of matches in $\bm{T}$. The only difference will be that $(\bm{T}, P')$ is
  now the limit of $\bm{\mathcal{T}}^r$ as $\rightarrow\infty$.

  {\bf (ii)} If $\bm{T}$ is strictly honest, one can see that
  $\pi_k^{r+1}>\pi_k^r$ for any $P \in \mathcal{M}^{o}_{n}$ and any $r$ such that
  there is a positive probability that match $r+1$ is between players $k$ and
  $l$. Hence, $\pi_k(P)$ is strictly increasing in $p_{kl}$ in this case.
\end{remark}

\section{$n$-Player Tournaments}\label{sect:nplayer}

Already for $n=4$, it appears to be a hard problem to determine which win
vectors are achievable. The aim of this section is to present partial results
in this direction. As we saw in the previous section, $\mathcal{A}_3$ can be completely
characterized by the minimum and maximum win probability each player can
attain. Thus, a natural starting point to analyze $\mathcal{A}_n$ for
$n\geq 4$ is to try to generalize this. For each $i \in [n]$, let
\begin{eqnarray*}
  \Pi^{i,\,n} := \max \{ x_i : (x_1,\,\dots,\,x_n) \in \mathcal{A}_n \}, \\
  \Pi_{i,\,n} := \min \{ x_i : (x_1,\,\dots,\,x_n) \in \mathcal{A}_n \}.
\end{eqnarray*}
In other words, $\Pi^{i,\,n}$ (resp. $\Pi_{i,\,n}$) is the least upper bound
(resp. greatest lower bound) for the win probability for player $i$, taken over
all doubly monotonic specializations of all symmetric and honest $n$-player
tournaments.

It is not too hard to construct a sequence of doubly monotonic specializations of
symmetric and honest tournaments such that $\pi_1\rightarrow 1$. Thus we have
$\Pi^{1,\,n}=1$ and $\Pi_{i,\,n}=0$ for all $i>1$. Moreover, by Proposition
\ref{prop:upperbd}, $\Pi^{i,\,n}\leq \frac12$ for all $i>1$. We can extract a
little more information by using the the technique of
``buffing and nerfing a player'' which was used in Propositions
\ref{prop:upperbd} and \ref{prop:threeplayer}.

\begin{proposition}\label{prop:generalbuffnerf}
  (i) For every $n \in \mathbb{N}$, $\Pi^{i,\,n}$ is a decreasing function
  of $i$.
  \par (ii) $\Pi^{3,\,4} \leq \frac{3}{8}$.
  \par (iii) $\Pi_{1,\,4} \geq \frac{1}{6}$.
\end{proposition}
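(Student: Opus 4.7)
The three parts are of increasing subtlety, and I would tackle them in the order (i), (iii), (ii), since (iii) is needed to make (ii) approachable.

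For (i), the plan is to reduce to adjacent pairs and show $\Pi^{j,n} \geq \Pi^{j+1,n}$ for each $1 \le j < n$. Given $P \in \mathcal{D}_n$ and a symmetric honest $\bm{T}$, I would buff player $j+1$ up to player $j$: define $P^{\ddagger}$ by $p^{\ddagger}_{j+1,k} = p_{j,k}$ for $k \notin \{j,j+1\}$ and $p^{\ddagger}_{j,j+1} = 1/2$. Only row and column $j+1$ change, and the remaining double monotonicity inequalities at the border rows/columns are preserved because $p_{j,k} \geq p_{j+1,k}$ was already the case. Each single-entry change is a buff for player $j+1$, so iterating Lemma \ref{lem:honesty} gives $\pi_{j+1}(P^{\ddagger}) \geq \pi_{j+1}(P)$. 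Players $j$ and $j+1$ are now indistinguishable in $P^{\ddagger}$, so Lemma \ref{lem:symmetry} yields $\pi_j(P^{\ddagger}) = \pi_{j+1}(P^{\ddagger}) \geq \pi_{j+1}(P)$. Taking suprema gives the desired inequality.

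For (iii), I would mirror the three-player argument for $\pi_1 \geq 1/3$ in Proposition \ref{prop:threeplayer}, carrying it one stage further. First nerf player $1$ to match player $2$, setting $p'_{12}=1/2$, $p'_{13}=p_{23}$, $p'_{14}=p_{24}$; each entry change is a nerf for $1$ so Lemma \ref{lem:honesty} gives $\pi_1(P') \leq \pi_1(P)$. Then nerf both $1$ and $2$ down to match $3$, obtaining $P''$ with rows $1,2,3$ all equal to $(1/2,1/2,1/2,p_{34})$. All entry changes along the chain that involve player $1$ (i.e.\ $p_{12}$, $p_{13}$, $p_{14}$) are still nerfs, since $p_{34} \leq p_{24}$ by column monotonicity, so $\pi_1(P'') \leq \pi_1(P')$. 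One checks $P'' \in \mathcal{D}_4$ directly. In $P''$, Lemma \ref{lem:symmetry} gives $\pi_1(P'')=\pi_2(P'')=\pi_3(P'')$, while Proposition \ref{prop:upperbd} applied in $P''$ gives $\pi_4(P'') \leq 1/2$. Hence $3\pi_1(P'') + \pi_4(P'') = 1$ yields $\pi_1(P'') \geq 1/6$, and the chain $\pi_1(P)\geq\pi_1(P')\geq\pi_1(P'')\geq 1/6$ closes the proof.

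Part (ii) is the main obstacle. My starting move is to buff player $3$ to match player $2$, giving $P' \in \mathcal{D}_4$ with $\pi_3(P) \leq \pi_3(P')$ and $\pi_2(P')=\pi_3(P')$. Writing $\pi_1(P') + 2\pi_3(P') + \pi_4(P') = 1$, the target $\pi_3(P') \leq 3/8$ is equivalent to the coupled lower bound $\pi_1(P') + \pi_4(P') \geq 1/4$. Applying (iii) to $P'$ already delivers $\pi_1(P') \geq 1/6$, so the remaining slack to produce is $1/12$, and this is the hardest step. My plan is to produce it by a second comparison on the other end: starting from $P'$, buff player $4$ up to match players $2,3$, obtaining $P^{\#}$ in which players $2,3,4$ are mutually identical, and read off an $\pi_4$-bound via (iii) applied to $P^{\#}$. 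The key technical difficulty is that the transitions $P'\to P''$ (nerfing $1$) and $P'\to P^{\#}$ (buffing $4$) contain entry changes not involving the specific player whose winning probability one wants to track, so Lemma \ref{lem:honesty} does not, in a single step, propagate monotonicity through the whole chain. I expect the cleanest way around this is to order the single-entry changes so that every intermediate matrix stays in $\mathcal{D}_4$ and to track an aggregate quantity such as $\pi_1+\pi_4$, exploiting the fact that after buffing $3$ to $2$ players $1$ and $4$ play symmetric ``external'' roles with respect to the merged pair $\{2,3\}$. Combining the resulting lower estimate on $\pi_1(P')+\pi_4(P')$ with the equality $2\pi_3(P') = 1 - \pi_1(P') - \pi_4(P')$ yields $\pi_3(P) \leq \pi_3(P') \leq 3/8$.
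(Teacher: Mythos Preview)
Your argument for (i) is correct and is essentially the paper's proof, phrased directly rather than by contradiction.

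Your argument for (iii) contains a genuine gap. In passing from $P'$ to $P''$ you are not only nerfing player~$1$; you are also nerfing player~$2$, i.e.\ changing the entries $p_{23}$ and $p_{24}$. Lemma~\ref{lem:honesty} says that $\pi_1$ is monotone in $p_{1k}$, but it says \emph{nothing} about how $\pi_1$ depends on $p_{23}$ or $p_{24}$. So the sentence ``All entry changes along the chain that involve player~$1$ \dots\ are still nerfs, so $\pi_1(P'')\le\pi_1(P')$'' does not follow: the entry changes that do \emph{not} involve player~$1$ could push $\pi_1$ up. The paper avoids this trap by never changing more than one player at a time while tracking a probability. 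Concretely, starting from $P'$ (where $1$ and $2$ are identical), it separately buffs player~$3$ to the level of $1,2$ to deduce $\pi_3(P')\le\tfrac13$, and buffs player~$4$ to the level of $1,2$ to deduce $\pi_4(P')\le\tfrac13$; then $2\pi_1(P')=1-\pi_3(P')-\pi_4(P')\ge\tfrac13$ gives $\pi_1(P)\ge\pi_1(P')\ge\tfrac16$. Each step moves exactly one player and tracks \emph{that} player's probability.

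For (ii) you correctly identify the difficulty but do not resolve it; the plan to ``track an aggregate quantity such as $\pi_1+\pi_4$'' is not an argument, and the separate bounds $\pi_1(P')\ge\tfrac16$ and ``$\pi_4$-bound from (iii)'' cannot close the gap (indeed $\pi_4(P')$ can be $0$). The paper's proof again follows the one-player-at-a-time principle. After buffing $3$ to match $2$ (so $\pi_2(P')=\pi_3(P')$ and hence $\pi_1(P')\le 1-2\pi_3(P')$), it nerfs player~$1$ alone down to $2,3$ to get $P''$ with $\pi_1(P'')\le\pi_1(P')$ and $\pi_1(P'')=\pi_2(P'')=\pi_3(P'')$, and then buffs player~$4$ alone up to $1,2,3$ to get $\pi_4(P'')\le\tfrac14$. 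Combining,
\[
1=3\pi_1(P'')+\pi_4(P'')\le 3\pi_1(P')+\tfrac14\le 3\bigl(1-2\pi_3(P')\bigr)+\tfrac14,
\]
which gives $\pi_3(P)\le\pi_3(P')\le\tfrac38$. The missing idea in your approach to both (ii) and (iii) is this discipline: change one player, track that player, and use the resulting equalities together with $\sum_i\pi_i=1$ to transfer information to the player you actually care about.
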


\begin{proof}
  \emph{(i)} Suppose, on the contrary, that $\Pi^{i+1,\,n} > \Pi^{i,\,n}$, for
  some $n \geq 2$ and $1 \leq i < n$. Then there must exist some symmetric and
  honest $n$-player tournament $\bm{T}$ and some matrix
  $P \in \mathcal{D}_n$ such that $\pi_{i+1}(P) > \Pi^{i,\,n}$. Now buff player
  $i+1$ until he is indistinguishable from $i$ (according to the same
  kind of procedure as in the proof of Proposition \ref{prop:upperbd}).
  Let $P'$ be the resulting matrix. By symmetry and honesty we then have
  $\Pi^{i,\,n} \geq \pi_{i}(P') =
  \pi_{i+1}(P') \geq \pi_{i+1}(P) > \Pi^{i+1,\,n}$, a contradiction.
  \par \emph{(ii)} Let $\bm{T}$ be any symmetric and honest
  $4$-player tournament and let $P \in \mathcal{D}_n$. Perform the following
  three modifications of the
  specialization:
  \par \emph{Step 1:} Buff player $3$ until he is indistinguishable from $2$.
  \par \emph{Step 2:} Nerf player $1$ until he
  is indistinguishable from $2$ and $3$.
  \par \emph{Step 3:} Buff player $4$ until he is indistinguishable from
  $1,\,2$ and $3$.
  \\
  Let $P', P''$ and $P'''$ be the
  corresponding matrices at the end of Steps $1, 2$ and $3$ respectively. By
  Lemmas \ref{lem:symmetry} and
  \ref{lem:honesty}, we first have
  \begin{equation}\label{eq:step1}
    \pi_{3}(P') \geq \pi_3(P), \;\;\;\;\;\; \pi_{2}(P') = \pi_{3}(P').
  \end{equation}
  The latter equality implies, in particular, that
  \begin{equation}\label{eq:step11}
    \pi_{1}(P') \leq 1-2\pi_{3}(P').
  \end{equation}
  A second application of Lemmas \ref{lem:symmetry} and \ref{lem:honesty}
  implies that
  \begin{equation}\label{eq:step2}
    \pi_{1}(P'') \leq \pi_{1}(P'), \;\;\;\;\;\; \pi_{1}(P'')=\pi_{2}(P'')=\pi_{3}(P'').
  \end{equation}
  A third application yields
  \begin{equation}
    \pi_{4}(P''') \geq \pi_4(P''), \;\;\;\;\;\; \pi_{1}(P''')=\pi_{2}(P''')=\pi_{3}(P''')=\pi_{4}(P''') = \frac{1}{4}.
  \end{equation}
  Putting all this together, we have
  \begin{eqnarray*}
    1 = 3\pi_{1}(P'') + \pi_{4}(P'') \leq 3(1-2\pi_{3}(P')) +
    \frac{1}{4} \Rightarrow \pi_{3}(P') \leq \frac{3}{8} \Rightarrow \pi_3(P) \leq \frac{3}{8}.
  \end{eqnarray*}
  \par \emph{(iii)} As before, let $\bm{T}$ be any symmetric and honest
  $4$-player tournament and let $P \in \mathcal{D}_n$. We must show that
  $\pi_1(P) \geq \frac{1}{6}$. Perform the following two modifications of the
  specialization:
  \par \emph{Step 1:} Nerf player $1$ until he is indistinguishable from
  $2$.
  \par \emph{Step 2:} Buff player $3$ until he
  is indistinguishable from $1$ and $2$.
  \\
  Let $P', P''$ be the
  corresponding matrices at the end of Steps $1$ and $2$ respectively. Twice
  applying lemmas \ref{lem:symmetry} and
  \ref{lem:honesty} we get
  \begin{eqnarray}
    \pi_1(P') \leq \pi_1(P), \;\;\;\;\;\; \pi_{1}(P') = \pi_{2}(P'), \label{eq:stepp1} \\
    \pi_3(P'') \geq \pi_{3}(P'), \;\;\;\;\;\; \pi_1(P'') = \pi_2(P'') = \pi_3(P''). \label{eq:stepp2}
  \end{eqnarray}
  From (\ref{eq:stepp2}) we deduce that $\pi_3(P') \leq \frac{1}{3}$. By a
  similar argument, where in Step $2$ one instead buffs $4$ to the level of
  $1$ and $2$, one shows that $\pi_4(P') \leq \frac{1}{3}$. Then, with the
  help of (\ref{eq:stepp1}), we have
  \begin{eqnarray*}
    1 = \pi_1(P') + \pi_2(P') + \pi_3(P') + \pi_4(P') \leq 2\pi_1(P') + 2\cdot \frac{1}{3} \Rightarrow \pi_1(P') \geq \frac{1}{6} \Rightarrow \pi_1(P) \geq \frac{1}{6}.
    \end{eqnarray*}
\end{proof}

We next present a way to construct many symmetric and honest but unfair
tournaments. For each $n \in \mathbb{N}$, let $\mathcal{G}_n$ denote the
family of labelled
digraphs (loops and multiple arcs allowed) on the vertex set
$\{1,\,2,\,\dots,\,n\}$ whose set of arcs satisfies the following conditions:
\par \emph{Rule 1:} There are exactly two arcs going out from each vertex.
\par \emph{Rule 2:} Every arc $(i,\,j)$ satisfies $j \leq i$.
\par \emph{Rule 3:} If $(i, \, j_1)$ and $(i, \, j_2)$ are the two outgoing
arcs from $i$, then $j_1 = j_2 \Rightarrow j_1 = 1$ or $j_1 = i$. In other
words, if the two arcs have the same destination, then either they are both
loops or the destination is vertex $1$.
\\
\par To each digraph $G \in \mathcal{G}_n$ we associate a vector
$v(G) = (v_1,\,\dots,\,v_n) \in \mathcal{P}_n$ according to the rule
\begin{equation}\label{eq:graphvector1}
  v_i = \frac{{\hbox{indeg}}_{G}(i)}{2n}.
\end{equation}
Note that since, by Rule 1, each vertex has outdegree $2$, we can also write
this formula as
\begin{equation}\label{eq:graphvector2}
  v_i = \frac{1}{n} + \frac{{\hbox{indeg}}_{G}(i) - {\hbox{outdeg}}_{G}(i)}{2n}.
\end{equation}
In what follows, each vector $v(G)$ will be interpreted as the win vector of a
certain symmetric and honest tournament. According to (\ref{eq:graphvector2}),
the arcs of $G$ instruct us how to ``redistribute'' win probabilities amongst
the players, starting from the uniform distribution, where each arc
``carries with it'' $\frac{1}{2n}$ of probability.
\\
\par Let $\mathcal{A}^{*}_{n}$ denote the convex hull of all vectors
$v(G)$, $G \in \mathcal{G}_n$. It is easy to see that $\mathcal{A}^{*}_{1}$ is
the single point $(1)$ - the only digraph in $\mathcal{G}_1$ consists of the
single vertex $1$ with two loops. For $n \geq 2$, the number of digraphs in
$\mathcal{G}_n$ is $\prod_{i=2}^{n} 2 + \binom{i}{2}$
since, for each $i \geq 2$, the possibilities for the two outgoing arcs from
vertex $i$ are:
\par - send both to $i$ ($1$ possibility),
\par - send both to $1$ ($1$ possibility),
\par - send them to distinct $j_1, \, j_2 \in \{1,\,\dots,\,i\}$
($\binom{i}{2}$ possibilities).
\\
The number of corners in the convex polytope $\mathcal{A}^{*}_{n}$ is, however,
much less than this. For a digraph $G$ to correspond to a corner of
$\mathcal{A}^{*}_{n}$, there must exist some vector
$\bm{a} = (a_1,\,\dots,\,a_n) \in \mathbb{R}^n$ such that $v(G)$ is the
unique maximizer, in
$v(\mathcal{G}_n)$, of the sum $\sum_{i=1}^{n} a_i v_i(G)$. We can assume that the
coefficients $a_i$ are distinct numbers. For a given
vector $\bm{a}$, a digraph which maximizes the sum is determined by the
following procedure: List the components of $\bm{a}$ in decreasing order, say
$a_{i_1} > a_{i_2} > \dots > a_{i_n}$. Now draw as many arcs as possible first to
$i_1$, then to $i_2$ and so on, all the while respecting Rules 1,2,3 above.
\par We see that the resulting digraph depends only on the ordering of the
components of $\bm{a}$, not on their exact values. In other words, there is a
well-defined map $f: \mathcal{S}_n \rightarrow \mathcal{P}_n$ from permutations
of $\{1,\,\dots,\,n\}$ to corners of $\mathcal{A}^{*}_{n}$,
$f(\sigma) = v(G_{\sigma})$, where, for $\sigma = (\sigma_1,\,\dots,\,\sigma_n) \in \mathcal{S}_n$, the digraph $G_{\sigma}$ is given by the procedure:
\par ``Draw as many arcs as possible first to vertex $\sigma_1$, then to
$\sigma_2$ and so on, all the while respecting Rules 1, 2, 3''.

\begin{table}[ht!]
  \begin{center}
    \begin{tabular}{|c|c|c|} \hline
      $\sigma$ & $G_{\sigma}$ & $v(G_{\sigma})$ \\ \hline \hline
      $(1,\,2)$ & \begin{tikzpicture}[->,>=stealth',shorten >=1pt,auto,node distance=4cm,
                thick,main node/.style={circle,draw,font=\Large\bfseries}]

  \node[main node] (1) {1};
  \node[main node] (2) [right of=1] {2};

  \path
  (1) edge [loop above] node {} (1)
        edge [loop below] node {} (1)
        (2) edge [bend right] node {} (1)
        edge [bend left] node {} (1);  
\end{tikzpicture} & $(1,\,0)$ \\ \hline
      $(2,\,1)$ & \begin{tikzpicture}[->,>=stealth',shorten >=1pt,auto,node distance=4cm,
                thick,main node/.style={circle,draw,font=\Large\bfseries}]

  \node[main node] (1) {1};
  \node[main node] (2) [right of=1] {2};

  \path
    (1) edge [loop above] node {} (1)
        edge [loop below] node {} (1)
    (2) edge [loop above] node {} (2)
        edge [loop below] node {} (2);      
\end{tikzpicture} & $\left( \frac{1}{2}, \, \frac{1}{2} \right)$ \\ \hline \hline
      $(1,\,2,\,3) \; {\hbox{or}} \; (1,\,3,\,2)$ & \begin{tikzpicture}[->,>=stealth',shorten >=1pt,auto,node distance=2cm,
                thick,main node/.style={circle,draw,font=\Large\bfseries}]

  \node[main node] (1) {1};
  \node[main node] (2) [below left of=1] {2};
  \node[main node] (3) [below right of=1] {3};

  \draw
    (1) to [out=150, in=120, looseness=8] (1);
  \draw
  (1) to [out=30, in=60, looseness=8] (1);
        \path
        (2) edge [bend left] node {} (1)
            edge [bend right] node {} (1) 
    (3) edge [bend left] node {} (1)
        edge [bend right] node {} (1);      
\end{tikzpicture} & $(1,\,0,\,0)$ \\ \hline
      $(2,\,1,\,3)$ & \begin{tikzpicture}[->,>=stealth',shorten >=1pt,auto,node distance=2cm,
                thick,main node/.style={circle,draw,font=\Large\bfseries}]

  \node[main node] (1) {1};
  \node[main node] (2) [below left of=1] {2};
  \node[main node] (3) [below right of=1] {3};

   \draw
    (1) to [out=150, in=120, looseness=8] (1);
  \draw
  (1) to [out=30, in=60, looseness=8] (1);
   \draw
    (2) to [out=120, in=150, looseness=8] (2);
  \draw
  (2) to [out=240, in=210, looseness=8] (2);
  \path
    (3) edge node {} (1)
        edge node {} (2);      
\end{tikzpicture} & $\left( \frac{1}{2}, \frac{1}{2}, \, 0 \right)$ \\ \hline
      $(2,\,3,\,1)$ & \begin{tikzpicture}[->,>=stealth',shorten >=1pt,auto,node distance=2cm,
                thick,main node/.style={circle,draw,font=\Large\bfseries}]

  \node[main node] (1) {1};
  \node[main node] (2) [below left of=1] {2};
  \node[main node] (3) [below right of=1] {3};

 \draw
    (1) to [out=150, in=120, looseness=8] (1);
  \draw
  (1) to [out=30, in=60, looseness=8] (1);
   \draw
    (2) to [out=120, in=150, looseness=8] (2);
  \draw
  (2) to [out=240, in=210, looseness=8] (2);
  \path
  (3) edge [loop above] node {} (3)
  edge node {} (2);    
\end{tikzpicture} & $\left( \frac{1}{3}, \, \frac{1}{2}, \, \frac{1}{6} \right)$ \\ \hline
      $(3,\,1,\,2)$ & \begin{tikzpicture}[->,>=stealth',shorten >=1pt,auto,node distance=2cm,
                thick,main node/.style={circle,draw,font=\Large\bfseries}]

  \node[main node] (1) {1};
  \node[main node] (2) [below left of=1] {2};
  \node[main node] (3) [below right of=1] {3};

\draw
    (1) to [out=150, in=120, looseness=8] (1);
  \draw
  (1) to [out=30, in=60, looseness=8] (1);
   \draw
    (3) to [out=60, in=30, looseness=8] (3);
  \draw
  (3) to [out=300, in=330, looseness=8] (3);
  \path
  (2) edge [bend left] node {} (1)
  edge [bend right] node {} (1);  
\end{tikzpicture} & $\left( \frac{2}{3}, \, 0, \, \frac{1}{3} \right)$ \\ \hline
      $(3,\,2,\,1)$ & \begin{tikzpicture}[->,>=stealth',shorten >=1pt,auto,node distance=2cm,
                thick,main node/.style={circle,draw,font=\Large\bfseries}]

  \node[main node] (1) {1};
  \node[main node] (2) [below left of=1] {2};
  \node[main node] (3) [below right of=1] {3};

  \draw
    (1) to [out=150, in=120, looseness=8] (1);
  \draw
  (1) to [out=30, in=60, looseness=8] (1);
   \draw
    (2) to [out=120, in=150, looseness=8] (2);
  \draw
  (2) to [out=240, in=210, looseness=8] (2);
   \draw
    (3) to [out=60, in=30, looseness=8] (3);
  \draw
  (3) to [out=300, in=330, looseness=8] (3); 
  \end{tikzpicture} & $\left( \frac{1}{3}, \, \frac{1}{3}, \, \frac{1}{3} \right)$ \\ \hline
    \end{tabular}
  \end{center}
  \vspace{0.3cm}
  \caption{All $\sigma \in \mathcal{S}_n$, $G_{\sigma} \in \mathcal{G}_n$ and
    corners $v(G_{\sigma})$ of $\mathcal{A}^{*}_{n}$, for $n = 2, \, 3$.}
  \label{tab:corners}
  \end{table}

Table \ref{tab:corners} shows how this works for $n=2$ and $n=3$. The map
$f$ is not injective for any $n \geq 3$ and the exact number of corners in
$\mathcal{A}^{*}_n$ is computed in Proposition \ref{prop:corners} below. For the
time being, the crucial takeaway from Table \ref{tab:corners} is that
$\mathcal{A}^{*}_{2} = \mathcal{A}_2$ and $\mathcal{A}^{*}_3 = \mathcal{A}_3$.
Recall also that $\mathcal{A}^{*}_1 = \mathcal{A}_1 = \{ (1)\}$.
\par We are ready to formulate

\begin{conjecture}\label{conj:nplayer}
  $\mathcal{A}^{*}_n = \mathcal{A}_n$, for every $n \in \mathbb{N}$.
\end{conjecture}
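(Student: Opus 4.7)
My plan is to prove the two containments $\mathcal{A}^{*}_n \subseteq \mathcal{A}_n$ and $\mathcal{A}_n \subseteq \mathcal{A}^{*}_n$ separately; only the second is the genuine conjecture. For the first, by the observation used in the proof of Theorem \ref{thm:threeplayer} that a convex combination (via a preliminary coin flip) of two symmetric honest tournaments is again symmetric and honest, it suffices to realise each corner $v(G_\sigma)$ of $\mathcal{A}^{*}_n$ as a limit of win vectors. For each $\sigma \in \mathcal{S}_n$, I would build a tournament $\bm{T}_{\sigma,N}$ in the frugal template introduced in Section \ref{sect:threeplayer}: pick one player $i$ uniformly at random to sit out, play $N$ rounds of round-robin among the remaining $n-1$ players to identify their relative strengths, then flip a fair coin between the two outgoing arcs of $i$ in $G_\sigma$ to choose the winner (interpreting an arc's endpoint as a rank among the round-robin participants, and loops as ``the sitting-out player wins''). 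Rules 1--3 defining $\mathcal{G}_n$ are tailor-made so that this recipe is well-defined (Rule 3) and produces an honest tournament for every $P \in \mathcal{M}_n$, not just the doubly monotonic $P$ for which it reproduces $v(G_\sigma)$ (Rule 2 ensures an arc never leads to a strictly worse player). Honesty for arbitrary $P$ would be checked by a finite case analysis extending the one carried out for $\bm{T}_1$ and $\bm{T}_2$.

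For the reverse containment, the plan leans on Lemma \ref{lem:honesty}: each coordinate $\pi_k(P)$ of the win vector of a symmetric honest tournament is monotone in every off-diagonal entry of $P$. Consequently, the extrema over $P \in \mathcal{D}_n$ of any linear functional $\sum_i a_i \pi_i$ are attained at the $\{0, \frac{1}{2}, 1\}$-valued vertices of $\mathcal{D}_n$. Each such vertex corresponds to an ordered partition of $[n]$ into equivalence classes in which players within a class are mutually indistinguishable and players in different classes are totally dominated one way or the other. By Lemma \ref{lem:symmetry}, all players in one class share a common win probability, so the win vector at such $P$ reduces to one number per class. The upper bound thus becomes a combinatorial assertion: every tuple of class-probabilities arising simultaneously across all extremal $P$ from a single symmetric honest $\bm{T}$ must be a convex combination of the $v(G)$, $G \in \mathcal{G}_n$, and Rules 1--3 are the conjectured correct bookkeeping.

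The main obstacle, which I expect to be the real difficulty, is the global coupling between different extremal $P$. For any single extremal $P$ one should be able to induct on $n$: use the frugal-approximation result of Section \ref{sect:frugal} to replace $\bm{T}$ by a frugal tournament, peel off the lowest equivalence class of $P$ as the sitting-out group, and apply the inductive hypothesis to the sub-tournament on the remainder. What is delicate is that a single symmetric honest $\bm{T}$ must behave consistently at every extremal $P$ simultaneously, and the polytope decomposition of Section \ref{sect:maps} (that $\mathcal{A}_n$ is a finite union of convex polytopes) does not by itself pin these polytopes inside $\mathcal{A}^{*}_n$. My plan would be to strengthen the inductive hypothesis to track not only the win vector but also a record of how each coordinate changes as we move between adjacent extremal $P$ (i.e.\ the one-sided directional derivatives provided by Lemma \ref{lem:honesty}), and to interpret those increments as the arcs of a (possibly fractional) digraph in $\mathcal{G}_n$; turning this bookkeeping into a rigorous proof is where I expect the work to concentrate.
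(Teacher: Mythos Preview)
This statement is labelled a conjecture in the paper: only the inclusion $\mathcal{A}^{*}_n \subseteq \mathcal{A}_n$ is proven (as Theorem \ref{thm:nplayer}), while the reverse inclusion remains open for $n \geq 4$. For the direction the paper does prove, your sketch differs from the paper's argument and has a real gap. The recipe ``exclude one player, rank the rest by round-robin, then follow the outgoing arcs of $i$ in $G_\sigma$'' presupposes that the tournament knows which vertex $i$ of $G_\sigma$ corresponds to the excluded player; but a round-robin among the other $n-1$ players reveals only their ranks relative to one another, not where the excluded player sits in the full order on $[n]$, and the arcs out of vertex $i$ depend precisely on $i$. The paper's construction resolves this by fixing in advance a target matrix $P$ with all distinct off-diagonal entries and having each remaining player compare empirical win-fractions against $P$ to identify everyone's absolute rank (and hence, by elimination, that of the excluded player). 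The bulk of the paper's proof is then a lengthy case analysis, carried out separately for $n=4$ and $n\geq 5$, showing that the resulting ``token'' weights can never sum to more than $1$; Rules 1--3 by themselves do not make this automatic, contrary to what your last sentence of the first paragraph suggests.

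For the reverse inclusion your plan misapplies two results. Lemma \ref{lem:honesty} gives monotonicity of $\pi_k$ only in the entries $p_{kl}$ of its own row, not in $p_{ij}$ for $i,j \neq k$, so a linear functional $\sum_i a_i \pi_i$ need not be monotone in any coordinate of $P$, and its extrema over $\mathcal{D}_n$ need not be attained at $\{0,\frac12,1\}$-valued vertices. And there is no ``frugal-approximation result'' in Section \ref{sect:frugal}: Theorem \ref{thm:frugal} applies only to tournaments that are \emph{already} frugal, so invoking it to replace an arbitrary symmetric honest $\bm{T}$ by a frugal one assumes exactly what you are trying to prove.
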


Our main result in this section is

\begin{theorem}\label{thm:nplayer}
  $\mathcal{A}^{*}_n \subseteq \mathcal{A}_n$, for every $n \in \mathbb{N}$.   
\end{theorem}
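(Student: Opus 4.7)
Exactly as in the proof of Theorem \ref{thm:threeplayer}, the set $\mathcal{A}_n$ is closed under convex combinations of win vectors: mixing any two symmetric honest tournaments with probabilities $p$ and $1-p$ gives another such tournament, whose win vector is the corresponding convex combination. It therefore suffices to show that $v(G)\in\mathcal{A}_n$ for every corner $v(G)$ of $\mathcal{A}^*_n$, i.e.\ for every $G=G_\sigma\in\mathcal{G}_n$ produced by a permutation $\sigma\in\mathcal{S}_n$.

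For each such $G$ I would construct a frugal tournament $\bm{T}_{G,N}$ directly generalising $\bm{T}_{1,N}$ and $\bm{T}_{2,N}$. A player $c\in[n]$ is picked uniformly at random; the remaining $n-1$ players play $N$ rounds of round-robin among themselves; and the winner is then declared by a fair coin-flip between the two arc-destinations $(c,j_1),(c,j_2)$ of vertex $c$ of $G$. A loop $j_k=c$ sends the coin-flip to $c$ itself; a non-loop $j_k<c$ sends it to a specific non-$c$ player selected from the round-robin observations. Paired with a suitably chosen $P_G\in\mathcal{D}_n$ -- for instance, one whose off-diagonal entries lie in $\{\tfrac{1}{2},1\}$ in a pattern that lets the non-$c$ round-robin reveal the pairwise strengths of those players -- a direct calculation yields $\bm{wv}(\bm{T}_{G,N},P_G)\to v(G)$ as $N\to\infty$, because summed over the uniform choice of $c$ each arc contributes $\tfrac{1}{2n}$ to the win probability of its destination, precisely as in (\ref{eq:graphvector1}).

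The main obstacle is to design the rule selecting the non-loop target so that it is simultaneously $\sigma$-equivariant (ensuring symmetry), weakly monotone in each non-$c$ player's score (ensuring honesty via Lemma \ref{lem:honesty}), and produces the correct $v(G)$ in the limit. The naive rule ``the winner is the $j_k$-th ranked non-$c$ player'' already fails monotonicity: for an unfair $G$, a non-$c$ player whose optimal rank is not the best would benefit from throwing matches. The remedy, exactly as in $\bm{T}_{1,N}$, is to let the rule depend on a label-invariant statistic of the observations (such as the pattern of ``dominant'' pairs that appear in non-$c$ when $p_{ij}=1$) so that at every state of the round-robin the effective target set is downward closed in rank and hence gaining wins cannot hurt. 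Devising such a rule for every corner digraph $G$, together with an appropriate $P_G$ whose statistics reveal themselves with probability tending to $1$, and then verifying all three properties, constitutes the technical core of the argument; the frugality of $\bm{T}_{G,N}$ (the chosen $c$ never plays) automatically disposes of the honesty check for $c$ and reduces Lemma \ref{lem:honesty} to an assertion about the non-$c$ sub-population.
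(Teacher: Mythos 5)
Your overall strategy matches the paper's in outline: reduce to the corner vectors $v(G)$ by taking probabilistic mixtures of tournaments (as in the proof of Theorem \ref{thm:threeplayer}), and realise each $v(G)$ as the limiting win vector of a frugal tournament in which the eliminated player's $\tfrac1n$ of probability is split according to the two outgoing arcs of the eliminated vertex, with the non-loop destinations located via an $N$-fold round-robin among the survivors. However, the proposal stops exactly where the real proof begins. The rule that decides who receives the redistributed probability must be (a) well defined for \emph{every} possible sequence of match results, not just the typical ones under your chosen $P_G$, (b) symmetric, and (c) honest for \emph{every} $P\in\mathcal{M}_n$ and at every intermediate state of the round-robin. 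You correctly observe that the naive rank-based rule violates honesty, but your proposed remedy (``a label-invariant statistic \dots so that the effective target set is downward closed in rank'') is never constructed or verified, and you yourself label it ``the technical core of the argument.'' That core is the content of the theorem; as it stands the proof has a genuine gap precisely there. (A small additional slip: honesty cannot be ``ensured via Lemma \ref{lem:honesty}'' — that lemma deduces monotonicity of $\pi_k$ in $p_{kl}$ \emph{from} honesty; honesty itself has to be checked directly from the definition, conditionally on each state.)

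For comparison, the paper's resolution is to build a fixed reference matrix $P$ with pairwise well-separated distinct entries into the rules, let each surviving player try to identify the other $n-2$ survivors by matching observed win frequencies to an $(n-2)\times(n-2)$ submatrix of $P$ within the tolerance $\varepsilon$ of (\ref{eq:eps}), and award that player a token of weight $\tfrac{n_{ji}}{2}$ only if, in addition, his own win fractions clear the one-sided thresholds $r_l>p_{il}-\varepsilon$; the eliminated player receives the residual weight, and the champion is drawn proportionally to tokens. The one-sidedness of the threshold makes a player's own wins only help him, which is what gives honesty for arbitrary $P$ and arbitrary states, and frugality handles the eliminated player as you note. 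The heart of the proof is then showing the scheme is well defined, i.e.\ the awarded token weights can never sum to more than one no matter how the matches fall: this requires a case analysis over the possible mutual (mis)identifications of the token holders, uses $n\ge 5$ so that two token holders share at least two common observed opponents, and needs a separate, more delicate argument for $n=4$ under the extra ordering hypothesis (\ref{eq:eps4}). Some mechanism of this kind — or a genuinely different one, fully specified and verified — is what your proposal still has to supply.
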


\begin{proof}
  We've already observed that $\mathcal{A}^{*}_{n} = \mathcal{A}_n$ for
  $n=1, 2, 3$. We divide the remainder of the proof into two cases.
  \\
  \\
    {\sc Case I:} $n \geq 5$. Since we can form a
    ``convex combination of tournaments'' - see the proof of Theorem
    \ref{thm:threeplayer} - it suffices to find, for any fixed
    $P \in \mathcal{D}_n$ and for each $G \in \mathcal{G}_n$, a
    sequence $\bm{T}_{G, \, N}$ of symmetric and honest tournaments such that
    $\bm{wv}((\bm{T}_{G,\,N}, \, P)) \rightarrow v(G)$ as
    $N \rightarrow \infty$.
    \par Let $P = (p_{ij})$ be any doubly monotonic matrix such that
    $p_{ij}\neq p_{kl}$ unless either $i=k, \, j=l$ or $i=j, \, k=l$.
    The matrix $P$ is
    henceforth fixed. Let
    \begin{equation}\label{eq:eps}
      \varepsilon_1 := \min_{i \neq j} {\hbox{$| p_{ij} - \frac{1}{2}|$}},
      \;\;\; \varepsilon_2 := \min_{\stackrel{i \neq j, \, k \neq l,}{\{i,\,j\} \neq \{k,\,l\}}} |p_{ij} - p_{kl}|, \;\;\;
      \varepsilon := \frac{1}{2} \min \{ \varepsilon_1, \, \varepsilon_2 \}.
    \end{equation} 
    In other words, $\varepsilon$ is half the minimum difference between two
    distinct numbers appearing in the matrix $P$.
    \par For $N \in \mathbb{N}$ and $G \in \mathcal{G}_n$, the rules of the
    tournament $\bm{T}_{G,\,N}$ are as follows. We remark that the matrix $P$
    here is a fixed parameter as part of the rules and does not (necessarily)
    have anything to do with the specialization. In due course we will, however,
    also have reason to consider the specialization $(\bm{T}_{G, \, N}, \, P)$.
    \\
    \\
    \emph{Step 1:} Present the matrix $P$ to each of the players.
    \\
    \\
    \emph{Step 2:} Choose one of the players uniformly at random. This player
    takes no further part in the tournament. 
    \\
    \\
    \emph{Step 3:} The remaining $n-1$ players play $N$ iterations of
    round-robin.
    
    Once all the matches are finished, each remaining player performs a sequence
    of tasks{\footnote{One can instead imagine that there is a ``referee'' who performs all these tasks, since they are part of the rules for the tournament. We think it's intuitively easier to understand the idea, however, in terms of each player perfoming his own calculations. Note that Step 1 can be removed from the description of the rules if we formulate them in terms of a central referee.}}
    which is a little technical to describe. Informally, he tries to establish
    the identities of the other $n-2$ remainers, as elements from $[n]$, by
    checking the results of all the matches not involving himself and comparing
    with the given matrix $P$. More formally, he does the following:
    \par (a) He makes an arbitrary list
    $(t_1,\,t_2,\,\dots,\,t_{n-2})$ of the other $n-2$
    remainers and computes the elements $q_{ij}$ of an $(n-2) \times (n-2)$
    matrix such that $q_{ij}$ is the fraction of the matches between $t_i$ and
    $t_j$ which were won by $t_i$.
    \par (b) He tries to find a subset $\{u_1,\,\dots,\,u_{n-2}\} \subset [n]$
    such that, for all $1 \leq i < j \leq n-2$,
    \begin{equation}\label{eq:success}
      |q_{ij} - p_{u_i, \, u_j}| < \varepsilon.
    \end{equation}
    Note that, by (\ref{eq:eps}), he can find at most one such
    $(n-2) \times (n-2)$ submatrix of $P$. If he does so, we say that he
    \emph{succeeds} in Step 3.
    \\
    \\
    \emph{Step 4:} For each player that succeeds in Step 3, do the following:
    \par (a) Let $i < j \in [n]$ be the numbers of the
    two rows and columns in $P$ which are excluded from the submatrix he
    identified in Step 3.
    \par (b) For each $l \in [n] \backslash \{i,\,j\}$, compute the fraction
    $r_l$ of matches which he won against the player whom he identified in
    Step 3 with row $l$ of the matrix $P$.
    \par (c) If $r_l > p_{il} - \varepsilon$ for every $l$, then assign this
    player a ``token'' of weight $\frac{n_{ji}}{2}$, where $n_{ji}$ is the number
    of arcs from $j$ to $i$ in the digraph $G$.
    \\
    \\
    \emph{Step 5:} Assign to the player eliminated in Step 2 a token of weight
    $1 - s$, where $s$ is the sum of the weights of the tokens distributed in
    Step 4. The winner of the tournament is now chosen at random, weighted in
    accordance with the distribution of tokens.
    \\
    \par What needs to be proven now is that the tournament $\bm{T}_{G,\,N}$ is
    always well-defined, that is, it can never happen that the total weight of
    the tokens distributed in Step 4 exceeds one. Supposing for the moment that
    this is so, it is clear that the tournament is symmetric and honest, and it
    is also easy to see that
    $\bm{wv}((\bm{T}_{G,\,N}, \, P)) \rightarrow v(G)$ as
    $N \rightarrow \infty$. For if the relative strengths of the $n$ players
    are, \emph{in fact}, given by the matrix $P$ then, as
    $N \rightarrow \infty$, with high probability everyone not eliminated in
    Step 2 will succeed with identifying an $(n-2) \times (n-2)$ submatrix of
    $P$ in Step 3, namely the submatrix corresponding to the \emph{actual}
    rankings of these $n-2$ remainers, and will then have performed well enough
    to be assigned a token in Step 4(c) if and only if their actual ranking is
    higher than that of the player eliminated in Step 2 (note that the weight
    of the token they are assigned will still be zero if there is no
    corresponding arc in the digraph $G$).
    \\
    \par So it remains to prove that the total weight of all tokens assigned in
    Step 4(c) can never exceed one. If at most one player is assigned a token
    of non-zero weight then we're fine, because of Rule 1 in the definition of
    the family $\mathcal{G}_n$. Suppose at least two players are assigned
    tokens of non-zero weight. Let $A,\,B,\,C,\,D,\,\dots$ denote all the
    players not eliminated in Step 2 (these are just letters, not
    numbers) and suppose $A$ and $B$ are assigned non-zero-weight tokens. Since
    each of $A$ and $B$ can see the results of all matches involving
    $C,\,D,\,\dots$, they will identify these with the same $n-3$ elements of
    $[n]$ in Step 3. Note that here we have used the fact that $n \geq 5$. Let
    $\mathcal{S} \subset [n]$ be this $(n-3)$-element subset. This
    leaves three indices $i < j < k \in [n] \backslash \mathcal{S}$.
    We have four options to consider:
    \\
    \\
    \emph{Option 1:} At least one of $A$ and $B$ identifies the other as $k$.
    We show this can't happen. Suppose $A$ identifies $B$ as $k$. Then $B$
    must have performed at about the level expected of $k$ against each of
    $C,\,D,\,\dots$. More precisely, for any 
    $l \in \mathcal{S}$,
    \begin{equation}\label{eq:option1a}
      |r^{B}_{l} - p_{kl}| < \varepsilon.
    \end{equation}
    On the other hand,
    the rules of Step 4 imply that, for $B$ to receive a token, he must have
    performed at least at the level expected of $j$ against each of
    $C,\,D,\,\dots$ (and, indeed, at the level expected of $i$ in the case that
    he failed to identify $A$ as $i$). Precisely, for each $l \in \mathcal{S}$, 
    \begin{equation}\label{eq:option1b}
      r^{B}_{l} > p_{jl} - \varepsilon.
    \end{equation}
    But (\ref{eq:option1a}) and (\ref{eq:option1b}) contradict (\ref{eq:eps}).
    \\
    \\
    \emph{Option 2:} $A$ and $B$ identify one another as $j$. We show that this
    can't happen either. Suppose otherwise. Since $A$ gets a token, it must
    pass the test $r^{A}_{j} > p_{ij} - \varepsilon$. Similarly
    $r^{B}_{j} > p_{ij} - \varepsilon$. But $r^{A}_{j} + r^{B}_{j} = 1$, since
    each of $A$ and $B$ is here computing the fraction of matches it won
    against the other. This implies that
    $p_{ij} < \frac{1}{2} + \varepsilon$, which contradicts (\ref{eq:eps}).
    \\
    \\
    \emph{Option 3:} Each of $A$ and $B$ identifies the other as $i$. Then the
    weight of the token assigned to each is $\frac{n_{kj}}{2}$. But $j > 1$ so
    $n_{kj} \leq 1$, by Rule 3 for the family $\mathcal{G}_n$. Hence it suffices
    to prove that no other player receives a token. Suppose $C$ receives a
    token. $C$ sees the results of matches involving either $A$ or $B$ and any
    of $D,\,\dots$. Since $A$ and $B$ have already identified one another as
    $i$, then $C$ must make the same identification for each, by (\ref{eq:eps}).
    In other words, $C$ cannot distinguish $A$ from $B$, a contradiction.
    \\
    \\
    \emph{Option 4:} $A$ and $B$ identify one another as $i$ and $j$, in some
    order. Since both get non-zero-weight tokens, there must, by Rules 1-3, be
    exactly one arc in $G$ from $k$ to each of $i$ and $j$. So the sum of the
    weights assigned to $A$ and $B$ equals one, and there is no arc in $G$ from
    $k$ to any vertex other than $i$ and $j$. It now suffices to show that no
    other player $C$ receives a positive weight token. The only way $C$ can
    succeed in Step 3 is if it also identifies $A$ and $B$ as $i$ and $j$, and
    if there is some $l \neq k$ such that it identifies
    $\{C, \, Z\} = \{k, \, l\}$, where $Z$ is the player eliminated in Step 2.
    Both $A$ and $B$ must in turn have identified $C$ as $l$. If $k < l$ this
    means that $C$ cannot have played sufficiently well to obtain a token in
    Step 4(c). If $l < k$ then even if $C$ gets a token it will have weight
    zero, since there is no arc in $G$ from $k$ to $l$.
    \\
    \\
      {\sc Case II:} $n=4$. We use the same tournaments $\bm{T}_{G,\,N}$ as in
      {\sc Case I}, but in order to ensure their well-definedness we require,
      in addition to (\ref{eq:eps}), the following conditions on the
      $4 \times 4$ doubly monotonic matrix $P = (p_{ij})$:
      \begin{equation}\label{eq:eps4}
        p_{14} > p_{24} > p_{34} > p_{13} > p_{12} > p_{23}.
      \end{equation}
      Intuitively, player $4$ is useless, while the gap between $1$ and $2$ is
      greater than that between $2$ and $3$. To prove well-definedness, it
      suffices to establish the following two claims:
      \\
      \\
      \emph{Claim 1:} If some player receives a token of weight one, then no
      other player receives a token of positive weight.
      \\
      \\
      \emph{Claim 2:} It is impossible for three players to receive positive
      weight tokens.
      \\
      \par Let $D$ denote the player eliminated in Step 2 and $A,\,B,\,C$ the
      three remainers.
      \\
      \\
      \emph{Proof of Claim 1.} Suppose $A$ receives a token of weight one. The
      rules for $\mathcal{G}_n$ imply that $A$ must identify himself as $1$ and
      there are two arcs in $G$ from $j$ to $1$, where $j$ is the identity
      which $A$ assigns to $D$. We consider two cases.
      \par Case (a): $j = 4$. Suppose, by way of contradiction, that $B$ also
      receives a positive weight token. In order to obtain a token at all, $B$
      cannot have identified himself as $1$, because he has lost more than half
      his matches against $A$. Hence there is no arc in $G$ from $4$ to
      whomever $B$ identifies himself as, so $B$ cannot have identified $D$ as
      $4$. Since $A$ also beat $C$, it must be the case that $B$ identifies
      $C = 4$, $A=1$, $B=2$, $D=3$. But for $B$ to receive a token, he must
      then have won at least $p_{24} - \varepsilon$ of his matches against $C$.
      This contradicts $A$:s identification $\{B, \, C\} = \{2, \, 3\}$, since
      the latter
      would mean that the fraction of matches $B$ won against $C$ was at most
      $p_{23} + \varepsilon$.
      \par Case (b): $j \in \{2,\,3\}$. $A$ must have identified some remainer
      as $4$, say $C$, and then won at least a fraction $p_{14} - \varepsilon$
      of their matches. $C$:s performance against $A$ is so bad that he cannot
      possibly receive a token. Moreover, $B$ observes this and hence must also
      identify $A = 1$, $C = 4$. So if $B$ receives a token, he will have
      agreed with $A$ on the identities of all four players. But then his token
      cannot have positive weight, since there are no more arcs emanating from
      $j$.
      \\
      \\
      \emph{Proof of Claim 2.} Suppose each of $A, \, B, \, C$ receives a
      token. Since $p_{34} > p_{13}$ by (\ref{eq:eps4}), each must identify
      $D = 4$. This is because if anyone has identified you as $4$, you are
        so bad that you can never satisfy the condition to get a token. We
      consider three cases.
      \par Case (a): Someone, say $A$, identifies themselves as $1$. Then,
      without loss of generality, they identify $B = 2$, $C = 3$. Since $A$
      gets a positive weight token, he must at least have won more than half
      of his matches against both $B$ and $C$. Hence, neither $B$ nor $C$ can
      self-identify as $1$ and get a token. Since there are at most two arcs
      emerging from $4$, $B$ and $C$ must identify themselves as the same
      number, one of $2$ and $3$. But $B$ observes the matches between $A$ and
      $C$ and, since $A$ got a token, he won at least a fraction
      $p_{13} - \varepsilon$ of these. Thus $B$ must self-identify as $2$, hence
      so does $C$. But $C$ observes the matches between $A$ and $B$, of which
      $A$ won a majority, hence $C$ must identify $A$ as $1$. But then $C$
      cannot get a token, since he lost at least a fraction
      $p_{13} - \varepsilon$ of his matches against $A$.
      \par Case (b): Nobody self-identifies as $1$, and someone lost at least
      half of their matches against each of the other two. WLOG, let $A$ be
      this ``loser''. The only way $A$ can get a token is if he self-identifies
      as $3$. WLOG, he identifies $B = 1$, $C = 2$. To get a token he must have
      won at least a fraction $p_{32} - \varepsilon$ of his matches against $C$.
      But $C$ beat $A$ and $B$ didn't self-identify as $1$, hence $B$ must have
      identified $C=1$, which means that $C$ won at least a fraction
      $p_{12} - \varepsilon$ of his matches against $A$. This contradicts
      (\ref{eq:eps}), given the additional assumption that
      $p_{12} > p_{23}$ in (\ref{eq:eps4}).
      \par Case (c): Nobody self-identifies as $1$, and everyone beat someone
      else. Without loss of generality, $A$ beat $B$, who beat $C$ who beat
      $A$. First suppose someone, say $A$, self-identifies as $2$. Then he
      must identify $B = 1$, $C=3$. But then he would have to have beaten $C$
      to get a token, a contradiction.
      \par So, finally, we have the possibility that each of
      $A,\,B,\,C$ self-identifies as $3$. Thus each identifies the other two as
      $1$ and $2$, which means that in each pairwise contest, the fraction of
      matches won by the winner lies in the interval
      $(p_{12} - \varepsilon, \, p_{12} + \varepsilon)$.
      Let $r_{CA}$ denote the fraction of matches won by $C$ against $A$.
      Since $C$ beat $A$, the previous analysis implies that 
      $r_{CA} > p_{12} - \varepsilon$. But
      $A$ identifies himself as $3$ and $B$ beat $C$, so he
      must identify $C$ as $2$. Since $A$ gets a token, we must have
      $r_{CA} < p_{23} + \varepsilon$. But these two inequalites for
      $r_{CA}$ contradict (\ref{eq:eps4}) and (\ref{eq:eps}).
      
\end{proof}

\begin{corollary}\label{cor:unfairn}
  $\mathcal{F}_n$ is a proper subset of $\mathcal{A}_n$, for all $n \geq 3$.
\end{corollary}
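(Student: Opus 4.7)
The plan is to prove this by invoking Theorem \ref{thm:nplayer} and exhibiting a single explicit digraph $G \in \mathcal{G}_n$ whose associated vector $v(G)$ lies outside $\mathcal{F}_n$. Combined with the inclusion $\mathcal{F}_n \subseteq \mathcal{A}^{*}_n$ noted earlier in the paper (so that $\mathcal{F}_n \subseteq \mathcal{A}^{*}_n \subseteq \mathcal{A}_n$), this is enough to conclude proper inclusion.

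The explicit construction is a direct generalisation of the $(2,3,1)$ row of Table \ref{tab:corners}. For $n = 3$, the digraph $G_0 \in \mathcal{G}_3$ consisting of two loops at vertex $1$, two loops at vertex $2$, one loop at vertex $3$, and one arc from $3$ to $2$ satisfies Rules 1--3 and gives $v(G_0) = \left(\tfrac{1}{3}, \tfrac{1}{2}, \tfrac{1}{6}\right) \notin \mathcal{F}_3$. For $n \geq 4$, I would extend $G_0$ to a digraph $G_n \in \mathcal{G}_n$ by placing two loops at each vertex $i \in \{4, \ldots, n\}$. Rule 1 is satisfied (outdegree $2$ at every vertex), Rule 2 holds trivially for loops, and Rule 3 is satisfied because the only repeated destinations are loops. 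Counting in-degrees gives $\mathrm{indeg}(1) = 2$, $\mathrm{indeg}(2) = 3$, $\mathrm{indeg}(3) = 1$, and $\mathrm{indeg}(i) = 2$ for $i \geq 4$, so
\begin{equation*}
v(G_n) \;=\; \left(\tfrac{1}{n},\, \tfrac{3}{2n},\, \tfrac{1}{2n},\, \tfrac{1}{n},\, \ldots,\, \tfrac{1}{n}\right).
\end{equation*}
Since the first coordinate is strictly less than the second, $v(G_n) \notin \mathcal{F}_n$.

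By Theorem \ref{thm:nplayer}, $v(G_n) \in \mathcal{A}^{*}_n \subseteq \mathcal{A}_n$. Thus $\mathcal{A}_n \setminus \mathcal{F}_n$ is non-empty while $\mathcal{F}_n \subseteq \mathcal{A}_n$, establishing the strict inclusion for every $n \geq 3$.

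There is no real obstacle here: the argument is purely combinatorial once Theorem \ref{thm:nplayer} is in hand. The only point worth being careful about is verifying that the constructed $G_n$ actually lies in $\mathcal{G}_n$ (i.e.\ satisfies Rules 1--3), and that the violation of fairness persists in the extended digraph --- both of which are immediate from the computation above.
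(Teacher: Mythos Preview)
Your proof is correct and follows essentially the same approach as the paper: show $\mathcal{F}_n \subsetneq \mathcal{A}^{*}_n$ and then invoke Theorem~\ref{thm:nplayer}. The paper's own proof simply asserts that it is ``easy to see that $\mathcal{F}_n$ is a proper subset of $\mathcal{A}^{*}_{n}$'' without exhibiting a witness, whereas you make this explicit by writing down a concrete digraph $G_n \in \mathcal{G}_n$ (the natural extension of the $(2,3,1)$ entry in Table~\ref{tab:corners}) and verifying Rules 1--3 and the in-degree count directly; otherwise the arguments are identical.
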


\begin{proof}
  It is easy to see that $\mathcal{F}_n$ is a proper subset of
  $\mathcal{A}^{*}_{n}$, for each $n \geq 3$. Then apply Theorem
  \ref{thm:nplayer}.
\end{proof}

Given the preceding results, we now return to the consideration of the
  maximum and minimum winning probabilities, $\Pi^{i,\,n}$ and $\Pi_{i,\,n}$
  respectively, attainable by each player $i$. If we want to minimize the first
coordinate in a vector $v(G)$, there should be no arc pointing to $1$ from any
$j> 1$, and just the two loops from $1$ to itself. In that case,
$v_1 (G) = \frac{1}{n}$. For $i \geq 2$, in order to maximize the $i$:th
coordinate of $v(G)$, it is clear that the digraph $G$ should
\par - have one arc from $j$ to $i$, for each $j = i+1,\,\dots,\,n$,
\par - have two loops $(i,\,i)$,
\par - hence, have no arc from $i$ to $k$, for any $k < i$.
\\
For such $G$ we'll have $v_i (G) = \frac{{\hbox{indeg}}_{i}(G)}{2n} =
\frac{n-i+2}{2n} = \frac{1}{2} - \frac{i-2}{2n}$.
Hence, by Theorem
\ref{thm:nplayer}, we have
\begin{equation}
  \Pi_{1,\,n} \leq \frac{1}{n}; \;\;\;\;\;\; \Pi^{i,\,n} \geq \frac{1}{2} - \frac{i-2}{2n}, \; i = 2, \dots, n.
\end{equation}
If Conjecture \ref{conj:nplayer} were true, we'd have equality everywhere.
Note that, by Proposition \ref{prop:upperbd}, we do indeed have the equality
$\Pi^{2,\,n} = \frac{1}{2}$, and by Proposition \ref{prop:generalbuffnerf},
$\Pi^{3, 4} = \frac38$. Other than this, we can't prove
a single outstanding equality for any $n \geq 4$. In particular, for every
$n \geq 4$ it remains open whether $\Pi_{1,\,n} = \Pi^{n,\,n} = \frac{1}{n}$.

Next, we determine the exact number of corners in
$\mathcal{A}^{*}_{n}$:

\begin{proposition}\label{prop:corners}
  There are $\frac{3^{n-1} + 1}{2}$ corners in the convex
  polytope $\mathcal{A}^{*}_{n}$.
\end{proposition}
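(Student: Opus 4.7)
The plan is to establish the recursion $C_n = 3 C_{n-1} - 1$ with base case $C_1 = 1$, where $C_n$ denotes the number of corners of $\mathcal{A}^{*}_n$; solving this recursion immediately yields $C_n = (3^{n-1}+1)/2$. For any $\sigma \in \mathcal{S}_n$, let $\sigma'$ denote the permutation of $\{1, \ldots, n-1\}$ obtained by deleting $n$ from $\sigma$. Since the greedy procedure defining the outgoing arcs at a vertex $i < n$ depends only on the relative order of $\{1, \ldots, i\}$ in $\sigma$, which coincides with that in $\sigma'$, the digraph $G_\sigma$ is uniquely determined by the pair $(G_{\sigma'}, a_n)$, where $a_n$ is the multiset of outgoing arcs at vertex $n$. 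Counting corners thus reduces to counting the distinct such pairs arising as $\sigma$ ranges over $\mathcal{S}_n$.

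Writing $p := \sigma'_1$ and $q := \sigma'_2$, a direct calculation shows how $a_n$ depends on the position at which $n$ is inserted into $\sigma'$: position $1$ gives $a_n = \{n, n\}$; position $2$ gives $a_n = \{p, n\}$ (which degenerates to $\{1, 1\}$ when $p = 1$); and any position $\geq 3$ gives $a_n = \{p, q\}$ (again $\{1, 1\}$ when $p = 1$). Hence for fixed $\sigma'$, the set of achievable $a_n$ values has cardinality $2$ when $p = 1$ (namely $\{\{n, n\}, \{1, 1\}\}$) and cardinality $3$ when $p \neq 1$ (namely $\{\{n, n\}, \{p, n\}, \{p, q\}\}$; these three are pairwise distinct since $p, q \in \{1, \ldots, n-1\}$ and $p \neq q$).

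The central technical step is to verify that the set of achievable $a_n$'s depends on $\sigma'$ only through $G_{\sigma'}$. This requires a recovery lemma: whenever $\sigma'_1 \neq 1$, both $p = \sigma'_1$ and $q = \sigma'_2$ can be read off from $G_{\sigma'}$. For $p$, observe that $\sigma'_1 = p$ forces the outgoing-arc configuration at vertex $p$ to be the self-pair $\{p, p\}$ by the greedy rule, while for any $p' > \sigma'_1$ the top element of $\{1, \ldots, p'\}$ in $\sigma'$ is $\sigma'_1 \neq p'$, so vertex $p'$ cannot have configuration $\{p', p'\}$. Thus $p$ equals the maximum index in $\{1, \ldots, n-1\}$ with outgoing arcs $\{p, p\}$ in $G_{\sigma'}$, and $p = 1$ corresponds exactly to $G_{\sigma'}$ being the unique ``all arcs to $1$'' digraph. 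Having identified $p$: if $p < n-1$, then the outgoing arcs at vertex $n-1$ in $G_{\sigma'}$ are $\{p, q\}$, so $q$ is simply the non-$p$ element; if $p = n-1$, then $q \leq n-2$, and the sub-digraph of $G_{\sigma'}$ on $\{1, \ldots, n-2\}$ coincides with the corresponding digraph for the reduced permutation obtained by deleting $p$ from $\sigma'$, whence $q$ is found by applying the same self-loop argument to this sub-digraph.

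Combining these observations, exactly one of the $C_{n-1}$ equivalence classes of $G_{\sigma'}$ (the all-arcs-to-$1$ class) contributes $2$ distinct corners, while each of the remaining $C_{n-1} - 1$ contributes $3$; this yields $C_n = 2 + 3(C_{n-1} - 1) = 3 C_{n-1} - 1$. The main obstacle is the recovery of $(\sigma'_1, \sigma'_2)$ from $G_{\sigma'}$, which relies on the self-loop characterization together with an inductive descent in the case $p = n-1$. A secondary but necessary verification is that distinct pairs $(G_{\sigma'}, a_n)$ actually yield distinct win-vectors $v(G_\sigma)$, rather than merely distinct digraphs; this follows from analysing how $a_n$ contributes to the indegrees of its destinations, using again the separation provided by the cases $a_n = \{n,n\}$, $\{p,n\}$, $\{p,q\}$.
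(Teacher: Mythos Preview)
Your decomposition via deleting the top vertex $n$ is a genuinely different route from the paper's. The paper instead identifies, for each $\sigma\in\mathcal{S}_n$, a canonical \emph{subsequence} $\sigma'$ (the vertices receiving at least one incoming arc, in their order of appearance in $\sigma$), proves directly that the map $\sigma'\mapsto v(G_{\sigma'})$ is injective, and then counts such subsequences by the same recursion $a_n=3a_{n-1}-1$. Your approach and the paper's arrive at the same recursion but organise the work differently: the paper front-loads the injectivity argument once and for all, whereas you try to carry it through the induction.

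The gap in your argument is precisely there. Your recovery lemma shows that $p=\sigma'_1$ and $q=\sigma'_2$ can be read off from the \emph{digraph} $G_{\sigma'}$ (via the maximal double self-loop), and this correctly implies that the set of achievable $a_n$'s depends only on $G_{\sigma'}$. But the quantity $C_{n-1}$ in your recursion is the number of distinct \emph{vectors} $v(G_{\sigma'})$, not the number of distinct digraphs, and your ``secondary verification'' likewise asks that distinct pairs $(G_{\sigma'},a_n)$ yield distinct \emph{vectors} $v(G_\sigma)$. Both steps tacitly require that the map $G\mapsto v(G)$ is injective on the corner digraphs. Knowing the multiplicity of $n$ in $a_n$ from $v_n(G_\sigma)$ is easy, but in the cases $a_n=\{p,n\}$ or $a_n=\{p,q\}$ you must recover $p$ (and then $q$) from the indegree sequence alone, not from the digraph. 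This can be done --- for instance, one can show that $p$ is the largest index $j$ with $\mathrm{indeg}_j(G_\sigma)\ge n+2-j$, using that the top-priority vertex $p$ receives an arc from every $j\ge p$ while any $j>p$ receives at most $n-j$ arcs in $G_{\sigma'}$ --- but this is the substantive part of the proof, and ``analysing how $a_n$ contributes to the indegrees'' does not by itself supply it. In effect you have relocated, rather than eliminated, the paper's injectivity argument; to make your induction go through you should state and prove as part of the inductive hypothesis that distinct corner digraphs on $[m]$ have distinct indegree sequences.
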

\begin{proof}
  We must determine the number of elements in the range of the
  function $f : \mathcal{S}_n \rightarrow \mathcal{P}_n$ defined earlier.
  We begin by noting that, in the encoding $f(\sigma)=v(G_\sigma)$,
  we may not need to know
  the entire permutation $\sigma$ in order to construct $G_\sigma$. In
  particular, it suffices to know the subsequence $\sigma'$ of all vertices
  that get assigned incoming arcs. We note that a vertex $i$ has no incoming
  arcs in $G_\sigma$ if and only if it is either preceded by two
  lower-numbered vertices or preceded by the vertex $1$. Therefore, any such
  subsequence
  $\sigma'$ is a sequence of distinct elements in $[n]$ that $(i)$ ends with a
  $1$ and $(ii)$ for any $i$, at most one of
  $\sigma'_1, \sigma'_2, \dots, \sigma'_{i-1}$ is smaller than $\sigma'_i$.
  Conversely, any sequence $\sigma'$ that satisfies $(i)$ and $(ii)$ can
  be extended to a permutation $\sigma$, without affecting which vertices get
  incoming arcs, by putting the missing numbers after
  the '$1$'. Hence the
  possible subsequences $\sigma'$ are characterized by $(i)$ and $(ii)$.

  We claim that the map
  $\sigma'\mapsto v(G_{\sigma'})$ is injective.
  Let $\sigma'$, $\sigma''$ be two distinct such sequences and pick $k$ such
  that $\sigma'_1 = \sigma''_1, \dots, \sigma'_{k-1} = \sigma''_{k-1}$
  and $\sigma'_{k} \neq \sigma''_{k}$, say $\sigma'_{k} < \sigma''_{k}$. To
  prove injectivity it suffices, by (\ref{eq:graphvector1}),
  to show that the vertex $\sigma''_{k}$ 
  has higher indegree in $G_{\sigma''}$
  than in $G_{\sigma'}$. We consider two cases:
  \par Case 1: $\sigma''_{k}$ does not appear at all in the subsequence
  $\sigma'$. Then, simply by how these subsequences were defined,
  $\sigma''_{k}$ has indegree zero in $G_{\sigma'}$ and strictly positive
  indegree in $G_{\sigma''}$.
  \par Case 2: $\sigma''_{k} = \sigma'_{l}$ for some $l > k$. Since
  $\sigma'_k < \sigma''_k$, property
  $(ii)$ applied to $\sigma'$ implies that
  $\sigma'_j = \sigma''_j > \sigma''_k$ for
  every $j = 1,\,\dots,\,k-1$. Hence, in $G_{\sigma''}$, the vertex $\sigma''_k$
  will retain both of its loops, whereas in $G_{\sigma'}$ there will be one arc
  from $\sigma''_k$ to $\sigma'_k$. Moreover, since $\sigma'$ and
  $\sigma''$ agree before the appearance of $\sigma''_k$, which then appears
  first in $\sigma''$, if $v \in [n]$ is any vertex that sends an arc to
  $\sigma''_k$ in $G_{\sigma'}$, then it will send at least as many arcs
  to $\sigma''_k$ in $G_{\sigma''}$. Hence, the total indegree of $\sigma''_k$ will
  be strictly higher in $G_{\sigma''}$ than in $G_{\sigma'}$, as desired. 

  It remains to count the number of sequences $\sigma'$ that satisfy properties
  $(i)$ and $(ii)$. Denote this by $a_n$. Given such a sequence of elements in
  $[n-1]$, we construct a sequence in $[n]$ by either $(1)$ doing nothing, $(2)$
  placing $n$ first in the sequence, or $(3)$ inserting $n$ between the first
  and second element - this is possible for all sequences except the one just
  consisting of a '$1$'. Thus for any $n\geq 2$, we have $a_n = 3a_{n-1}-1$. It
  is easy to check that $a_1=1$ and thus it follows by induction that
  $a_n= \frac{3^{n-1} + 1}{2}$ as desired.
\end{proof}

We close this section by posing a natural question which arises from the
previous discussion, but which remains unknown to us:

\begin{question}\label{quest:boundary}
  For each $n \geq 3$, which boundary points of $\mathcal{A}^{*}_{n}$ are
  achievable ?
\end{question}

  \section{Frugal tournaments}\label{sect:frugal}

  A central idea of the unfair tournaments presented in Sections
  \ref{sect:threeplayer} and \ref{sect:nplayer} is to first
  choose one player uniformly at random to exclude from participation. This
  player won't take part in any matches, though he might still win the
  tournament. Let us call a
  tournament with this property \emph{frugal}, as the organizers won't have
  to pay the attendance costs for one of the players. In the proof of Theorem
  \ref{thm:nplayer}, we constructed symmetric, honest and frugal tournaments
  whose win vector can attain any interior point in $\mathcal{A}_n^*$ for any
  $n\geq 4$. We will now show that,
  under the restriction that the tournament is frugal, nothing outside of
  $\mathcal{A}^{*}_{n}$ can be achieved.
  
\begin{theorem}\label{thm:frugal}
  Let $\bm{T}$ be a symmetric, honest and frugal $n$-player tournament for any
  $n\geq 2$. Then for any $P\in\mathcal{D}_n$,
  $\bm{wv}((\bm{T},\,P))\in\mathcal{A}_n^*$.
\end{theorem}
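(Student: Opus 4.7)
The plan is to reduce the claim to a family of linear inequalities on $\bm{wv}$ and then to prove each of these inequalities by combining the symmetry and honesty of $\bm{T}$ with the uniform frugal exclusion.

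First, I would use frugality to write $\bm{wv}((\bm{T},P))=\frac{1}{n}\sum_{u\in[n]}\nu^{(u)}(P)$, where $\nu^{(u)}(P)\in\mathcal{P}_n$ is the conditional distribution over $[n]$ of the eventual winner given that player~$u$ was the one removed in Step~$1$. By the symmetry of $\bm{T}$, all the $\nu^{(u)}$ are equivariant images of a single sub-process acting on the restriction $P^{(u)}$ of $P$ to $[n]\setminus\{u\}$, and this sub-process inherits honesty from $\bm{T}$ through Lemma~\ref{lem:honesty}.

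Next, I would observe that $\mathcal{A}_n^*$ equals the image, under the linear map sending an $n\times n$ matrix to $\tfrac{1}{n}$ times its vector of column sums, of the polytope
\begin{equation*}
\mathcal{B}_n^*=\Bigl\{B\in\mathbb{R}_{\geq 0}^{n\times n}:\ B_{uj}=0\text{ if }j>u,\ B_{uj}\leq\tfrac{1}{2}\text{ if }j\notin\{1,u\},\ \textstyle\sum_j B_{uj}=1\Bigr\}.
\end{equation*}
This holds because the extreme points of each row polytope of $\mathcal{B}_n^*$ are precisely $e_1$, $e_u$, and the midpoints $\tfrac{1}{2}(e_{j_1}+e_{j_2})$ with $j_1\neq j_2\in[u]$, matching exactly the two-arc choices from vertex~$u$ allowed by Rules~1--3. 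Hence $\bm{wv}\in\mathcal{A}_n^*$ is equivalent to the existence of a matrix $B\in\mathcal{B}_n^*$ whose column sums equal $n\cdot\bm{wv}$, which by max-flow/min-cut is in turn equivalent to the Hall-type inequalities
\begin{equation*}
n\sum_{j\in S}wv_j\ \leq\ \sum_{u\in[n]}\min\!\Bigl(1,\,\sum_{j\in S}c_{uj}\Bigr)\qquad(\forall\,S\subseteq[n]),
\end{equation*}
where $c_{uj}=1$ if $j\in\{1,u\}$ with $j\leq u$, $c_{uj}=\tfrac{1}{2}$ if $2\leq j\leq u-1$, and $c_{uj}=0$ if $j>u$.

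The final and main step is the verification of each such inequality. When $1\in S$ the right-hand side equals $n$, so the bound is immediate. When $1\notin S$ it is a genuine upper bound on $\sum_{j\in S}wv_j$ that reduces, in the singleton case, to the bounds $\Pi^{i,n}\leq(n-i+2)/(2n)$ already proved in Propositions~\ref{prop:upperbd} and~\ref{prop:generalbuffnerf}. I would establish the general inequality by a buff-and-nerf argument tailored to the set $S$: starting from $P\in\mathcal{D}_n$, I would perform a carefully chosen sequence of buff and nerf operations that collapses the players into a small number of equivalence classes indexed by their membership in $S$ and by their rank relative to it, forcing equalities of win-probabilities within each class via symmetry while keeping control of $\sum_{j\in S}wv_j$ through Lemma~\ref{lem:honesty}.

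The principal obstacle is that no per-$u$ version of these inequalities holds: already in the three-player tournament $\bm{T}_1$ of Section~\ref{sect:threeplayer} we have $\nu^{(1)}=(0,\tfrac{1}{2},\tfrac{1}{2})$, which flagrantly violates the Rule-$2$ constraint that $\nu^{(1)}$ be supported on $\{1\}$. Consequently, probability mass must be redistributed across different values of the excluded player, and the uniformity of the frugal exclusion step is precisely what permits this redistribution; the same argument cannot be carried out for general symmetric and honest tournaments, which is consistent with Conjecture~\ref{conj:nplayer} being open for $n\geq 4$. Orchestrating the buff/nerf moves consistently across every subset $S$ is where the bulk of the technical work will lie, and I anticipate an induction on $n$ in which the inductive hypothesis applied to the sub-process of the first paragraph handles all but a few boundary subsets $S$ that interact directly with the excluded vertex.
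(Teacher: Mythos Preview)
Your setup matches the paper's: both decompose $\bm{wv} = \frac{1}{n}\sum_u \nu^{(u)}$ via frugality, and both characterize $\mathcal{A}_n^*$ as the image of the polytope you call $\mathcal{B}_n^*$ (the paper calls its elements \emph{fractional arc flows}). Where you diverge is in how to show $\bm{wv}$ lies in this image.

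You propose to verify the dual Hall-type inequalities by a global buff-and-nerf argument on $\bm{wv}$ itself, and you acknowledge this is where the bulk of the work lies and leave it unfinished. The paper instead constructs the required matrix $B \in \mathcal{B}_n^*$ directly, once two easy per-$u$ facts are established:
\begin{enumerate}
\item $\nu^{(u)}_j \leq \tfrac{1}{2}$ whenever $j \notin \{1, u\}$ (with the single exception $u=1$, $j=2$), by the same argument as Proposition~\ref{prop:upperbd} applied inside the sub-tournament on $[n]\setminus\{u\}$;
\item $\nu^{(u)}_j \leq \nu^{(j)}_u$ whenever $u < j$: buff player $j$ to be identical to player $u$; by honesty $\nu^{(u)}_j$ can only increase; by symmetry of the sub-process at the buffed matrix $\nu^{(u)}_j = \nu^{(j)}_u$; and $\nu^{(j)}_u$ does not depend on the skill of the excluded player $j$, so it is unchanged by the buff.
\end{enumerate}
The second inequality is exactly the idea you are missing. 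It says that every ``Rule-2-violating'' mass $\nu^{(u)}_j$ with $j>u$ is dominated by the reverse mass $\nu^{(j)}_u$, so one can cancel pairwise: set $B_{uj}=0$, reduce $B_{ju}$ by $\nu^{(u)}_j$, and add $\nu^{(u)}_j$ back to each of $B_{uu}$ and $B_{jj}$. Column sums are preserved, the lone exception in item~1 is a backward flow and hence also absorbed by this cancellation, and the resulting matrix lies in $\mathcal{B}_n^*$. This is precisely the ``redistribution across different values of $u$'' you correctly identified as necessary, but carried out by a two-line local fix rather than a global Hall argument; the induction on $n$ and the subset-by-subset buff/nerf scheme you anticipate are not needed.
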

\begin{corollary} The closure of the set of
  all achievable win vectors for all symmetric,
  honest and frugal $n$-player tournaments equals $\mathcal{A}_n^*$. 
\end{corollary}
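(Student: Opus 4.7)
The plan is to obtain one inclusion directly from Theorem~\ref{thm:frugal} and the other from the explicit constructions given in the proof of Theorem~\ref{thm:nplayer}. Denote by $\mathcal{A}_n^{\mathrm{fr}}$ the closure of the set of achievable win vectors of symmetric, honest and frugal $n$-player tournaments. Theorem~\ref{thm:frugal} says that every such win vector already lies in $\mathcal{A}^{*}_n$; since $\mathcal{A}^{*}_n$ is a convex polytope in $\mathbb{R}^n$, and in particular closed, taking closures yields $\mathcal{A}_n^{\mathrm{fr}} \subseteq \mathcal{A}^{*}_n$.

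For the reverse inclusion I would observe that the tournaments $\bm{T}_{G,N}$ constructed in the proof of Theorem~\ref{thm:nplayer} (in both Case~I for $n\geq 5$ and Case~II for $n=4$) are \emph{already} frugal by design: their Step~2 chooses one player uniformly at random and excludes him from every subsequent match. For each $G\in\mathcal{G}_n$ one therefore obtains, in the limit $N\to\infty$, the corner $v(G)$ of $\mathcal{A}^{*}_n$ as a point of $\mathcal{A}_n^{\mathrm{fr}}$. To upgrade this to all of $\mathcal{A}^{*}_n$ I would invoke the convex-combination trick from the proof of Theorem~\ref{thm:threeplayer}: if $\bm{T}^0$ and $\bm{T}^1$ are both frugal, symmetric and honest, then so is the tournament that, with probability $p$, runs $\bm{T}^0$ and otherwise $\bm{T}^1$; frugality survives the mixture because both components begin with the same uniform-exclusion step, and the resulting win vector is the corresponding convex combination. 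Hence $\mathcal{A}_n^{\mathrm{fr}}$ is convex; since it already contains every corner $v(G)$ of the convex polytope $\mathcal{A}^{*}_n$, we conclude $\mathcal{A}^{*}_n \subseteq \mathcal{A}_n^{\mathrm{fr}}$.

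The only substantive thing to verify is that the convex-combination trick really does preserve frugality, and that each $\bm{T}_{G,N}$ satisfies the property --- both are immediate from the definitions, so no real obstacle arises. A minor loose end is the small-$n$ regime: for $n=3$ the construction of Theorem~\ref{thm:nplayer} does not literally apply, but the corners $V_1,V_2$ of $\mathcal{A}^{*}_3$ are realised (in the limit) by the frugal tournaments $\bm{T}_1,\bm{T}_2$ of Section~\ref{sect:threeplayer}, and the remaining corners $V_3,V_4,V_5$ admit entirely analogous frugal realisations in which, after the uniform exclusion of one player, the two remaining players play $N$ rounds and the observed win-fraction is used to (asymptotically) identify the underlying pair and then assign the overall winner. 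The cases $n\leq 2$ are either trivial or covered by direct inspection.
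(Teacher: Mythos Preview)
Your approach for $n\geq 4$ is exactly the paper's: one inclusion from Theorem~\ref{thm:frugal} (plus closedness of the polytope), the other from observing that the tournaments $\bm{T}_{G,N}$ in the proof of Theorem~\ref{thm:nplayer} are already frugal.

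There is, however, a genuine logical slip in your convexity step. The convex-combination trick only shows that mixing two frugal, symmetric, honest tournaments yields the convex combination of their win vectors \emph{for the same specialization matrix $P$}. It does \emph{not} show that $\mathcal{A}_n^{\mathrm{fr}}$ --- a union over all $P\in\mathcal{D}_n$, then closure --- is convex; a union of convex sets need not be. What actually makes the argument work for $n\geq 4$ is that in the proof of Theorem~\ref{thm:nplayer} a \emph{single fixed} $P$ is used for every $G\in\mathcal{G}_n$, so convex combinations of the $\bm{T}_{G,N}$ (still evaluated at that same $P$) already fill out $\mathcal{A}_n^*$. Rephrase the argument this way rather than asserting convexity of $\mathcal{A}_n^{\mathrm{fr}}$.

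Your treatment of $n=3$ does not go through. With the matrix of Section~\ref{sect:threeplayer} ($p_{12}=p_{23}=\tfrac12$, $p_{13}=1$) the pairs $\{1,2\}$ and $\{2,3\}$ yield identical match statistics, so your ``identification'' step fails; and with a generic $P$ having all entries distinct, honesty forces the winning probability of a remaining player to be \emph{monotone} in his observed win-fraction, which one can check is incompatible with approaching $V_3=(1,0,0)$ (the required assignment would be non-monotone since $p_{12}<p_{23}<p_{13}$ is possible). The paper's own realisations of $V_3,V_4$ are round-robin and not frugal, and the sentence preceding Theorem~\ref{thm:frugal} explicitly restricts the frugal construction to $n\geq 4$; you should not claim more for small $n$ than can be verified. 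For $n=2$ note that a frugal tournament plays no matches whatsoever, so the only achievable vector is $(\tfrac12,\tfrac12)$ --- this case is certainly not ``covered by direct inspection''.
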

\begin{proof} This follows immediately from Theorems \ref{thm:frugal} and
  \ref{thm:nplayer}.
\end{proof}

In order to prove Theorem \ref{thm:frugal}, we need a new formulation of
$\mathcal{A}_n^*$. We say that a matrix $M\in\mathbb{R}^{n\times n}$ is a
\emph{fractional arc flow} if
\begin{align}
m_{ij} \geq 0&\text{ for all }i \geq j,\label{eq:af1}\\
m_{ij} = 0&\text{ for all }i<j,\label{eq:af2}\\
m_{ij} \leq \frac{1}{2}&\text{ for all }j\neq 1, i,\label{eq:af3}\\
\sum_{j=1}^n m_{ij} = 1&\text{ for all }i\in [n].\label{eq:af4}
\end{align}
\begin{lemma}\label{lem:faf}
  For any fractional arc flow $M$, define $v(M)\in\mathbb{R}^n$ by
  $v_j(M)=\frac1n \sum_{i=1}^n m_{ij}$. Then $v(M)\in \mathcal{A}_n^*$.
\end{lemma}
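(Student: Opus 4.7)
The plan is to realize $M$ as a convex combination of the integer ``digraph matrices'' $M^G$, $G\in\mathcal{G}_n$, where $m^G_{ij}$ equals half the number of arcs from $i$ to $j$ in $G$. Since $v_j(M^G) = \mathrm{indeg}_G(j)/(2n) = v_j(G)$, any such decomposition immediately yields $v(M) \in \mathcal{A}_n^*$ by linearity of $v$ in $M$.

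The key observation is that the constraints (\ref{eq:af1})--(\ref{eq:af4}) are \emph{row-separable}: each row $r_i$ of $M$ lies, independently of the other rows, in the polytope
\begin{equation*}
  R_i = \left\{ r \in \mathbb{R}^i : r_j \geq 0,\ r_j \leq \tfrac12 \text{ for } j \notin \{1,i\},\ \textstyle\sum_{j} r_j = 1 \right\}.
\end{equation*}
A routine vertex enumeration -- noting that $R_i$ is $(i-1)$-dimensional and so each vertex must tighten $i-1$ of the inequality constraints -- shows that the vertices of $R_i$ are precisely $e_1$, $e_i$, and $\tfrac12(e_{j_1}+e_{j_2})$ for every two-element subset $\{j_1,j_2\}\subseteq[i]$ with $\{j_1,j_2\}\neq\{1,i\}$. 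Each such vertex coincides with the row-$i$ pattern of some $G\in\mathcal{G}_n$: two arcs from $i$ to vertex $1$, two loops at $i$, or one arc each to $j_1$ and $j_2$, respectively. (The remaining allowed digraph pattern $\tfrac12(e_1+e_i)$ fails to be a vertex but sits on the edge from $e_1$ to $e_i$, so no legal configuration is lost.)

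Given this, I would decompose each row as $r_i = \sum_k \lambda_{i,k}\, s_{i,k}$ with $\lambda_{i,k}\geq 0$, $\sum_k \lambda_{i,k}=1$, and each $s_{i,k}$ a valid digraph row pattern, and then combine the rows independently. Setting $\Lambda_G := \prod_{i=1}^n \lambda_{i,\,k_i(G)}$, where $k_i(G)$ is the index of the pattern that $G$ uses on row $i$, the sum $\sum_G \Lambda_G$ factors into a product of row sums and thus equals $1$, while entrywise
\begin{equation*}
  \sum_G \Lambda_G\, m^G_{ij} = \Big(\prod_{i'\neq i}\sum_{k_{i'}}\lambda_{i',k_{i'}}\Big)\cdot\sum_{k_i} \lambda_{i,k_i}\, s_{i,k_i,j} = m_{ij},
\end{equation*}
because $m^G_{ij}$ depends only on row $i$ of $G$. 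Hence $M = \sum_G \Lambda_G M^G$, which by linearity of $v$ gives $v(M) = \sum_G \Lambda_G v(G) \in \mathcal{A}_n^*$. The only nontrivial step is the vertex enumeration for $R_i$; the rest is a clean row-wise Birkhoff-style independent-product construction.
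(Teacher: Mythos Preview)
Your proof is correct, and it takes a genuinely different route from the paper's. The paper argues by linear-programming duality: it observes that the image $A$ of the set of fractional arc flows under $v$ is a convex polytope, and then shows that for every linear functional $u\in\mathbb{R}^n$ the maximum of $u\cdot v$ over $A$ is attained at some $v(G)$ with $G\in\mathcal{G}_n$ (by greedily routing flow to the indices with largest $u_j$). Since the same greedy procedure characterizes $\max_{v\in\mathcal{A}_n^*} u\cdot v$, the two polytopes have identical support functions and hence coincide. Your argument instead exploits the row-separability of the constraints directly: you enumerate the vertices of each row polytope $R_i$, identify them with the admissible out-arc patterns at vertex $i$, and then assemble a convex combination of digraph matrices via an independent product across rows. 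This is more constructive---it actually exhibits the coefficients $\Lambda_G$---whereas the paper's proof is shorter but non-explicit. Your approach also makes transparent \emph{why} Rule~3 is exactly the right restriction: it carves out precisely those row patterns that sit strictly inside an edge of $R_i$.
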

\begin{proof}
  Let $A$ be the set of vectors $v$ that can be obtained from fractional arc
  flows in this way. Clearly, $A$ is a convex
  polytope in $\mathbb{R}^n$. Thus it is
  uniquely defined by the values of $\max_{v\in A} u\cdot v$ for all
  $u \in \mathbb{R}^n$. For a given $u\in\mathbb{R}^n$, it is easy to optimize
  the
  corresponding fractional arc flow. Namely, initially all vertices are given
  a flow of $1$. Go through the indices $j\in [n]$ in the order of decreasing
  $u_j$, with ties broken arbitrarily, and try to send as much remaining flow as
  possible from all $i\geq j$ to $j$. By (\ref{eq:af3}),
  we see that any such optimal $v$ is
  given by $v(G)$ for some $G\in \mathcal{G}_n$. From the
  discussion in the paragraph preceding Conjecture \ref{conj:nplayer}, it is
  easy to see that the
  vector $v(G)$ is also the optimal vector in the maximization problem
  $\max_{v\in\mathcal{A}_n^*} u\cdot v$. Hence $A=\mathcal{A}_n^*$ as desired.
\end{proof}

\begin{proof}[Proof of Theorem \ref{thm:frugal}]
  For any $i\neq j$, let $\bm{T}^i$ denote the modified version of this
  tournament that always excludes player $i$. By possibly precomposing
  $\bm{T}$ with a random permutation of the players, we may assume that the
  rules of $\bm{T}^i$ do not depend on $(a)$ which player $i$ was excluded, and
  $(b)$ the order of the remaining players $[n]\setminus \{i\}$.

  Let $\pi_j^i(P)$ denote the winning probability for player $j$ in the
  specialization $(\bm{T}^i, P)$. Then
  $\pi_j(P) = \frac{1}{n}\sum_{i=1}^n\pi_j^i(P)$. As $\bm{T}$ is honest, it
  follows directly from the definition of honesty that also $\bm{T}^i$ is
  honest, hence $\pi_j^i(P)$ is increasing in $p_{jk}$ for any $k\neq j$.
  Moreover, if two players $i$ and $j$ are identical for a given
  $P\in\mathcal{M}_n$ in the sense that $p_{ik}=p_{jk}$ for all $k\in[n]$, then
  by $(a)$ by $(b)$, $$\pi^i_j(P)=\pi^j_i(P)$$ and
  $$\pi^k_i(P)=\pi^k_j(P)\text{ for any }k\neq i, j.$$
  Using the same argument as in Proposition \ref{prop:upperbd} it follows that,
  for any $P \in \mathcal{D}_n$, 
  $$\pi^i_j(P)\leq \frac{1}{2}\text{ unless either (i) $j=1$, (ii) $i=j$, or (iii) $i=1$ and $j=2$.}$$
  Moreover, for any $P\in\mathcal{D}_n$ and $i<j$, let $P'$ be the matrix
  obtained by buffing player $j$ to be identical to player $i$. Then, by
  honesty, $\pi^i_j(P) \leq \pi^i_j(P')$, by $(a)$, $\pi^i_j(P')=\pi^j_i(P')$,
  and as $\pi^j_i(\cdot)$ does not depend on the skill of player
  $j$, $\pi^j_i(P')=\pi^j_i(P)$. Thus
  $$\pi^i_j(P) \leq \pi^j_i(P)\text{ for any $i<j$ and $P\in\mathcal{D}_n$.}$$

  The idea now is that, for a given $P\in\mathcal{D}_n$, we can interpret the
  probabilities $\pi^i_j(P)$ in terms of a
  fractional arc flow. For any $i, j\in[n]$
  we define $m'_{ij} = \pi^i_j(P)$. Then $\pi_j(P) = \frac1n \sum_{i=1}^n m'_{ij}$.
  Now, this does not necessarily define an arc flow as $m'_{ij}$ might be
  positive
  even if $i<j$, and we might have $m'_{12}>\frac{1}{2}$ (which is really just a
  special case of the former). However, as $m'_{ij}\leq m'_{ji}$ whenever $i<j$,
  we can cancel out these ``backwards flows'' by, whenever $m'_{ij}=x>0$ for
  $i<j$, reducing $m'_{ij}$ and $m'_{ji}$ and increasing $m'_{ii}$ and $m'_{jj}$,
  all by $x$. Let $(m_{ij})$ be the resulting matrix. Then this is an arc flow.
  As the cancelling does not change the net influx to each vertex, we have
  $\pi_j(P) = \frac1n \sum_{i=1}^n m_{ij}$. Hence the theorem follows by Lemma
  \ref{lem:faf}.
\end{proof}
  
\section{Tournament maps}\label{sect:maps}

As we have seen earlier in the article, an $n$-player tournament induces a map $P\mapsto\bm{wv}(P)$ from $\mathcal{M}_n$ to the set $\mathcal{P}_n$ of probability distributions on
$[n]$. The aim of this section is to see how honest and symmetric tournaments
can be characterized in terms of these maps.

We define an $n$-player \emph{tournament map} as any continuous function $f$
from $\mathcal{M}_n$ to $\mathcal{P}_n$. For any $M\in \mathcal{M}_n$
we denote $f(M) = (f_1(M),\dots,\,f_n(M))$.
Similarly to tournaments, we define:
\\
\\
  {\sc Symmetry:}  For any permutation $\sigma\in\mathcal{S}_n$ and any
  $P\in \mathcal{M}_n$, we define $Q=(q_{ij})\in\mathcal{M}_n$ by
  $q_{\sigma(i)\sigma(j)} = p_{ij}$ for all $i, j \in [n]$. We say that a tournament
  map $f$ is \emph{symmetric} if, for any
  $P\in \mathcal{M}_n$, $\sigma\in\mathcal{S}_n$ and any $i\in[n]$, we have
  $f_i(P)=f_{\sigma(i)}(Q).$
\\
\\
  {\sc Honesty:} A tournament map $f$ is (strictly) \emph{honest} if for any two
  distinct $i, j\in [n]$ we have that $f_i(P)$ is (strictly) increasing in
  $p_{ij}$.
\\
\\
Using these definitions it follows that the tournament map $f_{{\bm{T}}}$
induced by a
tournament $\bm{T}$ inherits the properties of $\bm{T}$.
\begin{lemma}\label{lemma:symesymhonehon}
  The tournament map induced by any symmetric tournament is symmetric. The
  tournament map induced by any honest tournament is honest.
\end{lemma}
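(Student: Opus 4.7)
The plan is to verify three properties of the map $f_{\bm{T}}(P) := \bm{wv}((\bm{T}, P))$: that it is continuous (so that it genuinely qualifies as a tournament map in the sense just defined), that it inherits symmetry from $\bm{T}$, and that it inherits honesty from $\bm{T}$.

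First I would establish continuity, which is a hidden prerequisite for the statement to even make sense. Since by our standing convention each tournament has an a priori bound $m$ on the number of matches played, every complete execution of $\bm{T}$ (for fixed values of its internal randomness) corresponds to a sequence of at most $m$ match outcomes. The probability of any such sequence of outcomes is a monomial of degree at most $m$ in the entries $p_{ij}$. Summing over the finitely many histories that result in player $k$ being declared the winner, and integrating over $\bm{T}$'s internal randomness, expresses $f_k(P)$ as a polynomial in the entries of $P$, and in particular as a continuous function on $\mathcal{M}_n$.

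For the symmetry claim the argument is essentially tautological. The definition of symmetry for a tournament map, $f_i(P) = f_{\sigma(i)}(Q)$, becomes, under the identification $f_i(P) = \pi_i(P)$, precisely the defining equation $\pi_i(P) = \pi_{\sigma(i)}(Q)$ of a symmetric tournament. Hence symmetry of $f_{\bm{T}}$ is immediate from that of $\bm{T}$.

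For the honesty claim, the real work has already been done in Lemma \ref{lem:honesty}, which asserts exactly that if $\bm{T}$ is honest then $\pi_k(P) = f_k(P)$ is an increasing function of $p_{kl}$ for every pair $k \neq l$. This is verbatim the definition of honesty for the induced tournament map, so the implication follows by a direct invocation of that lemma. The only nontrivial conceptual content here --- namely, the passage from the local, state-by-state condition that throwing a single match is never advantageous, to the global monotonicity of the overall win probability in each entry $p_{kl}$ --- is entirely absorbed into the ``tournaments-on-steroids'' coupling used to prove Lemma \ref{lem:honesty}, so no further work is needed.
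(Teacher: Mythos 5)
Your proof is correct and follows the paper's own argument: symmetry of the induced map is exactly the definition of a symmetric tournament, and honesty is precisely the content of Lemma \ref{lem:honesty}. Your additional verification of continuity (win probabilities are polynomials in the entries of $P$, thanks to the bound on the number of matches) is a reasonable extra check that the paper omits, and is consistent with the paper's later remark that induced maps are polynomial.
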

\begin{proof} The first statement is the definition of a symmetric tournament.
  The second statement follows from Lemma \ref{lem:honesty}.
\end{proof}

We now want to show a converse to this lemma. Here we have to be a bit careful
though. Consider for instance the $2$-player tournament map
$$f_1(P) := \frac{1}{2}+\sin(p_{12}-\frac{1}{2}), \;\;\;\;
f_2(P) := \frac{1}{2}-\sin(p_{12}-\frac{1}{2}).$$
This can be shown to be symmetric and honest, but as $f_1$ and $f_2$ are not
polynomials in the entries of $P$, this map cannot be induced by any tournament
whatsoever. On the other hand, for any tournament map $f$, we can construct a
tournament $\bm{T}_f$ whose win vector approximates $f$ arbitrarily well.

\begin{definition}\label{def:Tf}
  Let $f$ be an $n$-player tournament map and let $N$ be a (large) positive
  integer. We let $\bm{T}_f=\bm{T}_{f,N}$ denote the tournament defined as
  follows: \begin{itemize} \item Play $N$ iterations of round-robin.
  \item Let $\hat{p}_{ij}$ denote the fraction of matches that $i$ won against $j$, and let $\hat{P}\in\mathcal{M}_n$ be the corresponding matrix.
    \item Randomly elect a tournament winner from the distribution given by $f(\hat{P})$.
    \end{itemize}
\end{definition}

\begin{proposition}\label{prop:tournamentmaptournament}
  Let $f$ be an $n$-player tournament map. For any $\varepsilon>0$ there exists
  an $N_0$ such that for $N\geq N_0$, the tournament $\bm{T}_f$ satisfies
  $\abs{\pi_i(P)-f_i(P)} < \varepsilon$ for all $P\in \mathcal{M}_n$ and all
  $i\in [n]$. Moreover $\bm{T}_f$ is symmetric if $f$ is symmetric, and (strictly) honest
  if $f$ is (strictly) honest.
\end{proposition}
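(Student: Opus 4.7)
The plan is to separate the three claims---uniform approximation, preservation of symmetry, and preservation of (strict) honesty---and treat them in that order. Honesty is the most delicate; uniformity in the approximation step is the other place requiring care.

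For the approximation, observe first that by construction $\pi_i(P) = \mathbb{E}_P[f_i(\hat P)]$, where $\hat P = (\hat p_{kl})$ is the random empirical matrix produced by the round-robin phase. Since $\mathcal{M}_n$ is compact, $f$ is uniformly continuous: for a given $\varepsilon > 0$ there exists $\delta > 0$ such that $\|P - Q\|_\infty < \delta$ implies $|f_i(P) - f_i(Q)| < \varepsilon/2$ for every $i$. Hoeffding's inequality gives, for each ordered pair $(k,l)$ and every $P \in \mathcal{M}_n$, $\Pr(|\hat p_{kl} - p_{kl}| \geq \delta) \leq 2e^{-2N\delta^2}$. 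A union bound over the pairs, combined with the trivial estimate $|f_i| \leq 1$ on the bad event, then yields $|\pi_i(P) - f_i(P)| < \varepsilon$ for all $P$ simultaneously, as soon as $N$ is sufficiently large. The uniformity over $P$ comes for free from the distribution-free nature of Hoeffding's bound.

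Symmetry is essentially automatic: the round-robin schedule is invariant under any permutation $\sigma$ of the players, and the map $P \mapsto \hat P$ is equivariant with respect to conjugation by $\sigma$, so applying the symmetric map $f$ at the end transfers the equivariance to $\bm{T}_f$. For honesty, I would use a pathwise coupling. At any state immediately before some match $r+1$ (assigned by the fixed round-robin schedule to players $i$ and $j$), run the two branches ``$i$ wins match $r+1$'' and ``$i$ loses match $r+1$'' using identical random bits for every subsequent match. Under this coupling, the final empirical matrices $\hat P^+$ and $\hat P^-$ agree entrywise except that $\hat p^+_{ij} - \hat p^-_{ij} = 1/N$ and $\hat p^+_{ji} - \hat p^-_{ji} = -1/N$. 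Honesty of $f$ then gives $f_i(\hat P^+) \geq f_i(\hat P^-)$ for every realization; taking expectations yields $\pi_i^+(P) \geq \pi_i^-(P)$, which is the required honesty of $\bm{T}_f$.

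For the strict case, strict honesty of $f$ means $f_i$ is strictly increasing in $p_{ij}$ throughout $\mathcal{M}_n$, so the pointwise inequality in the coupling upgrades to $f_i(\hat P^+) > f_i(\hat P^-)$, giving $\pi_i^+(P) > \pi_i^-(P)$ for any $P \in \mathcal{M}_n^o$. The second clause of the definition of strict honesty---that every pair meets with positive probability---holds automatically since the round-robin plays $N \geq 1$ matches between every pair. The main obstacle in the plan is justifying the coupling: it relies on the round-robin schedule in Definition \ref{def:Tf} being non-adaptive, so that the identity of the pair playing each subsequent match, and the distribution of its outcome, are independent of whether $i$ or $j$ wins match $r+1$. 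This non-adaptivity is built into the definition, so the coupling is legitimate.
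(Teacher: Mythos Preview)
Your proof is correct and follows essentially the same approach as the paper: write $\pi_i(P)=\mathbb{E}[f_i(\hat P)]$, use uniform continuity of $f$ on the compact set $\mathcal{M}_n$ together with a concentration bound on $\hat P$, and observe that symmetry and honesty are inherited from $f$. Your use of Hoeffding's inequality makes the uniformity over $P$ explicit (the paper appeals more loosely to the Law of Large Numbers), and your pathwise coupling argument for honesty spells out precisely what the paper dismisses as ``easy to see''.
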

\begin{proof}
  It is easy to see that this tournament is symmetric if $f$ is so, and
  likewise for honesty. It only remains to show that the win vector is
  sufficiently close to $f(P)$ for all $P\in\mathcal{M}_n$. First, note that
  $\pi_i(P) = \mathbb{E}f_i(\hat{P})$. Hence, by Jensen's inequality,
  $$\abs{\pi_i(P)-f_i(P)} \leq \mathbb{E}\abs{f_i(\hat{P})-f_i(P)}.$$ As $f$ is
  continuous and $\mathcal{M}_n$ is compact, $f$ is uniformly continuous.
  Hence, given $\varepsilon>0$, there exists a
  $\delta>0$ such that, for any $P$,
  $\abs{f_i(\hat{P})-f_i(P)} < \varepsilon/2$ whenever
  $\|\hat{P}-P\|_\infty < \delta$. Choosing $N_0$ sufficiently large, we can
  ensure that $\mathbb{P}(\|\hat{P}-P\|_\infty \geq \delta) < \varepsilon/2$,
  by the Law of Large Numbers.
  As, trivially, $\abs{f_i(\hat{P})-f_i(P)} \leq 1$, it follows that
  $$\mathbb{E}\abs{f_i(\hat{P})-f_i(P)} < \varepsilon/2 \cdot \mathbb{P}(\|\hat{P}-P\|_\infty < \delta) + 1\cdot \mathbb{P}(\|\hat{P}-P\|_\infty \geq \delta) \leq \varepsilon/2 + \varepsilon/2.$$
\end{proof}

For a given $\varepsilon>0$, we say that two $n$-player tournaments $\bm{T}_1$ and $\bm{T}_2$ are \emph{$\varepsilon$-close} if, for any $i\in[n]$ and $P\in\mathcal{M}_n$, we have $\abs{\pi_i(\bm{T}_1, P)-\pi_i(\bm{T}_2, P)} < \varepsilon$. A nice implication of the above results is that Definition \ref{def:Tf} provides an almost general construction of symmetric, honest tournaments in the following sense.

\begin{corollary}\label{cor:andersnormalform}
Any symmetric and honest tournament $\bm{T}$ is $\varepsilon$-close to a tournament $\bm{T}_f$ for a symmetric and honest tournament map $f$. As a consequence any such $\bm{T}$ is $\varepsilon$-close to a symmetric and honest tournament where
\begin{itemize}
\item the match schedule is fixed,
\item each pair of players meet the same number of times,
\item the tournament satisfies a stronger form of honesty, namely, given
  the outcomes of all both past and future matches in the tournament, it is never better to lose the current match than to win it.
\end{itemize}
\end{corollary}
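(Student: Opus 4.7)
The proof plan chains together Lemma \ref{lemma:symesymhonehon} and Proposition \ref{prop:tournamentmaptournament}. Given a symmetric and honest $n$-player tournament $\bm{T}$, I would take $f := f_{\bm{T}}$, the tournament map $P \mapsto \bm{wv}(\bm{T}, P)$. This $f$ is continuous (in fact each coordinate is a polynomial in the entries of $P$, since $\bm{T}$ terminates in a bounded number of matches), so it really is a tournament map in the sense of the section. By Lemma \ref{lemma:symesymhonehon}, $f$ is symmetric and honest, and then Proposition \ref{prop:tournamentmaptournament} furnishes, for $N$ sufficiently large, a tournament $\bm{T}_f = \bm{T}_{f,N}$ that is symmetric, honest, and $\varepsilon$-close to $f = \bm{wv}(\bm{T}, \cdot)$; this gives the first sentence of the corollary.

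For the three bulleted consequences, I would simply inspect the construction in Definition \ref{def:Tf}. By design, $\bm{T}_f$ plays a fixed round-robin schedule in which every pair of players meets exactly $N$ times, so the first two properties are immediate, and symmetry and (ordinary) honesty have already been established.

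The strengthened honesty requires a short argument, but it is essentially free. Fix any particular match $m$ between players $i$ and $j$ and condition on the outcomes of every other match --- past or future --- being prescribed. Then the empirical matrix $\hat{P}$ that determines the final random draw depends only on the outcome of $m$: the entry $\hat{p}_{ij}$ changes by exactly $1/N$ (and $\hat{p}_{ji}$ by $-1/N$) according to who wins, while all other entries are held fixed. Since $f$ is honest, $f_i(\hat{P})$ is non-decreasing in $\hat{p}_{ij}$, so player $i$'s conditional probability of being declared the tournament winner is at least as large when he wins match $m$ as when he loses it. The main conceptual point --- and really the only non-routine step --- is observing that once we pass through the tournament map $f$, the win probabilities become explicit monotone continuous functions of $\hat{P}$, which automatically upgrades ordinary honesty to the stronger ``past-and-future conditioning'' form.
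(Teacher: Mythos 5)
Your proposal is correct and follows exactly the paper's route: take the induced map $f = f_{\bm{T}}$, apply Lemma \ref{lemma:symesymhonehon} and Proposition \ref{prop:tournamentmaptournament}, and read the listed properties off the construction of $\bm{T}_{f,N}$ in Definition \ref{def:Tf}. Your explicit argument for the strengthened honesty (conditioning on all other outcomes, the empirical matrix changes only in the $(i,j)$ entry, and honesty of $f$ gives monotonicity) is a correct fleshing-out of what the paper dismisses as ``clear''.
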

\begin{proof}
Let $f$ be the induced tournament map of $\bm{T}$. Then $f$ is symmetric and honest by Lemma \ref{lemma:symesymhonehon}, and by Proposition \ref{prop:tournamentmaptournament}, $\bm{T}_f=\bm{T}_{f, N}$ is $\varepsilon$-close to $\bm{T}$ for $N$ sufficiently large. It is clear that $\bm{T}_f$ has the claimed properties.
\end{proof}

Let $A_n$ denote the set of all vectors $f(P)$ attained by symmetric
and honest $n$-player tournament maps $f$ at doubly monotonic
matrices $P \in \mathcal{D}_n$. 

\begin{corollary}\label{cor:weakaAn}
  $\bar{A}_n = \mathcal{A}_n$, where $\bar{A}_n$ denotes the closure of
  $A_n$.
\end{corollary}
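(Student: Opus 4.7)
The plan is to establish the equality by proving both inclusions, each of which follows almost immediately from a result already in this section.

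For $\mathcal{A}_n \subseteq \bar{A}_n$, I would first show that the (un-closed) set of achievable win vectors is contained in $A_n$. Indeed, if $\bm{x} = \bm{wv}(\bm{T}, P)$ for some symmetric and honest $n$-player tournament $\bm{T}$ and some $P \in \mathcal{D}_n$, then $\bm{x} = f_{\bm{T}}(P)$, and by Lemma \ref{lemma:symesymhonehon} the induced map $f_{\bm{T}}$ is itself symmetric and honest. Hence $\bm{x} \in A_n \subseteq \bar{A}_n$. Taking closures (and noting $\bar{A}_n$ is closed by definition) yields $\mathcal{A}_n \subseteq \bar{A}_n$.

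For the reverse inclusion $\bar{A}_n \subseteq \mathcal{A}_n$, it suffices to show $A_n \subseteq \mathcal{A}_n$, since $\mathcal{A}_n$ is closed. Fix a symmetric and honest tournament map $f$ and a matrix $P \in \mathcal{D}_n$, and let $\bm{x} = f(P)$. For each positive integer $N$, apply Proposition \ref{prop:tournamentmaptournament} to obtain the tournament $\bm{T}_{f, N}$, which is symmetric and honest, and whose win probabilities satisfy $|\pi_i(\bm{T}_{f, N}, P) - f_i(P)| < \varepsilon_N$ uniformly in $P$, where $\varepsilon_N \to 0$ as $N \to \infty$. Since $P \in \mathcal{D}_n$ and $\bm{T}_{f, N}$ is symmetric and honest, the vector $\bm{wv}(\bm{T}_{f, N}, P)$ is achievable; the sequence $\{\bm{wv}(\bm{T}_{f, N}, P)\}_N$ of achievable vectors converges to $\bm{x}$, so $\bm{x} \in \mathcal{A}_n$.

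Combining the two inclusions gives $\bar{A}_n = \mathcal{A}_n$, as desired. There is no real obstacle here: all the technical work has already been done in Lemma \ref{lemma:symesymhonehon} (which transfers symmetry and honesty from a tournament to its induced map) and in Proposition \ref{prop:tournamentmaptournament} (which transfers these properties back from a map to an explicit approximating tournament via Definition \ref{def:Tf}). The corollary is really just the statement that these two transfers are mutually inverse at the level of closures.
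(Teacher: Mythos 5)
Your proof is correct and follows essentially the same route as the paper: Lemma \ref{lemma:symesymhonehon} gives $\mathcal{A}_n \subseteq \bar{A}_n$ via the induced map $f_{\bm{T}}$, and Proposition \ref{prop:tournamentmaptournament} gives $A_n \subseteq \mathcal{A}_n$ (hence $\bar{A}_n \subseteq \mathcal{A}_n$ by closedness) via the approximating tournaments $\bm{T}_{f,N}$. No gaps; your write-up is if anything slightly more explicit than the paper's about which sets are closed and why that suffices.
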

\begin{proof}
  If $\bm{T}$ is a symmetric and honest $n$-player tournament then, by
  Lemma \ref{lemma:symesymhonehon}, the
  tournament map $f_{\bm{T}}$ induced by $\bm{T}$ is also
  symmetric and honest. For any $P \in \mathcal{M}_n$, we have 
  $\bm{wv}(\bm{T}, \, P) = f_{\bm{T}}(P)$. It follows that
  $\mathcal{A}_n \subseteq \bar{A}_n$. Conversely, for any symmetric and
  honest tournament map $f$ and any doubly monotonic matrix
  $P\in\mathcal{D}_n$, we know by Proposition
  \ref{prop:tournamentmaptournament} that there exist symmetric and honest
  tournaments $\bm{T}_f$, whose win vector at $P$ approximates $f(P)$
  arbitrarily
  well. Hence $\mathcal{A}_n$ is dense in $\bar{A}_n$. As both sets are closed,
  they must be equal. 
\end{proof}

It turns out that $A_n$ is a closed set, hence $A_n = \mathcal{A}_n$,
a fact which will be established in
Subsection \ref{subsect:polytope} below. Before that, we consider two other
applications of the above material. 

\subsection{Strictly honest tournaments}\label{subsect:strict} As has been remarked earlier in the article, the constructions of symmetric and honest tournaments presented in Sections \ref{sect:threeplayer} and \ref{sect:nplayer} are generally \emph{not} strictly honest. Since, in practice, honestly attempting to win a match typically requires a greater expenditure of effort than not trying, it is natural to require that a tournament should be strictly honest as to guarantee a strictly positive payoff for winning. We will now show how the proof of Corollary \ref{cor:andersnormalform} can be modified such that the tournament $\bm{T}_f$ is also strictly honest. Hence, any symmetric and honest tournament can be approximated arbitrarily well by symmetric and strictly honest ones.

Given $\bm{T}$, let $g = g_{\bm{T}}$ be the induced tournament map and let $h$ be
  any symmetric and strictly honest tournament map whatsoever, for instance
  $$h_i(M) := \frac{1}{{n \choose 2}} \sum_{j\neq i} m_{ij}.$$ Then
  $f=(1-\frac{\varepsilon}{2})g + \frac{\varepsilon}{2} h$ is a symmetric and
  strictly honest tournament map such that, for any $P \in \mathcal{M}_n$, 
  $$||f(P)-g(P)||_{\infty} \leq \frac{\varepsilon}{2} ||g(P)-h(P)||_{\infty} \leq \frac{\varepsilon}{2}.$$
  
  By Proposition \ref{prop:tournamentmaptournament}, we know that choosing $N$
  sufficiently large ensures that, for any $P \in \mathcal{M}_n$,
  $||\bm{wv}((\bm{T}_{f, \, N}, \, P)) - f(P)||_{\infty} < \frac{\varepsilon}{2}$. Hence $\bm{T}_{f, N}$ is $\varepsilon$-close to $\bm{T}$. On the other hand, as $f$ is strictly honest, so is $\bm{T}_{f, N}$, as desired.

\subsection{Tournaments with rounds}\label{subsect:rounds}

In our definition of ``tournament'' we required that matches be played
one-at-a-time. Many real-world tournaments consist of
``rounds'' of matches, where matches in the same round are
in principle meant to be played simoultaneously. In
practice, things usually get even more complicated, with each
round being further subdivided into non-temporally overlapping
segments, for reasons usually having to do with TV viewing. Our formal
definition of tournament is easily extended to accomodate this much complexity:
simply replace ``matches'' by ``rounds of matches'', where each player plays
at most one match per round. In defining honesty, it then makes sense
to condition both on the results from earlier rounds and
on the pairings for the current round.
\par If $\bm{T}$ is such a ``tournament with rounds'', then
there is a canonical associated tournament without rounds
$\bm{T}^{\prime}$, got by internally ordering the matches of each round uniformly
at random. It is easy to see that \par (a) $\bm{T}$ symmetric $\Leftrightarrow$
$\bm{T}^{\prime}$ symmetric,
\par (b) $\bm{T}^{\prime}$ (strictly) honest $\Rightarrow$ $\bm{T}$ (strictly)
honest.
\\
\par The reverse implication in (b) does not always hold, a
phenomenon which will be familiar to sports fans{\footnote{For example, many
    professional European football leagues currently require that, in the final
    round of the season, all matches kick off at the same time. The same rule
    applies to the final round of group matches in major international
    tournaments such as the World Cup and European Championships and was
    introduced after the so-called ``Disgrace of Gij\'{o}n'': \texttt{https://en.wikipedia.org/wiki/Disgrace$\underline{\;}$of$\underline{\;}$Gijon}}}. A toy
counterexample with four players is presented below.
\par Nevertheless, a tournament with rounds also induces a tournament map and, using the same proof idea as Lemma \ref{lem:honesty}, one can show that the induced tournament
map of any symmetric and honest tournament with rounds is symmetric and honest.
Hence, by Corollary \ref{cor:weakaAn}, any win vector that can be attained
by a symmetric and honest tournament with rounds for a doubly monotonic matrix
is contained in $\mathcal{A}_n$. In fact, for any $\varepsilon>0$, Proposition
\ref{prop:tournamentmaptournament} implies that any symmetric and honest
tournament with rounds is $\varepsilon$-close to a regular (i.e. one without
rounds) symmetric and honest tournament $\bm{T}_f$.
\\
\\
  {\sc Example 6.2.1.} Consider the following tournament with rounds $\bm{T}$:
  \\
  \\
  \emph{Step 0:} Pair off the players uniformly at random. Say the pairs are
$\{i,\,j\}$ and $\{k,\,l\}$.
\\
\emph{Round 1:} Play matches $\{i,\,j\}$ and $\{k,\,l\}$.
\\
\emph{Round 2:} Play the same matches.
\\
\emph{Step 3:} Toss a fair coin. The winner of the tournament is determined as
follows:
\par If heads, then
\par \hspace{0.5cm} - if $k$ and $l$ won one match each, the loser of the
first match between $i$ and $j$ wins the tournament
\par \hspace{0.5cm} - otherwise, the winner of the first $\{i,\,j\}$ match
wins the tournament.
\par If tails, then same rule except that we interchange the roles of the
pairs $\{i,\,j\} \leftrightarrow \{k,\,l\}$.
\\
\par It is clear that $\bm{T}$
is symmetric and honest (though not strictly honest, since what one does in
Round 2 has no effect on one's own probability of winning the tournament).
Without loss of generality, take player $i$. If he loses in Round $1$,
then he wins the tournament with probability $p_{kl}(1-p_{kl})$. If he wins in
Round 1, then he wins the tournament with probability
$\frac{1}{2} (p_{kl}^{2} + (1-p_{kl})^2)$. The latter expression is bigger for
any $p_{kl}$, and strictly so if $p_{kl} \neq \frac{1}{2}$. However, consider
any instance of $\bm{T}^{\prime}$. Without loss of generality, $i$ and $j$ play
first in Round 1. Suppose $p_{ij} > \frac{1}{2}$ and $j$ wins this match. Then
each of $k$ and $l$ would be strictly better off if they lost their first
match.

\subsection{$A_n = \mathcal{A}_n$ is a finite union of convex polytopes}\label{subsect:polytope}   
We already know that $\mathcal{A}_n$ is a convex polytope for $n = 1, 2, 3$ and, if
Conjecture \ref{conj:nplayer} holds, then this is true in general. 
In this subsection, we extend the ideas of tournament maps to show that $\mathcal{A}_n$ is a finite union of convex polytopes. We will here take \emph{convex polytope} to mean a set in $\mathbb{R}^n$ for some $n$ that can be obtained as the convex hull of a finite number of points. Equivalently, it is a bounded region of $\mathbb{R}^n$ described by a finite number of non-strict linear inequalities. In particular, a convex polytope is always a closed set. As a corollary, we show the stronger version of Corollary \ref{cor:weakaAn} that $A_n = \mathcal{A}_n$. In particular, for any $n\geq 1$, this gives the alternative characterization
\begin{equation}
\mathcal{A}_n = \{f(P) : f\text{ is a symmetric and honest $n$-player tournament map}, \, P\in\mathcal{D}_n\}
\end{equation}
of the closure of the set of achievable win vectors.

For any $P\in\mathcal{M}_n$, we define
\begin{equation}
A_n(P) = \{f(P) : f\text{ is a symmetric and honest $n$-player tournament map}\}.
\end{equation}
By definition, 
\begin{equation}\label{eq:ulgyAn}
A_n = \bigcup_{P \in \mathcal{D}_n} {A}_n (P)
\end{equation}
and so, by Corollary \ref{cor:weakaAn}, 
\begin{equation}\label{eq:infiniteP}
\mathcal{A}_n = \bar{A}_n = \overline{\bigcup_{P\in\mathcal{D}_n} {A}_n(P)}.
\end{equation}
Our strategy will consist of two main steps. First, we show that it
suffices to take the union in \eqref{eq:ulgyAn} and therefore also in \eqref{eq:infiniteP} over a finite number of
$P\in\mathcal{D}_n$. Second, for any such $P$ we give a discretization argument
that shows that ${A}_n(P)$ is a convex polytope. As then $A_n$ is a finite union of closed sets, it is closed. Hence $\mathcal{A}_n=A_n$ (without closure).

Let us begin with the first step. For any two matrices $P, Q\in \mathcal{M}_n$,
we say that $P$ and $Q$ are \emph{isomorphic} if
$p_{ij}<p_{kl} \Leftrightarrow q_{ij}<q_{kl}$. As there are only a finite number
of ways to order $n^2$ elements, the number of isomorphism classes is clearly
finite.
\begin{proposition}
If $P$ and $Q$ are isomorphic, then ${A}_n(P) = {A}_n(Q)$.
\end{proposition}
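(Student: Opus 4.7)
The plan is to exhibit a continuous ``change-of-variables'' map $\phi$ that carries the entries of $Q$ to the corresponding entries of $P$, and to use $\phi$ to pull any symmetric and honest tournament map $f$ back to a tournament map $g$ satisfying $g(Q) = f(P)$. This will immediately give $A_n(P) \subseteq A_n(Q)$; applying the same construction in the other direction with $\phi^{-1}$ will yield the reverse inclusion, hence equality.

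First I would build a continuous, strictly increasing bijection $\phi\colon [0,1] \to [0,1]$ satisfying $\phi(q_{ij}) = p_{ij}$ for all $i,j\in[n]$ and $\phi(x) + \phi(1-x) = 1$ for all $x\in[0,1]$. The isomorphism hypothesis supplies an order-preserving bijection between the distinct entries of $Q$ and those of $P$. Because $P, Q \in \mathcal{M}_n$, both sets of distinct values are symmetric about $\tfrac12$ (each value $q_{ij}$ is paired with $q_{ji} = 1-q_{ij}$, and similarly for $P$), and both contain $\tfrac12$ itself (from the diagonal entries). Piecewise linear interpolation through the data points $(q_{ij}, p_{ij})$, augmented with the endpoints $(0,0)$ and $(1,1)$, yields such a $\phi$, and the identity $\phi(x)+\phi(1-x)=1$ is automatic since the interpolation nodes lie symmetrically about $(\tfrac12,\tfrac12)$. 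Extending $\phi$ entrywise to matrices, the symmetry relation ensures $\phi(M)\in\mathcal{M}_n$ whenever $M\in\mathcal{M}_n$.

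Now, given a symmetric and honest tournament map $f$, I would set $g(M) := f(\phi(M))$ and verify the three required properties. Continuity is immediate. Symmetry follows because the entrywise action of $\phi$ commutes with relabelling of the players, so the intertwining relation defining symmetry of $f$ transfers to $g$. For honesty, varying $m_{ij}$ in $\mathcal{M}_n$ (with $m_{ji} = 1 - m_{ij}$ forced to move oppositely) causes $\phi(m_{ij})$ and $\phi(m_{ji})$ to move strictly in opposite directions as well, which is exactly the kind of variation along which $f_i$ is monotone by honesty; hence $g_i(M) = f_i(\phi(M))$ is increasing in $m_{ij}$. Finally $g(Q) = f(\phi(Q)) = f(P)$ by construction, so $f(P) \in A_n(Q)$ and the inclusion is proved.

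The only place requiring care is the construction of $\phi$, which must simultaneously interpolate the prescribed values in an order-preserving way, be continuous and strictly monotone on $[0,1]$, and satisfy the involutive symmetry $\phi(x)+\phi(1-x)=1$. I do not anticipate a genuine obstacle here, since the required interpolation nodes inherit the symmetric structure of $\mathcal{M}_n$ and come in matched pairs about $(\tfrac12,\tfrac12)$, with $(\tfrac12,\tfrac12)$ itself forced as a fixed node; piecewise linear interpolation therefore satisfies all three conditions at once.
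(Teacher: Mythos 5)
Your proposal is correct and follows essentially the same route as the paper: a piecewise-linear, order-preserving change of variables on $[0,1]$ fixing the symmetry $x \mapsto 1-x$, applied entrywise to matrices and composed with a symmetric, honest tournament map (the paper's $\varphi$ just goes in the opposite direction, from $P$ to $Q$). Your checks of symmetry, honesty and the symmetric interpolation nodes match the paper's argument.
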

\begin{proof}
  Let $B=\{p_{ij} : i, j\in[n]\}$ and $C=\{q_{ij} : i, j\in[n]\}$. As the entries
  of $P$ and $Q$ are ordered in the same way, the sets $B$ and $C$ contain
  the same number of elements. Moreover, as each set contains $\frac{1}{2}$ and
  is invariant under the map $x\mapsto 1-x$, each contains an odd number of
  elements. Let us enumerate these by $b_0 < b_1 < \dots < b_{2k}$ and
  $c_0 < c_1 < \dots < c_{2k}$. Then $b_k=c_k=\frac{1}{2}$ and
  $b_i+b_{2k-i}=c_i+c_{2k-i}=\frac{1}{2}$. We define
  $\varphi:[0, 1]\rightarrow[0, 1]$ to be the unique piecewise-linear
  function satisfying $\varphi(0)=0$, $\varphi(b_i)=c_i$ for
  all $0\leq i \leq 2k$,  $\varphi(1)=1$.
  It follows that $\varphi$ is a continuous
  increasing function such that $\varphi(1-x) = 1-\varphi(x)$ for all
  $x\in[0, 1]$. Hence, by letting $\varphi$ act on $P\in\mathcal{M}_n$
  coordinate-wise, we can consider $\varphi$ as an increasing map from
  $\mathcal{M}_n$ to itself such that $\varphi(P)=Q$.

  Now, for any symmetric and honest tournament map $f$, it follows that
  $f \circ \varphi$ and $f \circ \varphi^{-1}$ are also symmetric and honest
  tournament maps. Moreover $f(Q)=(f\circ\varphi)(P)$ and
  $f(P)=(f\circ\varphi^{-1})(Q)$. Hence the same win vectors are achievable for
  $P$ and $Q$, as desired.
\end{proof}

As for the second step, we want to show that for any fixed $P\in\mathcal{M}_n$, ${A}_n(P)$ is a convex polytope. Given $P$, we define $B_P$ as the set consisting
of $0, 1$ and all values
$p_{ij}$ for $i, j\in[n]$. We define $\mathcal{M}_n(P)$ as the set of all
matrices $Q\in\mathcal{M}_n$ such that $q_{ij}\in B_P$ for all $i, j\in[n]$,
and define a $P$-discrete tournament map as a function from
$\mathcal{M}_n(P)$ to
$\mathcal{P}_n$. We define symmetry and honesty in the same way as for regular
tournament maps. Let ${A}_n'(P)$ be the set of all vectors $f(P)$ for
$P$-discrete, symmetric and honest $n$-player tournament maps.

\begin{proposition}
For any $P\in\mathcal{M}_n$, ${A}'_n(P)$ is a convex polytope.
\end{proposition}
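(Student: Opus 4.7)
The plan is to set up a finite-dimensional ambient vector space in which $P$-discrete tournament maps live, observe that the constraints defining the subset of symmetric and honest maps are finitely many linear (in)equalities, and then realize ${A}'_n(P)$ as the image of the resulting polytope under a linear evaluation map.

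First I would note that $B_P \subseteq [0,1]$ is finite (it contains $0$, $1$, and the at most $n^2$ values $p_{ij}$), hence $\mathcal{M}_n(P) \subseteq B_P^{n\times n}$ is finite. Therefore the space $V$ of all functions $f : \mathcal{M}_n(P) \to \mathbb{R}^n$ is a finite-dimensional real vector space, of dimension $n\cdot|\mathcal{M}_n(P)|$. Membership of $f$ in the set of $P$-discrete maps into $\mathcal{P}_n$ translates to the finitely many linear conditions $f_i(Q)\geq 0$ and $\sum_{i=1}^n f_i(Q) = 1$, for $Q\in\mathcal{M}_n(P)$ and $i\in[n]$.

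Next I would encode the symmetry and honesty constraints as finitely many linear (in)equalities on $V$. For symmetry, note that for any $\sigma\in\mathcal{S}_n$ the matrix $Q^\sigma$ with $(Q^\sigma)_{\sigma(i)\sigma(j)} = q_{ij}$ has the same multiset of entries as $Q$, and still satisfies $q_{ij}+q_{ji}=1$, so $Q^\sigma\in\mathcal{M}_n(P)$. The condition $f_i(Q) = f_{\sigma(i)}(Q^\sigma)$ is therefore a linear equation within $V$, and there are only finitely many such equations indexed by $(\sigma,i,Q)$. For honesty, observe that since $B_P$ is closed under $x\mapsto 1-x$ (because $p_{ji}=1-p_{ij}$ already lies in $B_P$), for any $Q\in\mathcal{M}_n(P)$ and any $i\neq j$ we can replace $q_{ij}$ by any other value $v\in B_P$ and set $q_{ji}:=1-v\in B_P$, staying inside $\mathcal{M}_n(P)$. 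Monotonicity of $f_i$ in the $(i,j)$-entry becomes the finite collection of linear inequalities $f_i(Q')\geq f_i(Q)$, one for each pair $Q,Q'\in\mathcal{M}_n(P)$ that agree outside the $(i,j),(j,i)$ entries and satisfy $q'_{ij}>q_{ij}$.

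Let $W\subseteq V$ be the intersection of the affine subspace cut out by the equalities above with the finitely many closed half-spaces given by the inequalities. Then $W$ is closed and, since each coordinate $f_i(Q)$ lies in $[0,1]$, bounded; hence it is a compact convex polytope. The evaluation map
\[
\mathrm{ev}_P : V \longrightarrow \mathbb{R}^n, \qquad f \longmapsto f(P),
\]
is linear, and by definition ${A}'_n(P) = \mathrm{ev}_P(W)$. Since the image of a compact convex polytope under a linear map is again a compact convex polytope, this finishes the proof.

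The only point that requires any care --- rather than being a genuine obstacle --- is verifying that the symmetric-group action on matrices and the ``one-entry swaps'' used to phrase honesty both preserve $\mathcal{M}_n(P)$; both follow from the elementary closure properties of $B_P$ noted above, and once they are in hand the argument is a direct application of the fact that the linear image of a polytope is a polytope.
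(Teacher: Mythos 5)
Your proposal is correct and follows essentially the same route as the paper's proof: identify $P$-discrete maps with vectors in a finite-dimensional space, note that symmetry, honesty and the $\mathcal{P}_n$-valuedness are finitely many linear equalities and non-strict inequalities cutting out a bounded (hence compact) polytope, and observe that evaluation at $P$ is a linear map whose image is again a polytope. The extra care you take in checking that the $\mathcal{S}_n$-action and the single-entry modifications used to express honesty stay inside $\mathcal{M}_n(P)$ is a detail the paper leaves implicit, but it does not change the argument.
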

\begin{proof}
  As $\mathcal{M}_n(P)$ is a finite set, we can represent any $P$-discrete
  $n$-player tournament map as a vector in a finite-dimensional (more precisely
  $\left(\abs{\mathcal{M}_n(P)}\times n\right)$-dimensional) space. The
  conditions that the map is symmetric and honest can be expressed as a finite
  number of linear equalities and non-strict inequalities to be
  satisfied by this vector. It is also clearly bounded, as it is contained in $\mathcal{M_n}\times \mathcal{P}_n$, which is a bounded set. Hence, the set of $P$-discrete, symmetric and
  honest $n$-player tournament maps form
  a convex polytope. Evaluating a tournament map at $P$ can be
  interpreted as a projection of the corresponding vector, hence
  ${A}_n'(P)$ is a
  linear projection of a convex polytope, which means that it must be a
  convex polytope itself.
\end{proof}

\begin{proposition}
For any $P\in\mathcal{M}_n$, ${A}_n(P)={A}_n'(P)$.
\end{proposition}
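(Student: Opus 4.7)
The plan is to prove the two inclusions separately. The direction $A_n(P) \subseteq A_n'(P)$ is essentially free: given a continuous symmetric and honest $f : \mathcal{M}_n \to \mathcal{P}_n$, its restriction $g := f|_{\mathcal{M}_n(P)}$ is a $P$-discrete tournament map with $g(P) = f(P)$. Symmetry is inherited since the $\mathcal{S}_n$-action permutes $\mathcal{M}_n(P)$ into itself, and discrete honesty of $g$ follows from continuous monotonicity of $f_i$ along the line segment in $\mathcal{M}_n$ joining any two elements of $\mathcal{M}_n(P)$ that differ only in their $\{i,j\}$ coordinate.

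For the reverse inclusion $A_n'(P) \subseteq A_n(P)$, the plan is to extend an arbitrary $P$-discrete symmetric and honest map $g : \mathcal{M}_n(P) \to \mathcal{P}_n$ to a continuous symmetric and honest tournament map $f$ on all of $\mathcal{M}_n$ by multilinear interpolation on the grid $B_P$. Enumerating $B_P = \{b_0 < \dots < b_{2k}\}$, with $b_k = \tfrac{1}{2}$ and $b_i + b_{2k-i} = 1$, for each $Q \in \mathcal{M}_n$ and each unordered pair $\{i,j\}$ I would locate the interval $[b_{s_{ij}},\, b_{s_{ij}+1}]$ containing $q_{ij}$ and set $\lambda_{ij} = (q_{ij}-b_{s_{ij}})/(b_{s_{ij}+1}-b_{s_{ij}}) \in [0,1]$. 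For each rounding assignment $\epsilon : \binom{[n]}{2} \to \{0,1\}$, the corresponding rounded matrix $Q^\epsilon$ lies in $\mathcal{M}_n(P)$ thanks to the $x \mapsto 1-x$ symmetry of $B_P$, and I define $f(Q) := \sum_\epsilon \bigl(\prod_{\{i,j\}} \lambda_{ij}^{\epsilon(\{i,j\})}(1-\lambda_{ij})^{1-\epsilon(\{i,j\})}\bigr) g(Q^\epsilon)$, a convex combination of elements of $\mathcal{P}_n$.

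It then remains to verify four things. First, $f(P) = g(P)$, since every $p_{ij} \in B_P$ forces each $\lambda_{ij} \in \{0,1\}$ and the sum collapses to a single term. Second, $f$ is continuous, because within each cell of the grid it is polynomial in the $q_{ij}$, and at a boundary where some $q_{ij}$ equals a grid point the relevant $\lambda_{ij}$ takes the value $0$ or $1$, so the two adjacent formulas agree. Third, $f$ is symmetric, because permuting $Q$ by $\sigma \in \mathcal{S}_n$ permutes the interpolation data and the matrices $Q^\epsilon$ compatibly, and symmetry of $g$ then transfers to $f$. Fourth, $f$ is honest, because within each cell $\partial f_i/\partial q_{ij}$ is a non-negative combination of discrete differences $g_i(Q^{\epsilon^+}) - g_i(Q^{\epsilon^-})$ between matrices differing only in their $\{i,j\}$ coordinate, each non-negative by honesty of $g$. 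I expect the main technical point to be organizing the interpolation in a manifestly coordinate-free way on unordered pairs, so that the constraint $q_{ij} + q_{ji} = 1$ is preserved automatically and $\mathcal{S}_n$-equivariance is transparent; once the construction is set up correctly on this basis, each of the four checks is routine.
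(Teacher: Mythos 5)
Your proposal is correct and is essentially the paper's own argument: the restriction gives the easy inclusion, and your multilinear interpolation over per-pair roundings is exactly the paper's construction $g(Q)=\mathbb{E}f(\mathbf{R})$, where $\mathbf{R}$ rounds each $q_{ij}$ independently to the adjacent grid values of $B_P$ with the interpolation weights. Your per-cell partial-derivative check of honesty is the deterministic rewriting of the paper's coupling argument, so the two proofs coincide in substance.
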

\begin{proof}
  As the restriction of any symmetric and honest tournament map $f$ to
  $\mathcal{M}_n(P)$ is a symmetric and honest $P$-discrete tournament map, it
  follows  that ${A}_n(P)\subseteq {A}'_n(P)$. To prove that
  ${A}'_n(P)\subseteq {A}_n(P)$, it suffices to show that any
  symmetric and honest $P$-discrete tournament map $f$ can be extended to a
  symmetric and honest (non-discrete) tournament map $g$.

  Given $Q\in\mathcal{M}_n$, we construct a random matrix
  $\mathbf{R}\in\mathcal{M}_n(P)$ as follows: for each pair of players
  $\{i, j\}$, if $q_{ij}$, and thereby also $q_{ji}$ are contained in $A_P$, let
  $\mathbf{r}_{ij}=q_{ij}$ and $\mathbf{r}_{ji}=q_{ji}$.
  Otherwise, write
  $q_{ij} = pa_k+(1-p)a_{k+1}$ for $p\in(0, 1)$ where $a_k, a_{k+1}$ denote
  consecutive elements in $A_P$ and, independently for each such pair of
  players, put $\mathbf{r}_{ij}=a_k$, $\mathbf{r}_{ji}=1-a_k$ with probability
  $p$, and
  $\mathbf{r}_{ij}=a_{k+1}$, $\mathbf{r}_{ji}=1-a_{k+1}$ with probability $1-p$.
  We define $g(Q) = \mathbb{E}f(\mathbf{R})$. This construction is clearly
  continuous and symmetric, and a simple coupling argument shows that $g_i(Q)$
  is increasing in $q_{ij}$, thus $g$ is honest. Moreover, by construction
  $g(P)=f(P)$. Hence ${A}'_n(P)\subseteq {A}_n(P)$, as desired. 
\end{proof}

\section{Futile Tournaments}\label{sect:futile}

Recall the notations $\pi_{i}^{+}$, $\pi_{i}^{-}$ in the definition of honest
tournaments in Section \ref{sect:defi}. In words, they were the probabilities
of $i$ winning the tournament, conditioned on whether $i$ won or lost a given
match and given the results of earlier matches and knowledge of the rules of
the tournament. Honesty was the criterion that $\pi_{i}^{+} \geq \pi_{i}^{-}$
should always hold. We now consider a very special case:
\begin{definition} With notation as above, a tournament is said to be
  \emph{futile} if $\pi_{i}^{+} = \pi_{i}^{-}$ always holds.
\end{definition}
  One natural way to try to reconcile the (arguably) paradoxical fact that
  symmetric and honest tournaments can benefit a worse player over a better
  one is to imagine the winning probability of player $i$ to be divided into
  two contributions. First, the result of matches where player $i$ is involved,
  where, by honesty, a higher ranked player should always be better off.
  Second, the result of matches where $i$ is not involved, where there
  is no immediate reason a player with low rank could not benefit the most.

  Following this intuition, it would make sense to expect the most unfair
  symmetric and honest tournaments to be ones without the first contribution,
  that is, symmetric and futile tournaments. However, as the following result
  shows, this is not the case.

  \begin{proposition}\label{prop:futile}
    If $\bm{T}$ is a symmetric and futile $n$-player tournament, then
    $\pi_1 = \dots = \pi_n = \frac{1}{n}$ in any specialization.
  \end{proposition}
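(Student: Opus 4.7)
The plan is to prove the much stronger statement that $\pi_k(P) = 1/n$ for every $k \in [n]$ and every $P \in \mathcal{M}_n$ whatsoever. The first step is to adapt the ``tournament-on-steroids'' coupling from the proof of Lemma \ref{lem:honesty} to show that $\pi_k(P)$ depends only on the submatrix $P|_{[n]\setminus\{k\}}$. Fix distinct players $k, l \in [n]$ and matrices $P, P' \in \mathcal{M}_n$ that agree outside the entries $(k,l)$ and $(l,k)$; the very same coupling of $\bm{\mathcal{T}}^r$ with $\bm{\mathcal{T}}^{r+1}$ as in that proof shows that $\pi_k^{r+1} - \pi_k^r$ equals the expectation of $(p'_{kl} - p_{kl})(\pi_k^+ - \pi_k^-)$ over states in which match $r+1$ is $\{k, l\}$. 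Futility forces $\pi_k^+ = \pi_k^-$ at every such state, so the telescoping sum vanishes and $\pi_k(P) = \pi_k(P')$. Letting $l$ range over $[n]\setminus\{k\}$ proves the claim.

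Next, the symmetry of $\bm{T}$ together with Lemma \ref{lem:symmetry} yields a single permutation-invariant function $\psi: \mathcal{M}_{n-1} \to [0, 1]$ such that $\pi_k(P) = \psi(P|_{[n]\setminus\{k\}})$ for every $k$ and $P$. Indeed, for any transposition $\sigma = (k\ k')$, the relation $\pi_k(P) = \pi_{k'}(\sigma(P))$ together with the observation that $\sigma(P)|_{[n]\setminus\{k'\}}$ is simply a relabelling of $P|_{[n]\setminus\{k\}}$ identifies the various representing functions, while permutations fixing $k$ give the $\mathcal{S}_{n-1}$-invariance. The identity $\sum_k \pi_k(P) = 1$ now becomes the functional equation
\begin{equation*}
  \sum_{k=1}^{n} \psi\bigl(P|_{[n]\setminus\{k\}}\bigr) = 1 \qquad \text{for every } P \in \mathcal{M}_n.
\end{equation*}

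Finally I would extract $\psi \equiv 1/n$ from this identity. Define $\hat\psi(L)$ for $L \in \mathcal{M}_m$ with $0 \leq m \leq n-1$ by $\hat\psi(L) := \psi(L^{\ast})$, where $L^{\ast}$ denotes $L$ extended by $n-1-m$ all-$\tfrac12$ rows and columns (well-defined by permutation invariance of $\psi$). Specialising the functional equation to matrices $P \in \mathcal{M}_n$ with $P|_{[n]\setminus\{k_0\}} = L^\ast$ and $p_{k_0 j} = \tfrac12$ for all $j \neq k_0$, and splitting the sum according to whether the removed index is $k_0$, one of the $n-1-m$ null indices of $L^\ast$, or one of the $m$ ``$L$-indices'' of $L^\ast$, yields the recursion
\begin{equation*}
  (n - m)\,\hat\psi(L) + \sum_{i=1}^{m} \hat\psi\bigl(L|_{[m]\setminus\{i\}}\bigr) = 1, \qquad L \in \mathcal{M}_m,\ 0 \leq m \leq n-1.
\end{equation*}
Its base case $m = 0$ reads $n \cdot \hat\psi(\emptyset) = 1$, and a routine induction on $m$ then yields $\hat\psi \equiv 1/n$; in particular $\psi \equiv 1/n$ on $\mathcal{M}_{n-1}$, which is the desired conclusion.

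The main obstacle, I expect, is the subtle point that step one alone does \emph{not} imply that $\pi_k(P)$ is independent of all of $P$, since modifying an entry $p_{ab}$ with $a, b \neq k$ can in principle still change $\pi_k$. Symmetry is therefore essential, both to collapse the $\pi_k$ to a single function $\psi$ and to power the null-extension recursion that drives the argument back to the trivial base case $m = 0$, where the answer $\hat\psi(\emptyset) = 1/n$ is forced purely by the sum constraint $\sum_k \pi_k = 1$.
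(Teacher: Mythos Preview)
Your proof is correct and shares its first step with the paper's argument but organizes the induction differently.

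Both proofs begin by establishing that $\pi_k(P)$ does not depend on the entries $p_{kj}$ for $j \neq k$. The paper does this via a reversal trick: since the tournament $\bm{T}^c$ obtained by flipping every match result is also futile, hence honest, $\pi_k$ is simultaneously increasing and decreasing in each $p_{kj}$. Your direct observation---that futility forces the increment $(p'_{kl}-p_{kl})(\pi_k^+-\pi_k^-)$ in the Lemma~\ref{lem:honesty} coupling to vanish identically---is a slightly more elementary route to the same conclusion.

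From there the two arguments diverge. The paper runs a downward induction on the number of ``clones'' of player~$1$ (players $i$ with $p_{ij}=p_{1j}$ for all $j$): given $k-1$ clones, one modifies a non-clone's row and column to make it a clone; step one says this leaves $\pi_i$ unchanged, and the inductive hypothesis then pins $\pi_i$ at $1/n$. Symmetry enters only at the end, to transfer the value $1/n$ from player~$1$ to the clones. You instead package symmetry up front into a single $\mathcal{S}_{n-1}$-invariant function $\psi$, convert $\sum_k\pi_k=1$ into a functional equation, and run an upward induction on the dimension $m$ of the ``essential'' submatrix $L$ via null extensions. The paper's version is more hands-on and uses symmetry locally; yours makes the roles of symmetry and the sum constraint more structural, at the cost of a slightly more abstract setup (the identification of the various $\psi_k$ into one $\psi$ deserves a line or two more care than you give it, but it goes through). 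Both inductions bottom out at the all-$\tfrac12$ matrix.
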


  \begin{proof}
  Since $\bm{T}$ is futile, it is honest and hence, by
  Lemma \ref{lem:honesty}, $\pi_i$ is increasing in $p_{ij}$ at every point
  of $\mathcal{M}_{n}$, for any $i \neq j$. But
      consider the tournament $\bm{T}^c$ which has the same rules as
      $\bm{T}$, but where we reverse the result of every match. Clearly
      this will also be futile, hence honest, and corresponds to
      a change of variables $p_{uv} \mapsto 1 - p_{uv} (= p_{vu})$. Hence,
      for $i \neq j$ it
      also holds that $\pi_i$ is decreasing in $p_{ij}$ at every point of
      $\mathcal{M}_{n}$ and so $\pi_i$ does not depend on $p_{ij}$ for any
      $i\neq j$.
  
      Given a matrix $P\in\mathcal{M}_n$, we say that a player $i>1$ is a
      \emph{clone} of player $1$ if $p_{1j}=p_{ij}$ for all $j\in [n]$. Clearly,
      $\pi_1=\pi_2=\dots =\pi_n = \frac{1}{n}$ for any matrix $P$ with $n-1$
      clones of player $1$. We show by induction that the same equality holds
      for any number of clones.

      Assume $\pi_1=\pi_2=\dots =\pi_n = \frac{1}{n}$ whenever
      $P\in\mathcal{M}_n$
      contains $k\geq 1$ clones of player $1$. Let $P\in\mathcal{M}_n$ be a
      matrix that contains $k-1$ such clones. For any player $i> 1$ that is not
      a clone, we can make it into one by modifying the entries in the
      $i$:th row and column of $P$ appropriately. By
      futility, $\pi_i$ does not depend on these entries, but by the induction
      hypothesis, $i$ gets winning probability $\frac{1}{n}$ after the
      modification. Hence any $i>1$ that is not a clone of player $1$ has
      winning probabiltity $\frac{1}{n}$. By symmetry, any clone must have the
      same winning probability as player $1$, which means that these also must
      have winning probability $\frac{1}{n}$.
\end{proof}

\section{Final Remarks}\label{sect:final}

In this paper we have taken a well-established mathematical model for
tournaments - whose key ingredient is the assumption of fixed probabilities
$p_{ij}$ for player $i$ beating $j$ in a single match - and introduced and
rigorously defined three new concepts: symmetry, honesty and fairness. Our
main insight is that it is possible for a tournament to be symmetric and
(strictly) honest, yet unfair. We'd like to finish here with some remarks
on the concepts themselves.
\par Symmetry seems to us a rather uncontroversial idea. It is of course true
that, in practice, many tournaments have special arrangements which break
symmetry in a myriad of ways. Hoewever, if one wishes to develop some
general mathematical theory,
it seems like a natural restriction to impose at the beginning.
\par Turning to honesty, the fact that ``it takes effort to try and actually win
a match'' suggests that it would be more realistic to demand that the
differences $\pi_{i}^{+} - \pi_{i}^{-}$ are bounded away from zero somehow. The
same fact indicates that a more realistic model should incorporate the
possibility of there being intrinsic value for a player in trying
to minimize the total number of matches he expects to play in the tournament.
This basically involves abandoning the assumption that the $p_{ij}$ are
constant. Incorporating ``effort expended'' into our framework is therefore
clearly a non-trivial task, which we leave for future investigation.
\par Thirdly, we turn to fairness.
Various alternative notions of ``fairness'' can already be
gleaned from the existing literature. Basically, however, there are two
opposite directions from which one might criticize our definition:
\begin{itemize}
\item On the one hand, one might say we are too restrictive in only
  concentrating on the probabilities of actually winning the tournament.
  In practice, many tournaments end up with a partial ordering of the
  participants (though usually with a single maximal element), and rewards
  in the form of money, ranking points etc. are distributed according to one's
  position in this ordering. Hence, instead of defining fairness in terms
  of winning probability, one could do so in terms of expected depth
  in the final partial ordering, or some other proxy for expected reward. This
  is another possibility for future work.
\item At the other end of the spectrum, one could suggest that a fair
  tournament should not just give the best player the highest probability of
  winning, but that this probability should be close to one. There are a
  number of important papers in the literature which take this point of view,
  see for example \cite{Ben}, \cite{Bra} and \cite{Fei}. These authors
  are concerned with a different kind of question than us, namely how efficient
  (in terms of expected total number of matches played) can one make
  the tournament while ensuring that the best player wins with high
  probability ? There are elegant, rigorous results for the special case of
  the model in which $p_{ij} = p$ for all $i < j$, and some fixed
  $p \in (0, \, 1]$. Moreover, as the papers \cite{Bra} and \cite{Fei} show,
    this kind of question has applications far beyond the world of
    sports tournaments. In this regard, see also \cite{BT}, where the focus
    is more on efficiently producing a correct ranking of \emph{all}
    participants with high probability.
\end{itemize}
$\;$ \par
Since our main result is a ``negative'' one, it seems
reasonable to ask
whether there is something stronger than honesty, but still a natural
condition, which if imposed on a tournament ensures fairness, in the sense
we defined it. Of course, Schwenk's paper already gives \emph{some} kind of
positive answer: the simplest way to ensure honesty is by having
single-elimination and his method of Cohort Randomized Seeding (CRS) introduces
just sufficient randomness to ensure fairness. Note that,
since a partial seeding remains, his tournaments are not symmetric. Our
question is whether there is a natural condition which encompasses a
significantly wider range of symmetric tournaments. 
\\
\par An alternative viewpoint is to ask for ``more realistic'' examples
of tournaments which are symmetric and honest but unfair. It may be
surprising at first glance that the tournaments $\bm{T}_1$ and
$\bm{T}_2$ in Section \ref{sect:threeplayer}
are indeed unfair, but it is probably not going out on a limb to guess that
no major sports competition is ever likely to adopt those formats. This
is even more the case with the tournaments in Section \ref{sect:nplayer}, which
have the feeling of being ``rigged'' to achieve just the desired outcome.
\par As noted in Section \ref{sect:intro}, there are at least two commonly
occurring examples of symmetric and honest (and fair) tournaments:
\par - round-robin, with ties broken uniformly at random, 
\par - single-elimination with uniformly randomized seeding.\\
\noindent On the other hand, the popular two-phase format of first playing round-robin in order to rank the players for a knock-out tournament using standard seeding is symmetric but not necessarily honest (or fair). Here it's worth noting that a two-phase tournament consisting of round-robin followed by CRS single-elimination, while symmetric, need not be honest either. Suppose we have $2^k$ players, for some large $k$, all but one of whom are clones (see the proof of Proposition \ref{prop:futile}), while the last player is much worse than the clones. Suppose that, before the last round of matches in the round-robin phase, the poor player has defied the odds and won all of his $2^k - 2$ matches to date, while nobody else has won significantly more than $2^{k-1}$ matches. In that case, the poor player is guaranteed to be in the highest cohort, so it is in the interest of every clone to end up in as low a cohort as possible, as this will increase their chances of meeting the poor player in the second phase, i.e.: of having at least one easy match in that phase. In particular, it will be in the interest of a clone to lose their last round-robin match.
\par If we employ uniform randomization in the knockout phase, then the round-robin phase serves no purpose whatsoever. We do not know if there is any other randomization procedure for single-elimination which, combined with round-robin, still yields a symmetric and honest tournament.
\\
\par These observations suggest that finding ``realistic'' examples of symmetric and honest, but unfair tournaments may not be easy. Then again, sports tournaments, or even \emph{tournaments} as defined in this paper, represent a very narrow class of what are usually called ``games''. As mentioned in Section \ref{sect:intro}, a \emph{truel} could be considered as another type of game which is symmetric and honest, yet unfair (in particular, it is possible to define those terms precisely in that context). As a final speculation, we can ask whether the ``real world'' provides any examples of phenomena analogous to those considered in this paper? A social scientist might use a term like ``equal treatment'' instead of ``symmetry'', so we are asking whether the real world provides examples of situations where participants are treated equally, there is no incentive for anyone to cheat, and yet the outcome is unfair (on average). 

\vspace*{1cm}

\section*{Acknowledgements}

We thank Jan Lennartsson, Allan Schwenk and Johan W\"{a}stlund for helpful
discussions.

\vspace*{1cm}



\begin{thebibliography}{} 

\bibitem[1]{Ben} Ben-Naim, E.; Hengartner, N.W.; Redner, S.; Vazquez, F.
  \emph{Randomness in competitions}. J. Stat. Phys. \textbf{151} (2013),
  no. 3-4, 458--474.

\bibitem[2]{BT} Bradley, Ralph Allan; Terry, Milton E.
  \emph{Rank analysis of incomplete block designs. I. The method of paired
    comparisons}. Biometrika \textbf{39} (1952), 324--345.

\bibitem[3]{Bra} Braverman, Mark; Mossel, Elchanan \emph{Noisy sorting
  without resampling}. Proceedings of the Nineteenth Annual ACM-SIAM Symposium
  on Discrete Algorithms, 268--276, ACM, New York, 2008.

\bibitem[4]{Fei} Feige, U.; Peleg, D.; Raghavan, P.; Upfal, E.
  \emph{Computing with unreliable information.} Proceedings of the
  Twenty-Second Annual ACM Symposium on the Theory of Computing, 128--137,
  ACM, New York, 1990.

\bibitem[5]{Sch} Schwenk, Allen J. \emph{What is the correct way to seed a
  knockout tournament ?}. Amer. Math. Monthly \textbf{107} (2000), no. 2,
  140--150.

\end{thebibliography}
\end{document}